\documentclass[11pt]{amsart}
\usepackage[dvipsnames]{xcolor}
\usepackage{amsfonts,amssymb,amsmath,amscd,amstext}
\usepackage{mathrsfs}
\usepackage[colorlinks=true,linkcolor=teal,citecolor=purple,urlcolor=Plum]{hyperref}
\usepackage[utf8]{inputenc}
\usepackage[final]{graphicx}
\usepackage{aligned-overset}
\usepackage{float}
\usepackage{comment}
\usepackage[noabbrev,capitalize,nameinlink]{cleveref}
\usepackage{mathdots}
\usepackage[a4paper,top=2.3cm,bottom=2.3cm,left=1.4cm,right=1.4cm]{geometry}

\renewcommand{\geq}{\geqslant}

\newcommand{\hhh}{{\mathcal{H}}}
\newcommand{\hn}{\mathbb{H}^n}
\newcommand{\A}{\mathbf{A}}
\newcommand{\B}{\mathbf{B}}
\newcommand{\C}{\mathbf{C}}
\newcommand{\D}{\mathbf{D}}

\newcommand{\n}[1]{\nabla^\varepsilon_{ #1 }}
\newcommand{\tmc}{{\mathscr{H}}}

\newcommand{\vh}{\nu}

\newcommand{\rr}{{\mathbb{R}}}

\newcommand{\hh}{{\mathbb{H}}}

\newcommand{\eps}{\varepsilon}

\newcommand{\s}{\mathcal S}

\renewcommand{\j}{\mathcal{C}}

\newcommand{\W}{\mathbf Y}
\newcommand{\Y}{\mathcal X}

\newcommand{\N}{\mathcal N}

\newcommand{\E}{\mathbf{E}}
\newcommand{\X}{\mathbf{X}}

\newcommand{\Z}{\mathbf{Z}}
\renewcommand{\n}{\mathbf{N}}
\newcommand{\G}{\mathcal G}

\newcommand{\average}{{\mathchoice {\kern1ex\vcenter{\hrule height.4pt
				width 6pt
				depth0pt} \kern-9.7pt} {\kern1ex\vcenter{\hrule height.4pt width 4.3pt
				depth0pt}
			\kern-7pt} {} {} }}

\definecolor{champagne}{rgb}{0.97, 0.91, 0.81}

\definecolor{asparagus}{rgb}{0.53, 0.66, 0.42}
\DeclareMathOperator{\ric}{Ric}

\DeclareMathOperator{\divv}{div}

\DeclareMathOperator{\trace}{trace}

\DeclareMathOperator{\hess}{Hess}
\newcommand{\jacobi}{\mathcal{J}}

\DeclareMathOperator{\supp}{supp}
\DeclareMathOperator{\spann}{span}
\DeclareMathOperator{\jac}{Jac}

\DeclareMathOperator{\R}{R}
\DeclareMathOperator{\sym}{sym}

\newtheorem{theorem}{Theorem}[section]
\newtheorem{proposition}[theorem]{Proposition}

\newtheorem{lemma}[theorem]{Lemma}
\newtheorem{corollary}[theorem]{Corollary}
\theoremstyle{definition}

\theoremstyle{remark}

\numberwithin{equation}{section}

\author[S.~Verzellesi]{Simone Verzellesi}
\address[S.~Verzellesi]{Dipartimento di Matematica ``Tullio Levi-Civita'', Università degli Studi di Padova, via Trieste 63, 35131 Padova (PD), Italy}
\email{simone DOT verzellesi AT unipd DOT it}

\title[Variation formulas for mean curvature functionals]{Variation formulas for mean curvature functionals}

\date{\today}

\subjclass{53C42, 49Q10, 53C17}
\keywords{Variation formulas, mean curvature, Riemannian manifolds, Heisenberg group}
\thanks{\textit{Acknowledgments.} S. Verzellesi thanks R. Monti, J. Pozuelo, M. Ritoré and D. Vittone for fruitful discussion about the addressed topics.
\textit{Memberships and funding information.} S. Verzellesi is member of the Istituto Nazionale di Alta Matematica (INdAM), Gruppo Nazionale per l'Analisi Matematica, la Probabilità e le loro Applicazioni (GNAMPA). 
S.~Verzellesi is supported by the University of Padova. S.~Verzellesi received funding through INdAM-GNAMPA 2026 Project \emph{Variational, Geometric, and Analytic Perspectives on Regularity}, CUP E53C25002010001}

\begin{document}
\begin{abstract}

We establish first and second variation formulas for general mean curvature functionals under arbitrary variations in arbitrary Riemannian manifolds. We obtain corresponding formulas in the sub-Riemannian Heisenberg group through a Riemannian approximation scheme.
\end{abstract}
\maketitle
\section{Introduction}

Let $M$ be a Riemannian manifold and let $S\subseteq M$ be a smooth, embedded, closed, two-sided hypersurface. In this paper, we consider geometric functionals of the form
\begin{equation}
    \label{intro_riemfun}
\mathcal H_f(S)=\int_S f(H)\,d S,
\end{equation}
where $H$ denotes the mean curvature of $S$ and $f:\mathbb R\to\mathbb R$ is a smooth function. Our main purpose is to establish first and second variation formulas of $\mathcal H_f$ under arbitrary smooth variations. We refer to \Cref{maintheoremappendixA} for a summary of the main outcomes in this regard.

\medskip
Under additional constraints on the ambient manifold, on the class of variations or on the curvature functional, such variation formulas are well-known and appear in several different contexts. When $f\equiv 1$, \eqref{intro_riemfun} boils down to the area functional, for which variation formulas are available in great generality \cite{MR4676392}. Other natural choices of $f$ specialize \eqref{intro_riemfun} to relevant curvature energies, such as the \emph{Willmore energy}, the \emph{total mean curvature} and the \emph{total inverse mean curvature}. Variation formulas for these and more general curvature functionals have typically been addressed within very specific frameworks, for instance allowing for restricted classes of variations \cite{MR3962031,MR4927775}, or imposing additional geometric constraints on the ambient manifold \cite{MR3666332,MR3962031,MR4927775,MR341351}. However, to the best of our knowledge, a general formula simultaneously allowing an arbitrary mean curvature functional, arbitrary variations, and an arbitrary ambient manifold does not seem to be available. Our first purpose is to provide such a formula, both at first and (most importantly) at second order.

\medskip
Variation formulas are natural tools when addressing geometric variational problems. The first variation identifies the corresponding Euler-Lagrange equation, while the second variation provides the relevant stability information. Besides the obvious role played by such variational tools in connection with the isoperimetric problem \cite{MR86338,MR0731682,MR0917854,MR3921314}, more general mean curvature functionals arise e.g. in connection with the establishment of sharp geometric inequalities of Minkowski \cite{MR3544938,MR2522433,MR1511220}, Willmore \cite{MR4037467,MR4941125,MR291994,MR278240}, and Heintze-Karcher \cite{MR533065, MR1173047,MR996826,MR3702549} type. Accordingly, general variation formulas provide a unified framework for these problems, while allowing the resulting identities to be adapted to the specific geometry under consideration.

\medskip
The derivation of the variation formulas, inspired by the approach proposed in \cite{MR4676392} and carried out in \Cref{appendiceriem}, is based on a direct analysis of the pointwise evolution of the relevant geometric quantities. After introducing the basic preliminaries about hypersurfaces and variations, we establish a computational lemma for iterated covariant derivatives, which provides the main technical tool for the subsequent calculations. We then describe how the basic geometric quantities associated with the evolving hypersurface change along the variation. More precisely, we derive first and second order formulas for the induced Jacobian, the unit normal, the second fundamental form, and the mean curvature. While the corresponding first-order identities are well-known, their second-order counterparts, in the general setting considered here, constitute the main computational step of the argument. These pointwise identities are then combined with the area formula and integrated over the reference hypersurface, leading to the first and second variation formulas for \eqref{intro_riemfun}. Throughout the argument, no assumptions are imposed on the curvature of the ambient manifold, and the variation is allowed to have both tangential and normal components.

\medskip
As a second main result, we exploit the Riemannian machinery to establish corresponding \emph{sub-Riemannian} variation formulas. Namely, we focus on the sub-Riemannian  \emph{Heisenberg group} $\mathbb H^n$, a suitable stratified Lie group which provides the prototypical model within sub-Riemannian \cite{MR3971262,MR2363343} and \emph{pseudohermitian} \cite{MR2165405} geometries. In this framework, a general mean curvature functional takes the form 
\begin{equation}\label{tmcfunsubriemderfgt5ryhythyh}
    \tmc^\hhh_f(S)=\int_S f\left(H^\hhh\right)\,d\sigma^\hhh,
\end{equation} 
where $S$ is an embedded, closed hypersurface, $H^\hhh$ is its \emph{horizontal mean curvature} and $\sigma^\hhh$ is its \emph{sub-Riemannian surface measure}. We refer to \Cref{appendicesubriem} for the related definitions, and to \Cref{teoremavariazionigeneralimainsubriem} for the statement of the variation formulas.

\medskip
Typically, sub-Riemannian geometry is determined by a distinguished distribution of admissible directions, the \emph{horizontal distribution} $\hhh$, endowed with a metric defined \emph{a priori} only along such directions. Accordingly, both intrinsic and extrinsic geometric quantities are built from such horizontal geometry. The horizontal geometry of a hypersurface may collapse when the horizontal distribution coincides with its tangent space, i.e. in the presence of \emph{characteristic points}. Nevertheless, a sub-Riemannian structure can be \emph{a posteriori} approximated by a suitable family of Riemannian metrics whose associated geometries converge, in a suitable sense, to the sub-Riemannian one.
Our approach to the sub-Riemannian problem is indeed inspired by the above construction. Namely, following a well-established approach \cite{MR2774306,MR2262784,MR4761954,MR2043961,MR5029815}, we approximate the sub-Riemannian structure $\left(\hh^n,\hhh,\langle\cdot,\cdot\rangle\right)$ by means of a sequence of collapsing Riemannian structures $$\big(\hh^n,\langle\cdot,\cdot\rangle_\eps\big),\qquad \eps>0.$$
For each approximating metric, the Riemannian variation formulas apply directly. The sub-Riemannian formulas are then obtained by a careful limiting argument. To this aim, we restrict to consider variations which do not move characteristic points. This procedure requires a detailed analysis of the convergence of the geometric quantities entering the Riemannian formulas. We therefore study both ambient objects, such as the curvature tensors of the approximating metrics, and quantities associated with the hypersurface, in particular those built from its second fundamental form. A notable feature of the approximation is that, although some of these quantities may diverge individually as $\varepsilon\to 0$, the specific combinations in which they enter the variation formulas remain convergent through appropriate cancellations. This phenomenon is well known in the Riemannian approximation of first and second variations of area \cite{MR2723818,MR5029815}. Here we show that it persists for the more involved quantities arising from the second variation of the mean curvature.
\section{Variation formulas for Riemannian mean curvature functionals}\label{appendiceriem}
In this section, we establish first and second variation formulas for \eqref{intro_riemfun}. Much of the notation and several results are borrowed from \cite{MR4676392}. We refer to \cite{MR1138207} for a general account of Riemannian geometry.
\subsection{Preliminaries} 
Here and hereafter, $M$ is a fixed $(n+1)$-dimensional Riemannian manifold for some $n\geq 1$, $\langle\cdot,\cdot\rangle$ is its Riemannian metric and $\nabla$ is its Levi-Civita connection.
In the following, Einstein's summation convention is assumed.
Denote by $\R$ both the $(3,1)$ and the $(4,0)$ Riemann tensor, namely
\begin{equation*}
    \R(\A ,\B )\C =\nabla_{\A}\nabla_{\B} \C-\nabla_{\B}\nabla_{\A} \C-\nabla_{[ \A,\B ]}\C,\quad \R(\A,\B,\C,\D)=\left\langle \R(\A,\B)\C,\D\right\rangle,\quad \A,\B,\C,\D\in\Gamma(TM).
\end{equation*}
If $\A,\C\in\Gamma(TM)$ are fixed, denote by $\j$ the $(2,0)$-tensor field defined by 
\begin{equation*}
   \j(\A ,\C)(\B,\D)\coloneqq\R(\A ,\B ,\C,\D),\qquad \B,\D\in\Gamma(TM).
\end{equation*}
We recall that, if $ \A,\B ,\C,\D \in\Gamma(TM)$, then
    \begin{align*}
        \left(\nabla_{\A}\ric\right)(\B,\C )=-\trace\left(\nabla_{\A} \R\right)(\B,\cdot,\C ,\cdot),\qquad
        \left(\nabla_{\A}\R\right)(\B,\C,\D,\D)=0.
    \end{align*}
 Let $p\in M$. Let $x_1,\ldots,x_{n+1}$ be local coordinates in $M$ near $p$.
 Write 
 \begin{equation*}
     \R\left(\frac{\partial }{\partial x_i},\frac{\partial}{\partial x_j}\right)\frac{\partial }{\partial x_k}=\R_{ijk}^l\frac{\partial}{\partial x_l},\qquad  \left(\nabla_{\frac{\partial}{\partial x_\alpha}}\R\right)\left(\frac{\partial }{\partial x_i},\frac{\partial}{\partial x_j}\right)\frac{\partial }{\partial x_k}=\left(\nabla \R\right)_{\alpha i j k}^l\frac{\partial}{\partial x_l},\qquad\alpha,i,j,k=1,\ldots,n+1.
 \end{equation*}
 Notice that 
 \begin{equation}\label{riemanincoordinate}
     \R_{ijk}^l=\frac{\partial\Gamma_{jk}^l}{\partial x_i}-\frac{\partial \Gamma_{ik}^l}{\partial x_j}+\Gamma_{jk}^m\Gamma_{im}^l-\Gamma_{ik}^m\Gamma_{jm}^l,\qquad i,j,k,l=1,\ldots,n+1,
 \end{equation}
where $\Gamma^k_{ij}$ are the Christoffel symbols. In particular, when $x_1,\ldots,x_{n+1}$ are normal coordinates centered at $p$,
\begin{equation}\label{normcoordcristo}
   \Gamma_{ij}^{k}(p)=0\qquad\text{for any $i,j,k=1,\ldots,n+1$,} 
\end{equation}
 so that 
\begin{equation}\label{rieminnorm}
    \R_{ijk}^l(p)=\frac{\partial \Gamma_{jk}^l}{\partial x_i}(p)-\frac{\partial \Gamma_{ik}^l}{\partial x_j}(p)\qquad\text{for any $i,j,k,l=1,\ldots,n+1$}
\end{equation}
and 
\begin{equation}\label{deririeminnorm}
    \left(\nabla \R\right)_{\alpha ijk}^l(p)=\frac{\partial ^2\Gamma_{jk}^l}{\partial x_\alpha\partial x_i}(p)-\frac{\partial^2 \Gamma_{ik}^l}{\partial x_\alpha\partial x_j}(p)\qquad\text{for any $\alpha,i,j,k,l=1,\ldots,n+1$}.
\end{equation}
    \subsection{Hypersurfaces}
    Throughout this section, $S\subseteq M$ is a smooth, embedded, closed, two-sided hypersurface. Denote by $\n$ a (globally defined) unit normal to $S$. If $\A\in\Gamma(TM)$, denote by $\A^T$ its orthogonal projection onto $TS$. Denote by $A$ and $h$ the shape operator and the second fundamental form, i.e.
    \begin{equation*}
        A\left(\A\right)=\nabla_\A\n,\qquad h(\A,\B)=\left\langle A(\A),\B\right\rangle,\qquad\A,\B\in\Gamma(TS).    \end{equation*}
    Denote by $H$ the (non-averaged) mean curvature of $S$, and by $\nabla^S$ the Levi-Civita connection of $S$. If $B$ is a $(2,0)$-tensor field on $S$, set   \begin{equation*}
       B^t(\A,\B)\coloneqq B(\B,\A),\qquad \langle L_B(\A),\B\rangle=B(\A,\B),\qquad \A,\B\in\Gamma(TS).
    \end{equation*} 
    For any $k\in\mathbb N_+$, define 
\begin{equation*}
    B^k(\A,\B)=\left\langle L_B^k(\A),\B\right\rangle,\qquad \A,\B\in \Gamma(TS).
\end{equation*}
If $B$ is symmetric, then $B^k$ is symmetric. If $p\in S$ and $e_1,\ldots,e_n$ is an orthonormal basis of $T_p S$, we write
\begin{equation*}\label{atok}
   \left(B^k\right)_{ij}\coloneqq B^k(e_i,e_j)=\sum_{l_1,\ldots,l_{k-1}=1}^nB_{il_1}B_{l_!l_2}\cdots B_{l_{k-1}l_{k-1}}B_{l_{k-1}j},\qquad i,j=1,\ldots, n.
    \end{equation*}
 If $\A,\B,\C\in\Gamma(TS)$,
 the \emph{traced Codazzi equation} reads as  
     \begin{align}
            \left(\divv^S h\right)(\A)&=\A H+\ric(\A,\n)\label{tracedcodazzi}
        \end{align}
        We will make use of the following consequence of the traced Codazzi equation \eqref{tracedcodazzi}.
    \begin{lemma}
        Let $\A\in\Gamma(TM)$. Then
        \begin{equation}\label{corollarioditracedcodazzi}
            \left\langle h,\nabla^S\A\right\rangle+\ric(\A,\n)=\divv^S A\left(\A^T\right)-\A^T H+\left\langle\A,\n\right\rangle\left(|h|^2+\ric(\n,\n)\right).
        \end{equation}
            \end{lemma}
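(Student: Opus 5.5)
Decompose $\A=\A^T+\varphi\n$ with $\varphi=\langle\A,\n\rangle$ and compute both sides of \eqref{corollarioditracedcodazzi} pointwise, at a point $p\in S$, using a local orthonormal frame $e_1,\ldots,e_n$ of $TS$ near $p$ which is geodesic for $\nabla^S$ at $p$. Here $\nabla^S\A$ denotes the tangential part of $\nabla\A$ restricted to $TS$, so $\langle h,\nabla^S\A\rangle=h_{ij}\langle\nabla_{e_i}\A,e_j\rangle$. By linearity of both sides in $\A$, I will treat the tangential part $\A^T$ and the normal part $\varphi\n$ separately.

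\emph{Normal part.} We have $\nabla_{e_i}(\varphi\n)=(e_i\varphi)\n+\varphi A(e_i)$, whose tangential component is $\varphi A(e_i)$. Hence $\langle h,\nabla^S(\varphi\n)\rangle=\varphi h_{ij}h_{ij}=\varphi|h|^2$ and $\ric(\varphi\n,\n)=\varphi\ric(\n,\n)$. On the right-hand side, $(\varphi\n)^T=0$, so only the term $\varphi(|h|^2+\ric(\n,\n))$ survives. The two sides therefore agree for the normal part.

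\emph{Tangential part.} Set $\X=\A^T\in\Gamma(TS)$. Then $\langle h,\nabla^S\X\rangle=h_{ij}\langle\nabla^S_{e_i}\X,e_j\rangle$. Using the symmetry of $h$,
\begin{equation*}
\divv^S A(\X)=e_i\bigl(h(\X,e_i)\bigr)=(\nabla^S_{e_i}h)(\X,e_i)+h(\nabla^S_{e_i}\X,e_i)=(\divv^S h)(\X)+\langle h,\nabla^S\X\rangle .
\end{equation*}
By the traced Codazzi equation \eqref{tracedcodazzi}, $(\divv^S h)(\X)=\X H+\ric(\X,\n)$. Substituting gives
\begin{equation*}
\langle h,\nabla^S\X\rangle+\ric(\X,\n)=\divv^S A(\X)-\X H,
\end{equation*}
which is exactly the tangential contribution to \eqref{corollarioditracedcodazzi}. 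Adding the normal contribution yields the claim.

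The only delicate point is sign and convention bookkeeping. One must check that $\nabla^S\A$ for an ambient field is interpreted via tangential projection, and that the index order in $\langle h,\nabla^S\A\rangle$ is irrelevant; both hold because $h$ is symmetric. One must also confirm that the convention $A(\A)=\nabla_\A\n$ gives $+\varphi|h|^2$ rather than $-\varphi|h|^2$, which holds with this definition of $A$.
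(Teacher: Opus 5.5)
Your proof is correct and follows essentially the same route as the paper: a geodesic frame at $p$, the Leibniz rule giving $\divv^S A(\A^T)=(\divv^S h)(\A^T)+\langle h,\nabla^S\A^T\rangle$, and the traced Codazzi equation \eqref{tracedcodazzi}. The only difference is cosmetic. You split $\A=\A^T+\varphi\n$ up front, whereas the paper differentiates $\langle\A,\E_j\rangle$ directly; there the term $\langle\A,\n\rangle|h|^2$ comes out of $\langle\A,\nabla_{\E_i}\E_j\rangle$ through $\langle\n,\nabla_{\E_i}\E_j\rangle=-h(\E_i,\E_j)$.
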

        \begin{proof}
            Let $p\in S$. Let $\E_1,\ldots,\E_n$ be a geodesic frame of $S$ at $p$. Then
      \begin{equation*}              \begin{split}                 \left\langle h,\nabla^S\A\right\rangle
      &=\sum_{i,j=1}^n h(\E_i,\E_j)\E_i\left\langle\A,\E_j\right\rangle-\sum_{i,j=1}^n h(\E_i,\E_j)\left\langle\A,\nabla_{\E_i}\E_j\right\rangle\\
      &=\sum_{i,j=1}^n \E_i\left(h(\E_i,\E_j)\left\langle\A,\E_j\right\rangle\right)-\sum_{i,j=1}^n\E_i\left(h(\E_i,\E_j)\right)\left\langle\A,\E_j\right\rangle-\left\langle\A,\n\right\rangle\sum_{i,j=1}^n h(\E_i,\E_j)\left\langle\n,\nabla_{\E_i}\E_j\right\rangle\\
      &=\sum_{i=1}^n \E_i\left(h\left(\E_i,\A^T\right)\right)-\sum_{i,j=1}^n\left(\nabla^S_{\E_i}h\right)(\E_i,\E_j)\left\langle\A,\E_j\right\rangle+\left\langle\A,\n\right\rangle|h|^2\\
      &=\divv^S A\left(\A^T\right)-\left(\divv^S h\right)\left(\A^T\right)+\left\langle\A,\n\right\rangle|h|^2.
             \end{split}
      \end{equation*}  
      Therefore
      \begin{equation*}
          \begin{split}
               \left\langle h,\nabla^S\A\right\rangle+\ric(\A,\n)&= \divv^S A\left(\A^T\right)-\left(\divv^S h\right)\left(\A^T\right)+\ric\left(\A^T,\n\right)+\left\langle\A,\n\right\rangle\left(|h|^2+\ric(\n,\n)\right)\\
               \overset{\eqref{tracedcodazzi}}&{=}\divv^S A\left(\A^T\right)-\A^T H+\left\langle\A,\n\right\rangle\left(|h|^2+\ric(\n,\n)\right).
          \end{split}
      \end{equation*}
      \end{proof}
  \subsection{Variations}\label{subsec_variations}
    A variation is a smooth map $\Phi:I\times M\to M$, where $I\subseteq\rr$ is any open neighborhood of $0$, such that:
\begin{itemize}
    \item $p\mapsto \Phi(t,p)$ is a diffeomorphism for any $t\in I$;
    \item $\Phi(0,p)=p$ for any $p\in M$. 
\end{itemize}
We adopt the notation $\Phi_t(p)\coloneqq\Phi(t,p)$. A variation is \emph{compactly supported} if $\Phi_t(p)=p$ outside a compact set $K(\Phi)$. In the following, we tacitly assume that a variation is compactly supported. This is clearly not restrictive when computing variations of a closed hypersurface. Nevertheless, closedness is assumed just for the sake of simplicity: the forthcoming variation formulas would clearly hold as well provided the variation is supported far from the boundary of the reference hypersurface. Define the time-dependent vector field $\Y$ by
\begin{equation}\label{def_xechis}
    \Y(t,q)=\left.\frac{\partial}{\partial s}\right|_{s=0}\left(\Phi_{s+t}\circ\Phi_t^{-1}\right)(q)\qquad\text{for any $t\in I$ and $q\in M$.}
\end{equation}
Define
\begin{equation*}
    \X(q)=\Y(0,q),\qquad \X'(q)=\left.\frac{\partial }{\partial t}\right|_{t=0}\Y(t,q),\qquad\Z(q)=\left(\X'+\nabla_\X\X\right)(q)\qquad\text{for any $q\in M$.}
\end{equation*}
The vector fields $\X$ and $\Z$ are known as \emph{variational velocity field} and \emph{variational acceleration field} of $\Phi$. We point out that, while the velocity depends only on the underlying differential structure, the acceleration does depend on the metric via the connection term $\nabla_\X\X$. 
Fix $\varphi\in C^\infty(S)$ and $ \X^T\in\Gamma(TS)$ such that 
\begin{equation*}
    \X|_S=\ \X^T+\varphi \n.
\end{equation*}
When $X^T\equiv 0$, $\Phi$ is called \emph{normal variation}. 
     Set $S_t\coloneqq \Phi_t(S)$ for any $t\in I.$ If $p\in S$ is fixed, set $\beta_p(t)\coloneqq \Phi_t(p)$ for every $t\in I$. If there is no ambiguity, we write $\beta=\beta_p$. By definition, $\beta(t)\in S_t$ for every $t\in I$. Moreover,

\begin{equation*}\label{xtempodefinisceflusso}
    \begin{split}
        \Y(t,\beta(t))=\Y(t,\Phi_t(p))=\left.\frac{\partial}{\partial s}\right|_{s=0}\Phi_{s+t}(p)=\left.\frac{\partial}{\partial s}\right|_{s=t}\Phi_{s}(p)=\dot\beta(t)\qquad\text{for any $t\in I$}.
    \end{split}
\end{equation*}
Therefore, $\Phi$ is the (time-dependent) flow of $\Y$. 
For any $t\in I$, denote by $\N(t,\cdot)$ a smooth choice of arbitrary local extensions, around $S_t$, of unit normals to $S_t$, in such a way that $\N(0,q)=\n(q)$ locally around $S$. Moreover, if $p\in S$, denote by $H_t\left(\Phi_t(p)\right)$ the mean curvature of $S_t$ at $\Phi_t(p)$. When $p\in S$ is fixed and local coordinates $x_1,\ldots,x_{n+1}$ around $p$ are given, we will write 
  \begin{equation*}
        \Y(t,q)=a^i(t,q)\left.\frac{\partial}{\partial x_i}\right|_{q},\qquad \N(t,q)=c^k(t,q)\left.\frac{\partial}{\partial x_k}\right|_{q}
    \end{equation*}
    and \begin{equation*}
        \X(q)=X^i(q)\left.\frac{\partial}{\partial x_i}\right|_{q},\qquad \X'(q)=\left(X'\right)^i(q)\left.\frac{\partial}{\partial x_i}\right|_{q},\qquad \n(q)=N^i(q)\left.\frac{\partial}{\partial x_i}\right|_{q}.
    \end{equation*}
    Notice that, by definition,
    \begin{equation*}
        X^i(q)=a^i(0,q),\qquad(X')^i(q)=\left.\frac{\partial }{\partial t}\right|_{t=0}a^i(t,q),\qquad N^i(q)=c^i(0,q).
    \end{equation*}
    We denote by $\frac{D}{dt}$ the covariant derivative operator, along $\beta$, induced by $\nabla$. We recall that
\begin{equation}\label{comefarecovdev}
    \frac{D}{dt}\left(\left.f^j(t)\frac{\partial}{\partial x_j}\right|_{\beta(t)}\right)=\dot f^i(t)\left.\frac{\partial}{\partial x_j}\right|_{\beta(t)}+a^i(t,\beta(t))f^j(t)\Gamma_{ij}^k(\beta(t))\left.\frac{\partial}{\partial x_k}\right|_{\beta(t)}.
\end{equation}
  Finally, when $p\in S$ and $e\in T_p S$ are fixed, we set  
    \begin{equation}\label{estensionevettore}
        E(t)\coloneqq\left(d\Phi_t\right)|_p(e)\qquad\text{for every $t\in I$.} 
    \end{equation}
    $E$ is a vector field along $\beta$, and $E(t)\in T_{\beta(t)}S_t$ for every $t\in I$. In local coordinates around $p$, we write
      \begin{equation*}
        E(t)=b^j(t)\left.\frac{\partial}{\partial x_j}\right|_{\beta(t)}
    \end{equation*}
    Fix $p\in S$. Let $e_1,\ldots,e_n$ be an orthonormal basis of $T_p S$. The \emph{Jacobian} of $\Phi_t|_{S}$ at $p$ is defined by
    \begin{equation*}
        \jac\left(\Phi_t|_{S}\right)\left(p\right)\coloneqq\sqrt{\det \G(t)}
    \end{equation*}
    where the symmetric matrix $\G$ is defined by
    \begin{equation}\label{grammatrix}
         \G(t)_{ij}\coloneqq\langle E_i(t),E_j(t)\rangle,\qquad i,j=1,\ldots,n.
    \end{equation}
    If $p\in S$ and we fix local coordinates $x_1,\ldots, x_{n+1}$ in a neighborhood of $p$, say $U$, the continuity of $\Phi$ ensures that $\Phi_t(q)\in U$ for any $t$ small and any $q$ sufficiently close to $p$. We will tacitly assume this. 
\subsection{A computational lemma}
The following technical lemma is the main computational tool used below.
\begin{lemma}\label{fondamentalecomplemmavariazioniriemannianejhrnfi4rfjnfin}
    Let $p\in M$. Let $x_1,\ldots,x_{n+1}$ be local coordinates in $M$ near $p$.  Let $ \A,\B ,\C ,\D\in\Gamma(TM)$. Then, near $p$,
     \begin{equation}\label{nablatrecosenonnormale}
    \begin{split}
       \nabla_{\A}\nabla_{\B} \C
        &=A^\alpha B^\beta\left(\frac{\partial^2 C^\gamma}{\partial x_\alpha\partial x_\beta}\frac{\partial}{\partial x_\gamma}+\frac{\partial C^\gamma}{\partial x_\alpha}\Gamma_{\beta\gamma}^\delta\frac{\partial}{\partial x_\delta}+C^\gamma\frac{\partial \Gamma_{\alpha\gamma}^\delta}{\partial x_\beta}\frac{\partial}{\partial x_\delta}+\frac{\partial C^\gamma}{\partial x_\beta}\Gamma_{\alpha\gamma}^\delta\frac{\partial}{\partial x_\delta}+C^\gamma\Gamma_{\alpha\gamma}^\delta\Gamma_{\beta\delta}^\eta\frac{\partial}{\partial x_\eta}\right)\\
        &\quad+A^\alpha\frac{\partial B^\beta}{\partial x_\alpha}\left(\frac{\partial C^\gamma}{\partial x_\beta}\frac{\partial}{\partial x_\gamma}+C^\gamma\Gamma_{\beta\gamma}^\delta\frac{\partial}{\partial x_\delta}\right)+\R(\A ,\B )\C .
    \end{split}
\end{equation}
Assume in addition that $x_1,\ldots,x_{n+1}$ are normal coordinates centered at $p$. Then, at $p$, 
\begin{equation}\label{nablatrecose}
        \begin{split}
            \nabla_{\A}\nabla_{\B}\C&=\R(\A ,\B )\C +A^\alpha B^\beta C^\gamma\frac{\partial \Gamma_{\alpha\gamma}^\delta}{\partial x_\beta} \frac{\partial}{\partial x_\delta}+A^\alpha B^\beta\frac{\partial^2 C^\gamma}{\partial x_\alpha \partial x_\beta} \frac{\partial}{\partial x_\gamma}+A^\alpha\frac{\partial B^\beta}{\partial x_\alpha}\frac{\partial C^\gamma}{\partial x_\beta}\frac{\partial}{\partial x_\gamma}
        \end{split}
    \end{equation}
    and
    \begin{equation}\label{nablaquattrocose}
    \begin{split}          &\nabla_{\A}\nabla_{\B}\nabla_{\C}\D=\left(\nabla_{\B} \R\right)(\A ,\C )\D+\R(\A ,\B )\nabla_{\C} \D+\R\left(\A ,\C \right)\nabla_{\B} \D+\R\left(\A ,\nabla_{\B} \C\right)\D+\R\left(\nabla_\A \B,\C\right)\D\\
          &\quad+A^\alpha B^\beta\left(\frac{\partial ^2C^\delta}{\partial x_\alpha\partial x_\beta}\frac{\partial D^\gamma}{\partial x_\delta} +\frac{\partial C^\delta}{\partial x_\beta}\frac{\partial^2 D^\gamma}{\partial x_\alpha\partial x_\delta}+\frac{\partial C^\delta}{\partial x_\alpha}\frac{\partial^2 D^\gamma}{\partial x_\beta\partial x_\delta} +  C^\delta\frac{\partial^3 D^\gamma}{\partial x_\alpha\partial x_\beta\partial x_\delta} \right)\frac{\partial}{\partial x_\gamma}\\
          &\quad+A^\alpha B^\beta\left(C^\eta\frac{\partial D^\delta}{\partial x_\eta}\frac{\partial \Gamma_{\alpha\delta}^\gamma}{\partial x_\beta} +\frac{\partial C^\delta  }{\partial x_\beta}D^\eta\frac{\partial \Gamma^\gamma_{\alpha\eta}}{\partial x_\delta} + C^\delta\frac{\partial D^\eta}{\partial x_\beta}\frac{\partial \Gamma^\gamma_{\alpha\eta}}{\partial x_\delta}+\frac{\partial C^\delta }{\partial x_\alpha}D^\eta\frac{\partial \Gamma^\gamma_{\delta \eta}}{\partial x_\beta} +C^\delta \frac{\partial D^\eta}{\partial x_\alpha}\frac{\partial \Gamma^\gamma_{\delta \eta}}{\partial x_\beta} \right)\frac{\partial}{\partial x_\gamma}\\
          &\quad+A^\alpha B^\beta C^\delta D^\eta\frac{\partial^2 \Gamma^\gamma_{\alpha \eta}}{\partial x_\beta\partial x_\delta} \frac{\partial}{\partial x_\gamma}+A^\alpha\frac{\partial B^\beta}{\partial x_\alpha}\left(\frac{\partial C^\delta}{\partial x_\beta}\frac{\partial D^\gamma}{\partial x_\delta}+ C^\delta\frac{\partial^2 D^\gamma}{\partial x_\beta\partial x_\delta}+ C^\delta D^\eta\frac{\partial \Gamma^\gamma_{\beta \eta}}{\partial x_\delta}\right)\frac{\partial}{\partial x_\gamma}.
        \end{split}
    \end{equation}
\end{lemma}
\begin{proof}
    By a direct computation, near $p$,
    \begin{equation*}
    \begin{split}
       \nabla_{\A}\nabla_{\B} \C&=A^\alpha\nabla_{\frac{\partial}{\partial x_\alpha}}\left(B^\beta\nabla_{\frac{\partial}{\partial x_\beta}} \left(C^\gamma\frac{\partial}{\partial x_\gamma}\right)\right)\\
        &=A^\alpha B^\beta\nabla_{\frac{\partial}{\partial x_\alpha}}\left(\nabla_{\frac{\partial}{\partial x_\beta}} \left(C^\gamma\frac{\partial}{\partial x_\gamma}\right)\right)+A^\alpha\frac{\partial B^\beta}{\partial x_\alpha}\nabla_{\frac{\partial}{\partial x_\beta}} \left(C^\gamma\frac{\partial}{\partial x_\gamma}\right)\\
        &=A^\alpha B^\beta \nabla_{\frac{\partial}{\partial x_\alpha}}\left(\frac{\partial C^\gamma}{\partial x_\beta} \frac{\partial}{\partial x_\gamma}+C^\gamma\Gamma_{\beta\gamma}^\delta \frac{\partial}{\partial x_\delta}\right)+A^\alpha\frac{\partial B^\beta}{\partial x_\alpha}\left(\frac{\partial C^\gamma}{\partial x_\beta}\frac{\partial}{\partial x_\gamma}+C^\gamma\Gamma_{\beta\gamma}^\delta\frac{\partial}{\partial x_\delta}\right)\\
        &=A^\alpha B^\beta\left(\frac{\partial^2 C^\gamma}{\partial x_\alpha\partial x_\beta}\frac{\partial}{\partial x_\gamma}+\frac{\partial C^\gamma}{\partial x_\alpha}\Gamma_{\beta\gamma}^\delta\frac{\partial}{\partial x_\delta}+C^\gamma\frac{\partial \Gamma_{\beta\gamma}^\delta}{\partial x_\alpha}\frac{\partial}{\partial x_\delta}+\frac{\partial C^\gamma}{\partial x_\beta}\Gamma_{\alpha\gamma}^\delta\frac{\partial}{\partial x_\delta}+C^\gamma\Gamma_{\beta\gamma}^\delta\Gamma_{\alpha\delta}^\eta\frac{\partial}{\partial x_\eta}\right)\\
        &\quad+A^\alpha\frac{\partial B^\beta}{\partial x_\alpha}\left(\frac{\partial C^\gamma}{\partial x_\beta}\frac{\partial}{\partial x_\gamma}+C^\gamma\Gamma_{\beta\gamma}^\delta\frac{\partial}{\partial x_\delta}\right).
    \end{split}
\end{equation*}
Since
\begin{equation*}
\begin{split}
     A^\alpha B^\beta C^\gamma\left(\frac{\partial\Gamma_{\beta\gamma}^\delta}{\partial x_\alpha}\frac{\partial}{\partial x_\delta}+\Gamma_{\beta\gamma}^\delta\Gamma_{\alpha\delta}^\eta\frac{\partial}{\partial x_\eta}\right)&= A^\alpha B^\beta C^\gamma\left(\frac{\partial\Gamma_{\beta\gamma}^\delta}{\partial x_\alpha}+\Gamma_{\beta\gamma}^\eta\Gamma_{\alpha\eta}^\delta\right)\frac{\partial}{\partial x_\delta}\\
     &\overset{\eqref{riemanincoordinate}}{=}\R(\A ,\B )\C +A^\alpha B^\beta C^\gamma\left(\frac{\partial\Gamma_{\alpha\gamma}^\delta}{\partial x_\beta}+\Gamma_{\alpha\gamma}^\eta\Gamma_{\beta\eta}^\delta\right)\frac{\partial}{\partial x_\delta},
\end{split}
\end{equation*}
then \eqref{nablatrecosenonnormale} follows. 
Moreover, \eqref{nablatrecose} follows by \eqref{nablatrecosenonnormale} and \eqref{normcoordcristo}. Finally, at $p$,
    \begin{equation*}
        \begin{split}
            \nabla_{\A}&\nabla_{\B}\nabla_{\C}\D=\nabla_{\A}\nabla_{\B}\left(\nabla_{\C}\D\right)\\
            &\overset{\eqref{nablatrecose}}{=}\R(\A ,\B )\nabla_{\C} \D+A^\alpha B^\beta \left(\nabla_{\C} \D\right)^\gamma\frac{\partial \Gamma_{\alpha\gamma}^\delta}{\partial x_\beta} \frac{\partial}{\partial x_\delta}+A^\alpha B^\beta\frac{\partial^2 \left(\nabla_{\C} \D\right)^\gamma}{\partial x_\alpha \partial x_\beta} \frac{\partial}{\partial x_\gamma}+A^\alpha\frac{\partial B^\beta}{\partial x_\alpha}\frac{\partial \left(\nabla_{\C} \D\right)^\gamma}{\partial x_\beta}\frac{\partial}{\partial x_\gamma}.
        \end{split}
    \end{equation*}
    Fix $\alpha,\beta,\gamma=1,\ldots,n+1$.
    Recall that
    \begin{equation*}
        \left(\nabla_{\C} \D\right)^\gamma=C^\delta\frac{\partial D^\gamma}{\partial x_\delta}+C^\delta D^\eta\Gamma^\gamma_{\delta\eta}.
    \end{equation*}
    Then, at $p$,
    \begin{equation*}
        \begin{split}
            \frac{\partial \left(\nabla_{\C} \D\right)^\gamma}{\partial x_\beta}&= \frac{\partial C^\delta}{\partial x_\beta}\frac{\partial D^\gamma}{\partial x_\delta}+C^\delta\frac{\partial^2 D^\gamma}{\partial x_\beta\partial x_\delta}+\frac{\partial C^\delta  }{\partial x_\beta}D^\eta\Gamma^\gamma_{\delta\eta}+C^\delta\frac{\partial D^\eta}{\partial x_\beta}\Gamma^\gamma_{\delta\eta}+C^\delta D^\eta\frac{\partial \Gamma^\gamma_{\delta \eta}}{\partial x_\beta}\\
            \overset{\eqref{normcoordcristo}}&{=}\frac{\partial C^\delta}{\partial x_\beta}\frac{\partial D^\gamma}{\partial x_\delta}+C^\delta\frac{\partial^2 D^\gamma}{\partial x_\beta\partial x_\delta}+C^\delta D^\eta\frac{\partial \Gamma^\gamma_{\delta \eta}}{\partial x_\beta},
        \end{split}
    \end{equation*}
    and moreover
    \begin{equation*}
        \begin{split}
              \frac{\partial^2 \left(\nabla_{\C} \D\right)^\gamma}{\partial x_\alpha\partial x_\beta}&\overset{\eqref{normcoordcristo}}{=}\frac{\partial ^2C^\delta}{\partial x_\alpha\partial x_\beta}\frac{\partial D^\gamma}{\partial x_\delta}+\frac{\partial C^\delta}{\partial x_\beta}\frac{\partial^2 D^\gamma}{\partial x_\alpha\partial x_\delta}+\frac{\partial C^\delta}{\partial x_\alpha}\frac{\partial^2 D^\gamma}{\partial x_\beta\partial x_\delta}+C^\delta\frac{\partial^3 D^\gamma}{\partial x_\alpha\partial x_\beta\partial x_\delta}\\
              &\quad+\frac{\partial C^\delta  }{\partial x_\beta}D^\eta\frac{\partial \Gamma^\gamma_{\delta\eta}}{\partial x_\alpha}+C^\delta\frac{\partial D^\eta}{\partial x_\beta}\frac{\partial \Gamma^\gamma_{\delta\eta}}{\partial x_\alpha}+\frac{\partial C^\delta }{\partial x_\alpha}D^\eta\frac{\partial \Gamma^\gamma_{\delta \eta}}{\partial x_\beta}+C^\delta \frac{\partial D^\eta}{\partial x_\alpha}\frac{\partial \Gamma^\gamma_{\delta \eta}}{\partial x_\beta}+C^\delta D^\eta\frac{\partial^2 \Gamma^\gamma_{\delta \eta}}{\partial x_\alpha\partial x_\beta}.
        \end{split}
    \end{equation*}
    Therefore
    \begin{equation*}
             \begin{split}          \nabla_{\A}&\nabla_{\B}\nabla_{\C}\D=\R(\A ,\B )\nabla_{\C} \D+A^\alpha B^\beta C^\eta\frac{\partial D^\gamma}{\partial x_\eta}\frac{\partial \Gamma_{\alpha\gamma}^\delta}{\partial x_\beta} \frac{\partial}{\partial x_\delta}\\
          &\quad+A^\alpha B^\beta\left(\frac{\partial ^2C^\delta}{\partial x_\alpha\partial x_\beta}\frac{\partial D^\gamma}{\partial x_\delta} +\frac{\partial C^\delta}{\partial x_\beta}\frac{\partial^2 D^\gamma}{\partial x_\alpha\partial x_\delta}+\frac{\partial C^\delta}{\partial x_\alpha}\frac{\partial^2 D^\gamma}{\partial x_\beta\partial x_\delta} +  C^\delta\frac{\partial^3 D^\gamma}{\partial x_\alpha\partial x_\beta\partial x_\delta} \right)\frac{\partial}{\partial x_\gamma}\\
          &\quad+\underbrace{A^\alpha B^\beta\left(\frac{\partial C^\delta  }{\partial x_\beta}D^\eta\frac{\partial \Gamma^\gamma_{\delta\eta}}{\partial x_\alpha} + C^\delta\frac{\partial D^\eta}{\partial x_\beta}\frac{\partial \Gamma^\gamma_{\delta\eta}}{\partial x_\alpha}\right)\frac{\partial}{\partial x_\gamma}}_{\mathrm{I}} +A^\alpha B^\beta\left(\frac{\partial C^\delta }{\partial x_\alpha}D^\eta\frac{\partial \Gamma^\gamma_{\delta \eta}}{\partial x_\beta} +C^\delta \frac{\partial D^\eta}{\partial x_\alpha}\frac{\partial \Gamma^\gamma_{\delta \eta}}{\partial x_\beta} \right)\frac{\partial}{\partial x_\gamma}\\
          &\quad+\underbrace{A^\alpha B^\beta C^\delta D^\eta\frac{\partial^2 \Gamma^\gamma_{\delta \eta}}{\partial x_\alpha\partial x_\beta} \frac{\partial}{\partial x_\gamma}}_{\mathrm{II}}+A^\alpha\frac{\partial B^\beta}{\partial x_\alpha}\left(\frac{\partial C^\delta}{\partial x_\beta}\frac{\partial D^\gamma}{\partial x_\delta}+ C^\delta\frac{\partial^2 D^\gamma}{\partial x_\beta\partial x_\delta}\right)\frac{\partial}{\partial x_\gamma}+ \underbrace{A^\alpha\frac{\partial B^\beta}{\partial x_\alpha}C^\delta D^\eta\frac{\partial \Gamma^\gamma_{\delta \eta}}{\partial x_\beta}\frac{\partial}{\partial x_\gamma}}_{\mathrm{III}}.
        \end{split}
    \end{equation*}
    Since 
    \begin{align*}
       \mathrm{I}\overset{\eqref{rieminnorm}}&{=}\R\left(\A ,\nabla_{\B} \C\right)\D+A^\alpha B^\beta\frac{\partial C^\delta}{\partial x_\beta}D^\eta\frac{\partial \Gamma^\gamma_{\alpha\eta}}{\partial x_\delta}\frac{\partial}{\partial x_\gamma}+\R\left(\A ,\C \right)\nabla_{\B} \D+A^\alpha B^\beta C^\delta\frac{\partial D^\eta}{\partial x_\beta}\frac{\partial\Gamma^\gamma_{\alpha\eta}}{\partial x_\delta}\frac{\partial}{\partial x_\gamma},\\
       \mathrm{II}&=A^\alpha B^\beta C^\delta D^\eta\frac{\partial^2 \Gamma^\gamma_{\delta \eta}}{\partial x_\beta\partial x_\alpha} \frac{\partial}{\partial x_\gamma}\overset{\eqref{deririeminnorm}}{=}\left(\nabla_{\B}\R\right)(\A,\C )\D+A^\alpha B^\beta C^\delta D^\eta\frac{\partial^2 \Gamma^\gamma_{\alpha \eta}}{\partial x_\beta\partial x_\delta} \frac{\partial}{\partial x_\gamma},\\
       \mathrm{III}\overset{\eqref{rieminnorm}}&{=}\R\left(\nabla_\A \B,\C\right)\D+A^\alpha\frac{\partial B^\beta}{\partial x_\alpha}C^\delta D^\eta\frac{\partial \Gamma^\gamma_{\beta \eta}}{\partial x_\delta}\frac{\partial}{\partial x_\gamma}
    \end{align*}
    then \eqref{nablaquattrocose} follows.
    \end{proof}    \subsection{Pointwise variations} In this section we deduce the pointwise evolution of the relevant geometric quantities. 
    The first lemma (cf. \cite[Lemma 1.13]{MR4676392} and \cite[Lemma 1.22]{MR4676392}) describes the behavior of $E$.
    \begin{lemma}\label{lemmaevolutiondie}
    Let $p\in S.$ Let $e\in T_p S$. Then 
    \begin{align}
        \left.\frac{D}{dt}\right|_{t=0}E(t)&=\nabla_{e}\X;\label{prounoaux}\\
        \left.\frac{D^2}{dt^2}\right|_{t=0}E(t)&=\nabla_{e}\Z+\R(\X,e)\X.\label{produeaux}
    \end{align}
\end{lemma}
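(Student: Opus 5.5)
The key observation is that $E(t)=d\Phi_t|_p(e)$ is the $t$-derivative of a two-parameter map. Fix a smooth curve $c:(-\delta,\delta)\to S$ with $c(0)=p$ and $\dot c(0)=e$, and set $F(t,s)\coloneqq\Phi_t(c(s))$. Then $\partial_t F(t,s)=\Y(t,F(t,s))$ and $\partial_s F(t,0)=E(t)$. We will work with the covariant derivatives $\frac{D}{dt},\frac{D}{ds}$ along $F$ and use two standard tools. The first is the symmetry lemma $\frac{D}{dt}\partial_sF=\frac{D}{ds}\partial_tF$. The second is the curvature commutation $\frac{D}{dt}\frac{D}{ds}V-\frac{D}{ds}\frac{D}{dt}V=\R(\partial_tF,\partial_sF)V$ for vector fields $V$ along $F$, with the sign convention for $\R$ fixed in the preliminaries. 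Alternatively one could expand everything in normal coordinates centered at $p$ via \eqref{comefarecovdev} and \eqref{nablatrecose}, but the two-parameter approach is cleaner and we would use coordinates only to double check signs.

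For \eqref{prounoaux}, the symmetry lemma gives
\[
\left.\frac{D}{dt}\right|_{t=0}E=\left.\frac{D}{ds}\right|_{s=0}\partial_tF(0,s)=\left.\frac{D}{ds}\right|_{s=0}\Y(0,c(s))=\nabla_e\X,
\]
since $\Y(0,\cdot)=\X$.

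For \eqref{produeaux}, we first write
\[
\frac{D}{dt}\frac{D}{dt}\partial_sF=\frac{D}{dt}\frac{D}{ds}\partial_tF=\frac{D}{ds}\frac{D}{dt}\partial_tF+\R(\partial_tF,\partial_sF)\partial_tF.
\]
At $t=s=0$ the curvature term is $\R(\X,e)\X$. It remains to identify $\frac{D}{dt}\partial_tF$. Along the curve $t\mapsto F(t,s)$ we have
\[
\frac{D}{dt}\bigl[\Y(t,F(t,s))\bigr]=(\partial_t\Y)(t,F)+\nabla_{\Y(t,\cdot)}\Y(t,\cdot)\big|_{F}.
\]
This can be checked in coordinates via \eqref{comefarecovdev}: the $t$-derivative of $a^i(t,F(t,s))$ splits into $\partial_ta^i$ plus $a^j\partial_ja^i$. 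Evaluating at $t=0$ for every $s$ gives $\frac{D}{dt}\big|_{t=0}\partial_tF(\cdot,s)=\X'(c(s))+(\nabla_\X\X)(c(s))=\Z(c(s))$. Differentiating in $s$ at $s=0$ then yields $\nabla_e\Z$, which proves the second formula.

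The main obstacle is the justification of the order of operations. We need $\frac{D}{ds}\big|_{s=0}$ applied to $\frac{D}{dt}\partial_tF$ at $t=0$ to coincide with $\nabla_e$ applied to the vector field $\Z$. This holds because at $t=0$ the quantity $\frac{D}{dt}\partial_tF(0,s)$ equals $\Z(c(s))$ identically in $s$, and $\frac{D}{ds}$ along $s\mapsto c(s)$ of a restricted ambient field is exactly $\nabla_{\dot c}$. Beyond that, we must verify the curvature sign convention: with $\R(\A,\B)=\nabla_\A\nabla_\B-\nabla_\B\nabla_\A-\nabla_{[\A,\B]}$, the commutator $[\frac{D}{dt},\frac{D}{ds}]$ on fields along $F$ equals $\R(\partial_tF,\partial_sF)$. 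We would confirm this in normal coordinates using \eqref{rieminnorm}.
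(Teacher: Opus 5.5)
Your proof is correct. The paper does not prove this lemma itself: it imports both identities from Ritor\'e's book (Lemmas 1.13 and 1.22 there). Your argument with the two-parameter map $F(t,s)=\Phi_t(c(s))$ is the standard self-contained proof of these facts: the symmetry lemma gives \eqref{prounoaux}, and the commutation of $\frac{D}{dt}$ and $\frac{D}{ds}$ together with $\frac{D}{dt}\partial_tF(0,s)=\Z(c(s))$ gives \eqref{produeaux}. It is coordinate-free, unlike the normal-coordinate expansions the paper uses for the subsequent lemmas.

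The two points you flagged both go through. First, expanding $\frac{D}{dt}\frac{D}{ds}V-\frac{D}{ds}\frac{D}{dt}V$ in coordinates, everything cancels except $(\partial_tF)^l(\partial_sF)^iV^j$ contracted with the Christoffel combination in \eqref{riemanincoordinate}. That is exactly $\R(\partial_tF,\partial_sF)V$ in the paper's sign convention. Second, $\frac{D}{ds}$ at $(0,0)$ depends only on $s\mapsto\frac{D}{dt}\partial_tF(0,s)=\Z(c(s))$, so it equals $\nabla_e\Z$.
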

By \Cref{lemmaevolutiondie}, we can compute the evolution of the Jacobian of $\Phi$. \Cref{lemmaderivatejacobiano} is surely well-known (cf. \cite[Theorem 1.11]{MR4676392} and \cite[Theorem 1.21]{MR4676392}). We include its proof for the sake of completeness.
\begin{lemma}\label{lemmaderivatejacobiano}
    Let $p\in S$. Then
    \begin{align}
        \left.\frac{d}{dt}\right|_{t=0}\jac\left(\Phi_t|_{S}\right)\left(p\right)&=\varphi H+\divv^S\X^T\label{derijacuno},\\
        \left.\frac{d^2}{dt^2}\right|_{t=0}\jac\left(\Phi_t|_{S}\right)\left(p\right)&=\left(\divv^S\X\right)^2-\left\langle\nabla^S\X,\left(\nabla^S\X\right)^t\right\rangle-\ric(\X,\X)-\R(\X^T,\n,\X^T,\n)\label{derijacdue}\\
        &\quad+\left|\nabla^S\varphi\right|^2-2h\left(\nabla^S\varphi,\X^T\right)+h^2\left(\X^T,\X^T\right)+\divv^S\Z\nonumber.
    \end{align}
    In particular, if $\Phi$ is a normal variation, 
    \begin{equation}\label{derjduenormal}
    \begin{split}
         \left.\frac{d^2}{dt^2}\right|_{t=0}\jac\left(\Phi_t|_{S}\right)\left(p\right)&=\varphi^2H^2-\varphi^2|h|^2-\varphi^2\ric(\n,\n)+\left|\nabla^S\varphi\right|^2+\divv^S\Z.
        \end{split}
    \end{equation}
\end{lemma}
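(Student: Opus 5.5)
Fix $p\in S$ and an orthonormal basis $e_1,\ldots,e_n$ of $T_pS$, and let $E_i(t)=(d\Phi_t)|_p(e_i)$ as in \eqref{estensionevettore}. Then $\G(0)=I$, and the computation reduces to the expansion of $\sqrt{\det\G}$ at the identity. Using $\tfrac{d}{dt}\det\G=\det\G\,\trace(\G^{-1}\dot\G)$, we get
\begin{equation*}
\left.\tfrac{d}{dt}\right|_{t=0}\sqrt{\det\G}=\tfrac12\trace\dot\G(0),\qquad \left.\tfrac{d^2}{dt^2}\right|_{t=0}\sqrt{\det\G}=\tfrac12\trace\ddot\G(0)+\tfrac14\big(\trace\dot\G(0)\big)^2-\tfrac12\trace\big(\dot\G(0)^2\big).
\end{equation*}
The entries of $\dot\G$ and $\ddot\G$ come from metric compatibility of $\tfrac{D}{dt}$ together with \Cref{lemmaevolutiondie}:
\begin{equation*}
\dot\G_{ij}(0)=\langle\nabla_{e_i}\X,e_j\rangle+\langle e_i,\nabla_{e_j}\X\rangle,
\end{equation*}
\begin{equation*}
\ddot\G_{ij}(0)=\langle\nabla_{e_i}\Z+\R(\X,e_i)\X,e_j\rangle+2\langle\nabla_{e_i}\X,\nabla_{e_j}\X\rangle+\langle e_i,\nabla_{e_j}\Z+\R(\X,e_j)\X\rangle.
\end{equation*}

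For the first variation, $\tfrac12\trace\dot\G(0)=\divv^S\X$. Splitting $\X=\X^T+\varphi\n$ gives $\divv^S(\varphi\n)=\varphi\,\divv^S\n=\varphi H$, since $\langle\n,e_i\rangle=0$. This yields \eqref{derijacuno}.

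For the second variation, write $a_{ij}=\langle\nabla_{e_i}\X,e_j\rangle$, so that $\dot\G=a+a^t$. Then $\tfrac14(\trace\dot\G)^2=(\divv^S\X)^2$ and $\tfrac12\trace(\dot\G^2)=|a|^2+\langle a,a^t\rangle$. The trace of $\ddot\G$ contributes $\divv^S\Z$, the curvature terms $\sum_i\R(\X,e_i,\X,e_i)$, and $\sum_i|\nabla_{e_i}\X|^2$. Decompose $\nabla_{e_i}\X$ into tangential and normal parts: $|\nabla_{e_i}\X|^2=\sum_j a_{ij}^2+\langle\nabla_{e_i}\X,\n\rangle^2$. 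The $|a|^2$ terms then cancel, leaving
\begin{equation*}
(\divv^S\X)^2-\langle\nabla^S\X,(\nabla^S\X)^t\rangle+\sum_i\langle\nabla_{e_i}\X,\n\rangle^2+\sum_i\R(\X,e_i,\X,e_i)+\divv^S\Z .
\end{equation*}
This identifies $\nabla^S\X$ with the tangential projection of $\nabla\X$ restricted to $TS$, which I take to be the paper's convention.

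The remaining pieces are handled term by term.
\begin{itemize}
\item Curvature term: completing $e_1,\ldots,e_n$ with $\n$ and using the antisymmetry of $\R$ in the last two slots gives $\sum_i\R(\X,e_i,\X,e_i)=-\ric(\X,\X)-\R(\X,\n,\X,\n)$. By antisymmetry $\R(\X,\n,\X,\n)=\R(\X^T,\n,\X^T,\n)$, which matches \eqref{derijacdue} after fixing sign conventions.
\item Normal term: $\langle\nabla_{e_i}\X,\n\rangle=e_i\varphi-\langle\X^T,\nabla_{e_i}\n\rangle=e_i\varphi-h(e_i,\X^T)$. Squaring and summing gives $|\nabla^S\varphi|^2-2h(\nabla^S\varphi,\X^T)+h^2(\X^T,\X^T)$.
\end{itemize}

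In the normal case $\X^T=0$, the same expansion simplifies. We have $\nabla^S\X=\varphi h$, so $(\divv^S\X)^2=\varphi^2H^2$ and $\langle\nabla^S\X,(\nabla^S\X)^t\rangle=\varphi^2|h|^2$, while $\ric(\X,\X)=\varphi^2\ric(\n,\n)$. This gives \eqref{derjduenormal}.

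The only delicate point is the bookkeeping of signs in the curvature term. The convention $\R(\A,\B,\C,\D)=\langle\R(\A,\B)\C,\D\rangle$ must be reconciled with both $\ric$ and the sign in \eqref{produeaux}. I will check this against the normal-variation case, where the answer must agree with the classical second variation of area.
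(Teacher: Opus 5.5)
Your proposal is correct and follows the paper's proof essentially step by step. Both arguments use Jacobi's formula at $\G(0)=I$, read off the entries of $\dot\G(0)$ and $\ddot\G(0)$ from \Cref{lemmaevolutiondie}, cancel $\sum_{i,j}\langle\nabla_{e_i}\X,e_j\rangle^2$, take the trace of the curvature term over $e_1,\ldots,e_n,\n$, and use the normal component $\langle\nabla_{e_i}\X,\n\rangle=e_i(\varphi)-h(e_i,\X^T)$.

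The sign you flagged needs no further reconciliation. With $\ric(\B,\C)=\trace(\A\mapsto\R(\A,\B)\C)$, which is the convention the paper uses, the identity $\sum_{i=1}^n\R(\X,e_i,\X,e_i)=-\ric(\X,\X)-\R(\X,\n,\X,\n)$ holds exactly as you wrote it. It follows from antisymmetry in the first two slots, or equivalently from antisymmetry in the last two slots combined with pair symmetry.
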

\begin{proof}
    Since $\G(t)$ has full rank for every $t$ sufficiently small, Jacobi's formula grants that, for any such $t$,
    \begin{equation*}
        \frac{d \det \G(t)}{d t}=\det\G(t)\trace\left(\G(t)^{-1}\frac{d \G(t)}{dt}\right), 
    \end{equation*}
    whence
    \begin{equation*}
          \frac{d}{dt}\jac\left(\Phi_t|_{S}\right)\left(p\right)=\frac{1}{2}\jac\left(\Phi_t|_{S}\right)\left(p\right)\trace\left(\G(t)^{-1}\frac{d \G(t)}{dt}\right).
    \end{equation*}
   Recalling that $\G(0)_{ij}=\delta_{ij}$ for $i,j=1,\ldots,n$ by construction,
    \begin{align*}
         \left. \frac{d}{dt}\right|_{t=0}\jac\left(\Phi_t|_{S}\right)\left(p\right)=\frac{1}{2}\trace\left(\left.\frac{d }{dt}\right|_{t=0}\G(t)\right)\overset{\eqref{prounoaux}}{=}\sum_{i=1}^n\left\langle \nabla_{e_i}\X,e_i\right\rangle=\divv^S\X.
    \end{align*}
    Since 
    \begin{equation*}
        \divv^S\X=\divv^S\left(\varphi\n\right)+\divv^S\X^T=\varphi H+\divv^S\X^T,
    \end{equation*}
    \eqref{derijacuno} follows. Moreover, since \begin{equation}\label{derivatadimatriceinversa}
        \frac{d\G(t)^{-1}}{dt}=-\G(t)^{-1}\frac{d\G(t)}{dt}\G(t)^{-1},
    \end{equation}
    then
    \begin{equation*}
        \begin{split}
             \left. \frac{d^2}{dt^2}\right|_{t=0}\jac\left(\Phi_t|_{S}\right)\left(p\right)&\overset{\eqref{derijacuno}}{=}\left(\divv^S\X\right)^2+\underbrace{\frac{1}{2}\trace\left(-\left(\left.\frac{d }{dt}\right|_{t=0}\G(t)\right)^2\right)}_{\mathrm{I}}+\underbrace{\frac{1}{2}\trace\left(\left.\frac{d^2 }{dt^2}\right|_{t=0}\G(t)\right)}_{\mathrm{II}}.
        \end{split}
    \end{equation*}
    First,
    \begin{equation*}
        \begin{split}
            \mathrm{I}=
            \overset{\eqref{prounoaux}}{=}-\frac{1}{2}\sum_{i,j=1}^n\left(\left\langle\nabla_{e_i}\X,e_j\right\rangle+\left\langle \nabla_{e_j}\X,e_i\right\rangle\right)^2=-\sum_{i,j=1}^n\left\langle\nabla_{e_i}\X,e_j\right\rangle^2-\sum_{i,j=1}^n\left\langle\nabla_{e_i}\X,e_j\right\rangle\left\langle \nabla_{e_j}\X,e_i\right\rangle.
        \end{split}
    \end{equation*}
    Moreover,
    \begin{equation*}
        \begin{split}
            \mathrm{II}&\overset{\eqref{prounoaux},\eqref{produeaux}}{=}\sum_{i=1}^n\left\langle \nabla_{e_i}\X,\nabla_{e_i}\X\right\rangle+\sum_{i=1}^n\left\langle\nabla_{e_i}\Z+\R(\X,e_i)\X,e_i\right\rangle\\
            &=\sum_{i,j=1}^n\left\langle \nabla_{e_i}\X,e_j\right\rangle^2+\sum_{i=1}^n\left\langle \nabla_{e_i}\X,\n\right\rangle^2+\divv^S\Z-\ric(\X,\X)-\R(\X,\n,\X,\n)\\
            &=\sum_{i,j=1}^n\left\langle \nabla_{e_i}\X,e_j\right\rangle^2+\sum_{i=1}^n\left(e_i(\varphi)-\left\langle A\left(\X^T\right),e_i\right\rangle\right)^2+\divv^S\Z-\ric(\X,\X)-\R(\X^T,\n,\X^T,\n)\\
            &=\sum_{i,j=1}^n\left\langle \nabla_{e_i}\X,e_j\right\rangle^2+\left|\nabla^S\varphi\right|^2-2h\left(\nabla^S\varphi,\X^T\right)+h^2\left(\X^T,\X^T\right)+\divv^S\Z-\ric(\X,\X)-\R(\X^T,\n,\X^T,\n).\\
        \end{split}
    \end{equation*}
    In this way, \eqref{derijacdue} follows. Finally, if $\Phi$ is a normal variation, then $\X=\varphi\n$ and $\X^T=0$. In particular, $\nabla^S\X=\left(\nabla^S\X\right)^t=\varphi h$, and \eqref{derjduenormal} by \eqref{derijacdue}.
\end{proof}
Define $V(t)\coloneqq \N(t,\beta(t))$. By definition, $V$ is a vector field along $\beta$, and in particular $V(t)$ is normal to $S_t$ at $\beta(t)$ for every $t\in I$. $V$ evolves as follows.
\begin{lemma}
    Let $p\in S$. 
    Let $e_1,\ldots,e_n$ be an orthonormal basis of $T_p S$. Then
    \begin{align}
        &\,V'(p)\coloneqq \left.\frac{D}{dt}\right|_{t=0}V(t)=-\nabla^S\varphi+ A\left( \X^T\right) ; \label{lemma125ritore}\\
          &V''(p)\coloneqq \left.\frac{D^2}{dt^2}\right|_{t=0}V(t)=\left(-\left|\nabla^S\varphi\right|^2-\left|A\left(\X ^T\right)\right|^2+2 h\left( \X^T,\nabla^S\varphi\right)\right)\n\label{secondordernormalbehave}\\
         &\quad+\sum_{j=1}^n\left(-\left \langle \n ,\nabla_{e_j}\Z+\R(\X,e_j)\ \X^T\right\rangle+2\left\langle \nabla^S\varphi- A\left( \X^T\right) ,\varphi A(e_j)+\nabla_{e_j}\ \X^T\right\rangle\right)e_j.\nonumber
    \end{align}
    Moreover, fix local coordinates $x_1,\ldots,x_{n+1}$ in $M$ near $p$, and set 
    \begin{equation*}
        \n'(q)=\frac{\partial c^k}{\partial t}(0,q)\left.\frac{\partial}{\partial x_k}\right|_q,\qquad \n''(q)=\frac{\partial^2 c^k}{\partial t^2}(0,q)\left.\frac{\partial}{\partial x_k}\right|_q\qquad\text{locally around $p$.}
    \end{equation*}
 Then, when $q\in S$ is close to $p$,
 \begin{align}
        \n'(q)&=V'(q)-\nabla_\X \n, \label{piccolopezzodercovnorm}\\
        \n''(q)&    =V''(q)-2\nabla_\X\n'-\nabla_{\X'}\n-\nabla_\X\nabla_\X \n\label{pezzocondtquadrockappa}
 \end{align}
\end{lemma}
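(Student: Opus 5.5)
The plan is to differentiate the two defining constraints of $V$ along $\beta$: $\langle V(t),V(t)\rangle=1$ and $\langle V(t),E(t)\rangle=0$ for every $E$ as in \eqref{estensionevettore}. Then we read off the normal and tangential components of $V'$ and $V''$ at $t=0$ using \Cref{lemmaevolutiondie}. The coordinate identities \eqref{piccolopezzodercovnorm} and \eqref{pezzocondtquadrockappa} will follow separately by expanding $V(t)=c^k(t,\beta(t))\partial_{x_k}$ with the chain rule and \eqref{comefarecovdev}.

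\emph{First order.} Differentiating $|V|^2=1$ gives $\langle V',\n\rangle=0$, so $V'(p)$ is tangent. Differentiating $\langle V,E\rangle=0$ at $t=0$ and using \eqref{prounoaux} gives
\[
\langle V',e\rangle=-\langle \n,\nabla_e\X\rangle=-\langle\n,\nabla_e(\X^T+\varphi\n)\rangle=-e(\varphi)+h(e,\X^T).
\]
Here we used $\langle\n,\nabla_e\X^T\rangle=-\langle\nabla_e\n,\X^T\rangle=-h(e,\X^T)$. Since $h$ is symmetric, this yields \eqref{lemma125ritore}.

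\emph{Second order.} Differentiating $|V|^2=1$ twice gives $\langle V'',\n\rangle=-|V'|^2$. Expanding $|{-\nabla^S\varphi}+A(\X^T)|^2$ produces exactly the normal coefficient in \eqref{secondordernormalbehave}. Differentiating $\langle V,E_j\rangle=0$ twice gives
\[
\langle V'',e_j\rangle=-2\langle V',\nabla_{e_j}\X\rangle-\langle\n,\nabla_{e_j}\Z+\R(\X,e_j)\X\rangle,
\]
by \eqref{prounoaux} and \eqref{produeaux}. Since $V'$ is tangent, $\langle V',\nabla_{e_j}\X\rangle=\langle V',\varphi A(e_j)+\nabla_{e_j}\X^T\rangle$, and substituting \eqref{lemma125ritore} gives the factor $2\langle\nabla^S\varphi-A(\X^T),\dots\rangle$. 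For the curvature term, $\R(\X,e_j,\X,\n)$ with $\X=\X^T+\varphi\n$ reduces to the $\X^T$ part in the third slot by antisymmetry of the $(4,0)$ tensor in its last two entries. We should double check here which slot the paper keeps as $\X$ versus $\X^T$. This matches the statement.

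\emph{Coordinate formulas.} Write $V(t)=c^k(t,\beta(t))\partial_k$ and note $\dot\beta=\Y(t,\beta(t))$. By \eqref{comefarecovdev},
\[
\frac{D}{dt}V=\Big(\partial_tc^k+a^i\partial_ic^k+a^ic^j\Gamma^k_{ij}\Big)\partial_k=\partial_tc^k\,\partial_k+\nabla_{\Y}\N .
\]
At $t=0$ this gives $V'=\n'+\nabla_\X\n$, which is \eqref{piccolopezzodercovnorm}. Applying $\frac{D}{dt}$ once more, the field $\partial_t c^k\partial_k$ contributes $\partial_t^2c^k\partial_k+\nabla_\X\n'$. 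The term $\nabla_{\Y(t)}\N(t)$ along $\beta$ contributes $\nabla_{\X'}\n+\nabla_\X\n'+\nabla_\X\nabla_\X\n$; to check this cleanly, expand in coordinates, where the Christoffel terms reassemble into covariant derivatives. Summing gives \eqref{pezzocondtquadrockappa}.

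The main obstacle is this last step. Keeping track of the $t$-derivatives of $a^i$ (which yield $\X'$ rather than $\Z$) and of $c^k$, and confirming that no stray Christoffel derivative terms survive, is the delicate bookkeeping. To simplify it, I would work in normal coordinates centered at $p$, where $\Gamma(p)=0$.
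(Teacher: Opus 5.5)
Your proposal is correct and follows essentially the paper's route. The paper also obtains the normal and tangential parts of $V''$ by differentiating $|V|^2=1$ and $\langle V,F\rangle=0$ and inserting \eqref{prounoaux}, \eqref{produeaux} and \eqref{lemma125ritore}; it cites Ritor\'e for \eqref{lemma125ritore}, which you derive by the same mechanism. It then proves \eqref{piccolopezzodercovnorm} and \eqref{pezzocondtquadrockappa} by the same expansion via \eqref{comefarecovdev}.

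Normal coordinates are not needed: your splitting $\frac{D}{dt}V=\partial_t\N+\nabla_{\Y}\N$ works in any chart, because $\Gamma$ does not depend on $t$. If you do use them, center them at $q$ rather than $p$. This is legitimate because $\n'(q)$ and $\n''(q)$ are just the first and second $t$-derivatives of $\N(t,q)$ at $t=0$, hence chart-independent.
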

\begin{proof}
   First,  \eqref{lemma125ritore} follows by \cite[Lemma 1.25]{MR4676392}. We prove \eqref{secondordernormalbehave}. As $|V(t)|\equiv 1$,
then
\begin{equation}\label{solitacosa}
      \left\langle\frac{D}{dt}V(t),V(t)\right\rangle\equiv 0.
\end{equation}
In particular, by \eqref{lemma125ritore} and \eqref{solitacosa},
\begin{equation*}
    \begin{split}
         \left\langle\left.\frac{D^2}{dt^2}\right|_{t=0}V(t),\left .\n\right|_p\right\rangle&=\left.\frac{d}{dt}\right|_{t=0}\left\langle\frac{D}{dt}V(t),V(t)\right\rangle-\left|V'(p)\right|^2
         =-\left|\nabla^S\varphi\right|^2-\left|A\left(\X ^T\right)\right|^2+2 h\left( \X^T,\nabla^S\varphi\right).
    \end{split}
\end{equation*}
Let $f\in T_p S$. Set $F(t)\coloneqq \left(d\Phi_t\right)|_p(f)$. Recall that $F(t)$ is a vector field along $\beta$, and $F(t)\in T_{\beta(t)}S_t$. Therefore
\begin{equation*}
    \begin{split}
         \left\langle\left.\frac{D^2}{dt^2}\right|_{t=0}V(t),f\right\rangle&=\left.\frac{d}{dt}\right|_{t=0}\left\langle\frac{D}{dt}V(t),F(t)\right\rangle-\left\langle\left.\frac{D}{dt}\right|_{t=0}V(t),\left.\frac{D}{dt}\right|_{t=0}F(t)\right\rangle\\   
         &=-\left.\frac{d}{dt}\right|_{t=0}\left\langle V(t),\frac{D}{dt}F(t)\right\rangle-\left\langle\left.\frac{D}{dt}\right|_{t=0}V(t),\left.\frac{D}{dt}\right|_{t=0}F(t)\right\rangle\\  
         &=-\left\langle \n,\left.\frac{D^2}{dt^2}\right|_{t=0}F(t)\right\rangle-2\left\langle\left.\frac{D}{dt}\right|_{t=0}V(t),\left.\frac{D}{dt}\right|_{t=0}F(t)\right\rangle\\  
         \overset{\eqref{prounoaux},\eqref{produeaux},\eqref{lemma125ritore}}&{=}-\left \langle \n ,\nabla_{f}\left(\nabla_\X\X+\X'\right)+\R(\X,f)\X\right\rangle+2\left\langle \nabla^S\varphi- A\left( \X^T\right) ,\nabla_f \X\right\rangle\\
         &=-\left \langle \n,\nabla_{f}\Z+\R(\X,f)\ \X^T\right\rangle+2\left\langle \nabla^S\varphi- A\left(\ \X^T\right) ,\varphi A(f)+\nabla_f\ \X^T\right\rangle.
    \end{split}
\end{equation*}
In this way, \eqref{secondordernormalbehave} follows. Next we prove \eqref{piccolopezzodercovnorm}. Indeed, if $q\in S$ is close to $p$,
    \begin{equation*}
    \begin{split}
          \left.\frac{D}{dt}\right|_{t=0}\N(t,\beta_q(t))&\overset{\eqref{comefarecovdev}}{=}\frac{\partial c^k}{\partial t}(0,q)\left.\frac{\partial}{\partial x_k}\right|_q+X^i(q)\frac{\partial N^k}{\partial x_i}(q)\left.\frac{\partial}{\partial x_k}\right|_q+X^i(q)N^k(q)\Gamma_{ik}^l(q)\left.\frac{\partial}{\partial x_l}\right|_q=\n'+\nabla_\X \n.
    \end{split}
    \end{equation*}
 Finally, for $q$ close to $p$,
\begin{equation*}
\begin{split}
    & \left.\frac{D^2}{dt^2}\right|_{t=0}\mathcal N(t,\beta_q(t))=\left.\frac{D^2}{dt^2}\right|_{t=0}\left(c^k(t,\beta_q(t))\left.\frac{\partial}{\partial x_k}\right|_{\beta_q(t)}\right)\\
    &\quad=\left.\frac{D}{dt}\right|_{t=0}\left(\frac{\partial c^k}{\partial t}(t,\beta_q(t))\left.\frac{\partial}{\partial x_k}\right|_{\beta_q(t)}+a^i(t,\beta_q(t))\frac{\partial c^k}{\partial x_i}(t,\beta_q(t))\left.\frac{\partial}{\partial x_k}\right|_{\beta_q(t)}\right)\\
    &\qquad+\left.\frac{D}{dt}\right|_{t=0}\left(a^i(t,\beta_q(t))c^k(t,\beta_q(t))\Gamma_{ik}^l(\beta_q(t))\left.\frac{\partial}{\partial x_l}\right|_{\beta_q(t)}\right)\\
    &\quad=\n''(q)+2X^i(q)\frac{\partial^2 c^k}{\partial x_i\partial t}(0,q)\left.\frac{\partial}{\partial x_k}\right|_{q}+2X^i(q)\frac{\partial c^k}{\partial t}(0,q)\Gamma_{ik}^l(q)\left.\frac{\partial}{\partial x_l}\right|_{q}+(X')^i(q)\frac{\partial N^k}{\partial x_i}(q)\left.\frac{\partial}{\partial x_k}\right|_{q}\\
    &\qquad+X^l(q)\frac{\partial X^i}{\partial x_l}(q)\frac{\partial N^k}{\partial x_i}(q)\left.\frac{\partial}{\partial x_k}\right|_{q}+X^i(q)X^l(q)\frac{\partial ^2N^k}{\partial x_l\partial x_i}(q)\left.\frac{\partial}{\partial x_k}\right|_{q}+2X^i(q)X^l(q)\frac{\partial N^k}{\partial x_i}(q)\Gamma_{lk}^m(q)\left.\frac{\partial}{\partial x_m}\right|_{q}\\
    &\qquad+\left(X'\right)^i(q)N^k(q)\Gamma_{ik}^l(q)\left.\frac{\partial}{\partial x_l}\right|_{q}+X^m(q)\frac{\partial X^i}{\partial x_m}(q)N^k(q)\Gamma_{ik}^l(q)\left.\frac{\partial}{\partial x_l}\right|_{q}\\
    &\qquad+X^i(q)X^m(q)N^k(q)\frac{\partial \Gamma_{ik}^l}{\partial x_m}(q)\left.\frac{\partial}{\partial x_l}\right|_{q}+X^i(q)X^m(q)N^k(q)\Gamma^l_{ik}(q)\Gamma^s_{ml}\left.\frac{\partial}{\partial x_s}\right|_q,
\end{split}
\end{equation*}
whence \eqref{pezzocondtquadrockappa} follows by \eqref{nablatrecosenonnormale}.
\end{proof}
Next, we describe the evolution of the shape operator. The following proof relies crucially on \Cref{fondamentalecomplemmavariazioniriemannianejhrnfi4rfjnfin}.
\begin{lemma}\label{lemmaevoluzioneshapeoperatoririem3858943}
    Let $p\in S.$ Let $e\in T_p S$. Then
    \begin{align}
          \left.\frac{D}{dt}\right|_{t=0}\left(\left.\nabla_{E(t)}\N(t,\cdot)\right|_{\beta(t)}\right)&=\nabla_eV'+\R(\X,e)\n ,\label{protreaux}\\
            \left.\frac{D^2}{dt^2}\right|_{t=0}\left(\left.\nabla_{E(t)}\N(t,\cdot)\right|_{\beta(t)}\right)&=\nabla_e V''+\R\left(\X,\nabla_e\X\right)\n+2\R(\X,e)V'+\R\left(\Z,e\right)\n+\left(\nabla_\X \R\right)(\X,e)\n\label{proquattroaux},
    \end{align}
    where $V'$ and $V''$ are given respectively by \eqref{lemma125ritore} and \eqref{secondordernormalbehave}.
\end{lemma}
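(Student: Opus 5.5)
We work in normal coordinates $x_1,\ldots,x_{n+1}$ centered at $p$, so that \eqref{normcoordcristo}, \eqref{rieminnorm} and \eqref{deririeminnorm} hold. In these coordinates, $W(t)\coloneqq\nabla_{E(t)}\N(t,\cdot)|_{\beta(t)}$ has components
\begin{equation*}
W^k(t)=b^j(t)\frac{\partial c^k}{\partial x_j}(t,\beta(t))+b^j(t)c^l(t,\beta(t))\Gamma^k_{jl}(\beta(t)).
\end{equation*}
We differentiate this expression once or twice in $t$ and apply \eqref{comefarecovdev} to pass from $\frac{d}{dt}W^k$ to $\frac{D}{dt}W$.

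The derivatives of the ingredients at $t=0$ are needed first. We have $\dot\beta(0)=\X$ and $\frac{d}{dt}\big|_{t=0}a^i(t,\beta(t))=(X')^i+X^l\partial_lX^i$. Since $E$ is the push-forward of $e$ by the flow of $\Y$, we also have $\dot b^j(0)=e^i\partial_iX^j$ at $p$, which is just \eqref{prounoaux} with $\Gamma(p)=0$. Differentiating once more gives $\ddot b^j(0)$, which is computed from \eqref{produeaux} by removing the $\Gamma$-contributions. Likewise $\frac{d}{dt}c^k(t,\beta(t))$ involves $\n'$ and $\X^i\partial_iN^k$, and these are related to $V'$ by \eqref{piccolopezzodercovnorm}. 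The second derivative involves $\n''$, related to $V''$ by \eqref{pezzocondtquadrockappa}.

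For \eqref{protreaux}, at $t=0$ we have $\frac{D}{dt}W=\frac{d}{dt}W^k\partial_k$, since $\Gamma(p)=0$. Expanding, one obtains the terms $\partial_iX^j e^i\partial_jN^k$, $e^j\partial_j\big((\n')^k\big)$, $e^jX^i\partial_i\partial_jN^k$, and $e^jX^iN^l\partial_i\Gamma^k_{jl}$. We compare these with $\nabla_eV'=\nabla_e\n'+\nabla_e\nabla_\X\n$, expanding the last term via \eqref{nablatrecose} with $\A=e$, $\B=\X$, $\C=\n$. The difference is exactly $\R(\X,e)\n$, obtained from the antisymmetrization \eqref{rieminnorm} of $\partial\Gamma$, with the sign dictated by which index is differentiated.

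For \eqref{proquattroaux} we take one more derivative, now keeping the term $a^iW^j\Gamma^k_{ij}$ from \eqref{comefarecovdev}. Its $t$-derivative contributes at $p$ via $\partial\Gamma$, giving $X^iX^mW^j\partial_m\Gamma^k_{ij}$. The resulting expansion contains terms with $\partial^2\Gamma$, products $\partial\Gamma\cdot\partial(\cdot)$, and pure coordinate derivatives of $X$, $N$, $\n'$ and $\n''$. We then expand the target
\begin{equation*}
\nabla_eV''=\nabla_e\n''+2\nabla_e\nabla_\X\n'+\nabla_e\nabla_{\X'}\n+\nabla_e\nabla_\X\nabla_\X\n,
\end{equation*}
using \eqref{nablatrecose} for the two-derivative terms and \eqref{nablaquattrocose} for the triple term with $\A=e$, $\B=\C=\X$, $\D=\n$. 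Since $\nabla_\X\X$ vanishes in the $\Gamma$-part at $p$, this also lets us rewrite $\X'$-terms in terms of $\Z$.

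The final step is matching the two expansions. The pure-derivative terms should cancel identically. The $\partial^2\Gamma$ terms should combine through \eqref{deririeminnorm} into $(\nabla_\X\R)(\X,e)\n$. The $\partial\Gamma$ terms should group into $2\R(\X,e)V'$, $\R(\X,\nabla_e\X)\n$ and $\R(\Z,e)\n$, where the last comes from $\X'+\nabla_\X\X$. The curvature terms produced by \eqref{nablaquattrocose} also have to be absorbed: these are $\R(e,\X)\nabla_\X\n$, $\R(e,\nabla_\X\X)\n$ and $\R(\nabla_e\X,\X)\n$.

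\textbf{Main obstacle.} The main difficulty is this bookkeeping, in particular making the index pattern of the $\partial^2\Gamma$ terms from the $t$-expansion agree with the specific pattern $\partial_\beta\partial_\delta\Gamma^\gamma_{\alpha\eta}$ left over in \eqref{nablaquattrocose}. I would handle this by listing every term by type, namely how many $\Gamma$-derivatives it has and on which of the fields $X$, $N$, $\n'$ they fall. Each group is then matched separately, using the symmetry $\Gamma^k_{ij}=\Gamma^k_{ji}$ and \eqref{rieminnorm}.
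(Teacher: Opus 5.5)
Your proposal is correct and follows essentially the same route as the paper's proof. Both work in normal coordinates at $p$, expand $\nabla_{E(t)}\N(t,\cdot)$ in components, obtain $\dot b(0)$ and $\ddot b(0)$ from \eqref{prounoaux} and \eqref{produeaux}, pass from $\n',\n''$ to $V',V''$ via \eqref{piccolopezzodercovnorm} and \eqref{pezzocondtquadrockappa}, and compare term by term with \eqref{nablatrecose} and \eqref{nablaquattrocose}.

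The bookkeeping you flag as the main obstacle is simpler than you expect. The pure-derivative, $\partial\Gamma$ and $\partial^2\Gamma$ terms of the $t$-expansion coincide exactly with the leftover coordinate terms of \eqref{nablatrecose} and \eqref{nablaquattrocose}, so no further symmetrization is needed. What remains is to collect the explicit curvature terms
$-2\R(e,\X)(\n'+\nabla_\X\n)-\R(e,\X'+\nabla_\X\X)\n-\R(\nabla_e\X,\X)\n-(\nabla_\X\R)(e,\X)\n$, which gives \eqref{proquattroaux}.
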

\begin{proof}

Consider local normal coordinates $x_1,\ldots, x_{n+1}$ centered at $p$.  Since $E(t)\in T_{\beta(t)}S_t,$ then
    \begin{equation*}
        \begin{split}
            \left.\nabla_{E(t)}\N(t,\cdot)\right|_{\beta(t)}&=b^j(t)\frac{\partial c^k}{\partial x_j}(t,\beta(t))\left.\frac{\partial}{\partial x_k}\right|_{\beta(t)}+b^j(t)c^k(t,\beta(t))\Gamma_{jk}^l(\beta(t))\left.\frac{\partial}{\partial x_l}\right|_{\beta(t)}.
        \end{split}
    \end{equation*}
    Therefore
    \begin{equation}\label{derivataprimashape}
        \begin{split}
           &\frac{D}{dt}\Big(\left.\nabla_{E(t)}\N(t,\cdot)\right|_{\beta(t)}\Big)=\dot b^j(t)\frac{\partial c^k}{\partial x_j}(t,\beta(t))\left.\frac{\partial}{\partial x_k}\right|_{\beta(t)}+b^j(t)\frac{\partial^2 c^k}{\partial t\partial x_j}(t,\beta(t))\left.\frac{\partial}{\partial x_k}\right|_{\beta(t)}\\
           &\quad+a^i(t,\beta(t))b^j(t)\frac{\partial ^2c^k}{\partial x_i\partial x_j}(t,\beta(t))\left.\frac{\partial}{\partial x_k}\right|_{\beta(t)}+a^i(t,\beta(t)) b^j(t)\frac{\partial c^k}{\partial x_j}(t,\beta(t))\Gamma_{ik}^l(\beta(t))\left.\frac{\partial}{\partial x_l}\right|_{\beta(t)}\\
           &\quad+\dot b^j(t)c^k(t,\beta(t))\Gamma_{jk}^l(\beta(t))\left.\frac{\partial}{\partial x_l}\right|_{\beta(t)}+b^j(t)\frac{\partial c^k}{\partial t}(t,\beta(t))\Gamma_{jk}^l(\beta(t))\left.\frac{\partial}{\partial x_l}\right|_{\beta(t)}\\
           &\quad+a^i(t,\beta(t))b^j(t)\frac{\partial c^k}{\partial x_i}(t,\beta(t))\Gamma_{jk}^l(\beta(t))\left.\frac{\partial}{\partial x_l}\right|_{\beta(t)}+a^i(t,\beta(t))b^j(t)c^k(t,\beta(t))\frac{\partial\Gamma_{jk}^l}{\partial x_i}(\beta(t))\left.\frac{\partial}{\partial x_l}\right|_{\beta(t)}\\
           &\quad+a^i(t,\beta(t))b^j(t)c^k(t,\beta(t))\Gamma_{jk}^l(\beta(t))\Gamma_{i l}^m(\beta(t))\left.\frac{\partial}{\partial x_m}\right|_{\beta(t)}.
        \end{split}
    \end{equation}
    In particular, by \eqref{normcoordcristo},
      \begin{equation*}
        \begin{split}
           \left.\frac{D}{dt}\right|_{t=0}\Big(\left.\nabla_{E(t)}\N(t,\cdot)\right|_{\beta(t)}\Big)&=\dot b^j(0)\frac{\partial N^k}{\partial x_j}(p)\left.\frac{\partial}{\partial x_k}\right|_{p}+e^j\frac{\partial^2 c^k}{\partial t\partial x_j}(0,p)\left.\frac{\partial}{\partial x_k}\right|_{p}        \\
&\quad+X^i(p)e^j\frac{\partial ^2N^k}{\partial x_i\partial x_j}(p)\left.\frac{\partial}{\partial x_k}\right|_{p}+X^i(p)e^jN^k(p)\frac{\partial\Gamma_{jk}^l}{\partial x_i}(p)\left.\frac{\partial}{\partial x_l}\right|_{p}.
        \end{split}
    \end{equation*}
    Observe that 
    \begin{equation}\label{comeebdotzero}
       \dot b^i(0)\left.\frac{\partial}{\partial x_i}\right|_{p}\overset{\eqref{normcoordcristo}}{=} \left.\frac{D}{dt}\right|_{t=0}E(t)\overset{\eqref{prounoaux}}{=}\nabla_e \X\overset{\eqref{normcoordcristo}}{=}\frac{\partial X^i}{\partial x_j}(p)e^j\left.\frac{\partial}{\partial x_i}\right|_p,
    \end{equation}
   whence   \eqref{protreaux} follows by
    \begin{equation*}
        \begin{split}
           \left.\frac{D}{dt}\right|_{t=0}\Big(\left.\nabla_{E(t)}\N(t,\cdot)\right|_{\beta(t)}\Big)&=\frac{\partial X^i}{\partial x_j}(p)e^j\frac{\partial N^k}{\partial x_i}(p)\left.\frac{\partial}{\partial x_k}\right|_{p}+e^j\frac{\partial^2 c^k}{\partial x_j\partial t}(0,p)\left.\frac{\partial}{\partial x_k}\right|_{p}        \\
&\quad+X^i(p)e^j\frac{\partial ^2N^k}{\partial x_i\partial x_j}(p)\left.\frac{\partial}{\partial x_k}\right|_{p}+X^i(p)e^jN^k(p)\frac{\partial\Gamma_{jk}^l}{\partial x_i}(p)\left.\frac{\partial}{\partial x_l}\right|_{p}\\
\overset{\eqref{nablatrecose}}&{=}e^j\frac{\partial^2 c^k}{\partial x_j\partial t}(0,p)\left.\frac{\partial}{\partial x_k}\right|_{p} +\nabla_e\nabla_\X \n-\R(e,\X)\n\\
\overset{\eqref{normcoordcristo}}&{=}\nabla_e\n'+\nabla_e\nabla_\X \n-\R(e,\X)\n\\
\overset{\eqref{piccolopezzodercovnorm}}&{=}\nabla_eV'-\R(e,\X)\n.
        \end{split}
    \end{equation*}  
 We prove \eqref{proquattroaux}. By \eqref{derivataprimashape} and \eqref{normcoordcristo},
    \begin{equation*}
        \begin{split}
             &\left.\frac{D^2}{dt^2}\right|_{t=0}\Big(\left.\nabla_{E(t)}\N(t,\cdot)\right|_{\beta(t)}\Big)=\ddot b^j(0)\frac{\partial N^k}{\partial x_j}(p)\left.\frac{\partial}{\partial x_k}\right|_{p}+2\dot b^j(0)\frac{\partial^2 c^k}{\partial x_j\partial t}(0,p)\left.\frac{\partial}{\partial x_k}\right|_{p}\\
             &\quad+2X^i(p)\dot b^j(0)\frac{\partial^2 N^k}{\partial x_i\partial x_j}(p)\left.\frac{\partial}{\partial x_k}\right|_{p}+e^j\frac{\partial^3 c^k}{\partial x_j\partial t^2}(0,p)\left.\frac{\partial}{\partial x_k}\right|_{p}+2X^i(p)e^j\frac{\partial^3 c^k}{\partial x_i\partial x_j\partial t}(0,p)\left.\frac{\partial}{\partial x_k}\right|_{p}\\
             &\quad +(X')^i(p)e^j\frac{\partial ^2N^k}{\partial x_i\partial x_j}(p)\left.\frac{\partial}{\partial x_k}\right|_{p}+X^l(p)\frac{\partial X^i}{\partial x_l}(p)e^j\frac{\partial ^2N^k}{\partial x_i\partial x_j}(p)\left.\frac{\partial}{\partial x_k}\right|_{p}+X^i(p)X^l(p)e^j\frac{\partial ^3N^k}{\partial x_l\partial x_i\partial x_j}(p)\left.\frac{\partial}{\partial x_k}\right|_{p}\\
             &\quad+X^i(p)X^m(p) e^j\frac{\partial N^k}{\partial x_j}(p)\frac{\partial \Gamma_{ik}^l}{\partial x_m}(p)\left.\frac{\partial}{\partial x_l}\right|_{p}+2X^i(p)\dot b^j(0)N^k(p)\frac{\partial \Gamma_{jk}^l}{\partial x_i}(p)\left.\frac{\partial}{\partial x_l}\right|_{p}\\
             &\quad +2X^i(p) e^j\frac{\partial c^k}{\partial t}(0,p)\frac{\partial \Gamma_{jk}^l}{\partial x_i}(p)\left.\frac{\partial}{\partial x_l}\right|_{p}+2X^i(p)X^m(p)e^j\frac{\partial N^k}{\partial x_i}(p)\frac{\partial \Gamma_{jk}^l}{\partial x_m}(p)\left.\frac{\partial}{\partial x_l}\right|_{p}\\
            &\quad+(X')^i(p)e^jN^k(p)\frac{\partial\Gamma_{jk}^l}{\partial x_i}(p)\left.\frac{\partial}{\partial x_l}\right|_{p}+X^m(p)\frac{\partial X^i}{\partial x_m}(p)e^jN^k(p)\frac{\partial\Gamma_{jk}^l}{\partial x_i}(p)\left.\frac{\partial}{\partial x_l}\right|_{p}\\
            &\quad+X^i(p)X^m(p)e^jN^k(p)\frac{\partial^2\Gamma_{jk}^l}{\partial x_m\partial x_i}(p)\left.\frac{\partial}{\partial x_l}\right|_{p}.
        \end{split}
    \end{equation*}
       Notice that 
       \begin{equation*}
       \begin{split}
            \left.\frac{D^2}{dt^2}\right|_{t=0}E(t)\overset{\eqref{comefarecovdev}}&{=} \left.\frac{D}{dt}\right|_{t=0}\left(\dot b^j(t)\left.\frac{\partial}{\partial x_j}\right|_{\beta(t)}+a^i(t,\beta(t))b^j(t)\Gamma_{ij}^l(\beta(t))\left.\frac{\partial}{\partial x_l}\right|_{\beta(t)}\right)\\
            \overset{\eqref{normcoordcristo}}&{=}\ddot b^j(0)\left.\frac{\partial}{\partial x_j}\right|_{p}+X^i(p)X^m(p)e^j\frac{\partial \Gamma_{ij}^l}{\partial x_m}(p)\left.\frac{\partial}{\partial x_l}\right|_{p}\\
            &=\left(\ddot b^j(0)+X^i(p)X^m(p)e^l\frac{\partial \Gamma_{il}^j}{\partial x_m}(p)\right) \left.\frac{\partial}{\partial x_j}\right|_{p}.
       \end{split}
       \end{equation*}
       Therefore, by \eqref{normcoordcristo}, \eqref{nablatrecose}, \eqref{produeaux} and for any $j=1,\ldots,n+1$,      \begin{equation}\label{dersecetcond}
       \begin{split}
              &\ddot b^j(0)=\left(\nabla_{e}\nabla_\X\X\right)^j(p)+\left(\nabla_e \X'\right)^j(p)+\left(\R(\X,e)\X\right)^j(p)-X^i(p)X^m(p)e^l\frac{\partial \Gamma_{il}^j}{\partial x_m}(p)\\
              &\quad=e^lX^i(p)X^m(p)\frac{\partial\Gamma_{lm}^j}{\partial x_i}+ e^l X^i(p)\frac{\partial^2 X^j}{\partial x_l \partial x_i}(p)+e^l\frac{\partial X^i}{\partial x_l}(p)\frac{\partial X^j}{\partial x_i}(p)+e^l\frac{\partial (X')^j}{\partial x_l}(p)-X^i(p)X^m(p)e^l\frac{\partial\Gamma_{il}^j}{\partial x_m}(p)\\
              &\quad= e^l X^i(p)\frac{\partial^2 X^j}{\partial x_l \partial x_i}(p)+e^l\frac{\partial X^i}{\partial x_l}(p)\frac{\partial X^j}{\partial x_i}(p)+e^l\frac{\partial (X')^j}{\partial x_l}(p),
       \end{split}
       \end{equation}
       where the last equality follows by the symmetry of $\nabla$. 
 Then, by  \eqref{comeebdotzero} and \eqref{dersecetcond},
    \begin{equation*}
        \begin{split}
             &\left.\frac{D^2}{dt^2}\right|_{t=0}\Big(\left.\nabla_{E(t)}\N(t,\cdot)\right|_{\beta(t)}\Big)=e^l X^i(p)\frac{\partial^2 X^j}{\partial x_l \partial x_i}(p)\frac{\partial N^k}{\partial x_j}(p)\left.\frac{\partial}{\partial x_k}\right|_{p}+e^l\frac{\partial X^i}{\partial x_l}(p)\frac{\partial X^j}{\partial x_i}(p)\frac{\partial N^k}{\partial x_j}(p)\left.\frac{\partial}{\partial x_k}\right|_{p}\\
             &\quad+e^l\frac{\partial (X')^j}{\partial x_l}(p)\frac{\partial N^k}{\partial x_j}(p)\left.\frac{\partial}{\partial x_k}\right|_{p}+2\frac{\partial X^i}{\partial x_j}(p)e^j\frac{\partial^2 c^k}{\partial x_i\partial t}(0,p)\left.\frac{\partial}{\partial x_k}\right|_{p}\\
             &\quad+2X^i(p)\frac{\partial X^m}{\partial x_j}(p)e^j\frac{\partial^2 N^k}{\partial x_i\partial x_m}(p)\left.\frac{\partial}{\partial x_k}\right|_{p}+e^j\frac{\partial^3 c^k}{\partial x_j\partial t^2}(0,p)\left.\frac{\partial}{\partial x_k}\right|_{p}+2X^i(p)e^j\frac{\partial^3 c^k}{\partial x_i\partial x_j\partial t}(0,p)\left.\frac{\partial}{\partial x_k}\right|_{p}\\
             &\quad +(X')^i(p)e^j\frac{\partial ^2N^k}{\partial x_i\partial x_j}(p)\left.\frac{\partial}{\partial x_k}\right|_{p}+X^l(p)\frac{\partial X^i}{\partial x_l}(p)e^j\frac{\partial ^2N^k}{\partial x_i\partial x_j}(p)\left.\frac{\partial}{\partial x_k}\right|_{p}+X^i(p)X^l(p)e^j\frac{\partial ^3N^k}{\partial x_l\partial x_i\partial x_j}(p)\left.\frac{\partial}{\partial x_k}\right|_{p}\\
             &\quad+X^i(p)X^m(p) e^j\frac{\partial N^k}{\partial x_j}(p)\frac{\partial \Gamma_{ik}^l}{\partial x_m}(p)\left.\frac{\partial}{\partial x_l}\right|_{p}+2X^i(p)\frac{\partial X^m}{\partial x_j}(p)e^jN^k(p)\frac{\partial \Gamma_{mk}^l}{\partial x_i}(p)\left.\frac{\partial}{\partial x_l}\right|_{p}\\
             &\quad +2X^i(p) e^j\frac{\partial c^k}{\partial t}(0,p)\frac{\partial \Gamma_{jk}^l}{\partial x_i}(p)\left.\frac{\partial}{\partial x_l}\right|_{p}+2X^i(p)X^m(p)e^j\frac{\partial N^k}{\partial x_i}(p)\frac{\partial \Gamma_{jk}^l}{\partial x_m}(p)\left.\frac{\partial}{\partial x_l}\right|_{p}\\
            &\quad+(X')^i(p)e^jN^k(p)\frac{\partial\Gamma_{jk}^l}{\partial x_i}(p)\left.\frac{\partial}{\partial x_l}\right|_{p}+X^m(p)\frac{\partial X^i}{\partial x_m}(p)e^jN^k(p)\frac{\partial\Gamma_{jk}^l}{\partial x_i}(p)\left.\frac{\partial}{\partial x_l}\right|_{p}\\
            &\quad+X^i(p)X^m(p)e^jN^k(p)\frac{\partial^2\Gamma_{jk}^l}{\partial x_m\partial x_i}(p)\left.\frac{\partial}{\partial x_l}\right|_{p}.
        \end{split}
    \end{equation*}
    A careful comparison between the above expression and \eqref{nablatrecose} and \eqref{nablaquattrocose} grants that
    \begin{equation*}
        \begin{split}
            \left.\frac{D^2}{dt^2}\right|_{t=0}&\Big(\left.\nabla_{E(t)}\N(t,\cdot)\right|_{\beta(t)}\Big)=\nabla _e\nabla_\X\nabla_\X \n-\left(\nabla_\X \R\right)(e,\X)\n-2 \R(e,\X)\nabla_\X \n-\R\left(e,\nabla_\X \X\right)\n\\
            &\quad-\R\left(\nabla_e\X,\X\right)\n+2\nabla_e\nabla_\X\n'-2\R(e,\X)\n'+\nabla_e \n''+\nabla_e\nabla_{\X'}\n-\R(e,\X')\n.
        \end{split}
    \end{equation*}
    In addition,
    \begin{equation*}
        -2 \R(e,\X)\n'-2\R(e,\X)\nabla_\X \n\overset{\eqref{piccolopezzodercovnorm}}{=}-2\R(e,\X)V'
    \end{equation*}
    and 
    \begin{equation*}
        \nabla_e \n''+2\nabla_e\nabla_\X \n'+\nabla_e \nabla_{\X'}\n+\nabla_e\nabla_\X\nabla_\X \n\overset{\eqref{pezzocondtquadrockappa}}{=}\nabla_e V'',
    \end{equation*}
    whence
  \begin{equation*}
        \begin{split}
            \left.\frac{D^2}{dt^2}\right|_{t=0}\Big(\left.\nabla_{E(t)}\N(t,\cdot)\right|_{\beta(t)}\Big)&=-\left(\nabla_\X \R\right)(e,\X)\n-\R\left(e,\Z\right)\n+\nabla_e V''-2\R(e,\X)V'-\R\left(\nabla_e\X,\X\right)\n,
        \end{split}
    \end{equation*}
    and \eqref{proquattroaux} follows.
\end{proof}
\Cref{lemmaevoluzioneshapeoperatoririem3858943} allows to compute the pointwise evolution of the mean curvature at first and second-order.
\begin{lemma}\label{lemmavariazioneprimacurvaturaeseconda}
    Let $p\in S$. Then 
    \begin{align}
        \left.\frac{d}{dt}\right|_{t=0}H_t\left(\Phi_t(p)\right)&=-\Delta^S\varphi-\varphi\left(|h|^2+\ric(\n,\n)\right)+ \X^T H\label{derprimhinstatement}\\   
         \left.\frac{d^2}{dt^2}\right|_{t=0}H_t\left(\Phi_t(p)\right)&=2\left\langle h,\left(\nabla^S X\right)^2\right\rangle-2\left\langle \nabla^S X,\left(\nabla^SV'\right)^t\right\rangle-\left\langle h,\j(\X,\X)\right\rangle-\left\langle\nabla^S \X,\j(\n,\X)\right\rangle\label{dersechinstatement}\\
         &\quad+\left(-\left|\nabla^S\varphi\right|^2+2h\left(\nabla^S\varphi, \X^T\right)-h^2\left( \X^T, \X^T\right)\right)H+\divv^S\left( \tilde V''\right)^T\nonumber\\
         &\quad-2\ric(\X,V')-\R\left(\X^T,\n,V',\n\right)-\left(\nabla_\X \ric\right)(\X,\n)\nonumber\\
         &\quad-\Delta^S\left \langle\Z,\n\right\rangle-\left\langle\Z,\n\right\rangle\left(|h|^2+\ric(\n,\n)\right)+\Z^T H,\nonumber
    \end{align}
    where 
    \begin{equation*}
        \left(\tilde V''\right)^T\coloneqq \sum_{j=1}^n\left(\R(\X,e_j,\n,\X^T)+2\left\langle \nabla^S\varphi- A\left( \X^T\right) ,\varphi A(e_j)+\nabla_{e_j}\ \X^T\right\rangle\right)e_j
    \end{equation*}
    In particular, if $\Phi$ is a normal variation,
    \begin{equation}\label{secvarhnorm}
        \begin{split}
           \left.\frac{d^2}{dt^2}\right|_{t=0}H_t\left(\Phi_t(p)\right)&=\divv^S \left( A\left(\nabla^S\varphi^2\right)\right)+2\varphi\divv^S A\left(\nabla^S\varphi\right)-2\varphi\left\langle \nabla^S H,\nabla^S\varphi\right\rangle-\left|\nabla^S\varphi\right|^2H\\
                &\quad+2\varphi^2\trace\left(h^3\right)-2\varphi^2\left\langle h,\j(\n,\n)\right\rangle-\varphi^2\left(\nabla_\n \ric\right)(\n,\n)\\
         &\quad-\Delta^S\left \langle\Z,\n\right\rangle-\left\langle\Z,\n\right\rangle\left(|h|^2+\ric(\n,\n)\right)+\Z^T H.
        \end{split}
    \end{equation}
\end{lemma}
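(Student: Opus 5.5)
We will write $H_t(\Phi_t(p))$ as a trace involving the pulled-back frame. With $E_i(t)=(d\Phi_t)|_p(e_i)$ and the Gram matrix $\G(t)$ from \eqref{grammatrix}, we have
\begin{equation*}
H_t(\Phi_t(p))=\left(\G(t)^{-1}\right)^{ij}\left\langle \nabla_{E_i(t)}\N(t,\cdot)|_{\beta(t)},E_j(t)\right\rangle,
\end{equation*}
since the shape operator of $S_t$ is $E\mapsto\nabla_E\N$ and its trace is computed in the basis $E_i(t)$. Set $W_i(t)=\nabla_{E_i(t)}\N(t,\cdot)|_{\beta(t)}$ and $Q_{ij}(t)=\langle W_i(t),E_j(t)\rangle$. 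We differentiate $\trace(\G^{-1}Q)$ once and twice at $t=0$, using $\G(0)=\mathrm{Id}$ and \eqref{derivatadimatriceinversa}. This gives
\begin{equation*}
H'=\trace Q'-\trace(\G'Q),\qquad H''=\trace Q''-2\trace(\G'Q')-\trace(\G''Q)+2\trace(\G'^2Q).
\end{equation*}
Here $\G'_{ij}=\langle\nabla_{e_i}\X,e_j\rangle+\langle e_i,\nabla_{e_j}\X\rangle$ by \eqref{prounoaux}, and $\G''$ is obtained from \eqref{prounoaux} and \eqref{produeaux}. The derivatives $Q',Q''$ come from the Leibniz rule, using \eqref{protreaux}, \eqref{proquattroaux} for $W$ and \eqref{prounoaux}, \eqref{produeaux} for $E$.

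For the first variation, $\trace Q'=\sum_i\langle\nabla_{e_i}V'+\R(\X,e_i)\n,e_i\rangle+\langle A e_i,\nabla_{e_i}\X\rangle$. Inserting $V'=-\nabla^S\varphi+A(\X^T)$ from \eqref{lemma125ritore}, the terms $\langle A(e_i),\nabla_{e_i}\X\rangle$ cancel partially against $\trace(\G'Q)=2\langle h,\nabla^S\X\rangle$ (symmetrized). This leaves $-\Delta^S\varphi+\divv^S A(\X^T)-\langle h,\nabla^S\X\rangle-\ric(\X,\n)$, with a small correction from the normal part of $\nabla_{e_i}V'$, which involves $\langle V',\n\rangle=0$. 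Applying \eqref{corollarioditracedcodazzi} turns this into \eqref{derprimhinstatement}.

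For the second variation we collect terms in the same way. First, $\trace Q''$ contains $\langle\nabla_{e_i}V'',e_i\rangle$. Splitting $V''=\langle V'',\n\rangle\n+(V'')^T$ via \eqref{secondordernormalbehave}, the normal coefficient times $H$ gives the $(-|\nabla^S\varphi|^2+\dots)H$ term. The tangential part gives a divergence. Inside it, the $\Z$-contribution $-\sum_j\langle\n,\nabla_{e_j}\Z\rangle e_j$ is rewritten as $-\nabla^S\langle\Z,\n\rangle+A(\Z^T)$, and what remains defines $(\tilde V'')^T$; the curvature term is rearranged by the symmetries of $\R$. The $\Z$ pieces together with $\R(\Z,e)\n$ and the $\nabla_{e}\Z$ terms from $\G''$ and $Q''$ reproduce exactly the first-variation structure with $\X$ replaced by $\Z$. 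This justifies the last line $-\Delta^S\langle\Z,\n\rangle-\langle\Z,\n\rangle(|h|^2+\ric(\n,\n))+\Z^TH$, again by \eqref{corollarioditracedcodazzi}.

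The remaining terms are the following:
\begin{itemize}
\item $2\langle\nabla_{e_i}V',\nabla_{e_i}\X\rangle$-type cross terms, giving $-2\langle\nabla^S\X,(\nabla^SV')^t\rangle$;
\item the quadratic $\G'$ terms, giving $2\langle h,(\nabla^S\X)^2\rangle$;
\item curvature terms from \eqref{proquattroaux}, where traces of $2\R(\X,e_i)V'$ and $(\nabla_\X\R)(\X,e_i)\n$ produce $-2\ric(\X,V')$, $-\R(\X^T,\n,V',\n)$ and $-(\nabla_\X\ric)(\X,\n)$ by the identities recalled in the preliminaries;
\item the terms $\R(\X,\nabla_{e}\X)\n$ and $\R(\X,e_i)\X$ paired with $A$, giving $-\langle h,\j(\X,\X)\rangle-\langle\nabla^S\X,\j(\n,\X)\rangle$.
\end{itemize}

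The main obstacle is this bookkeeping. One must track which normal components of $\nabla_{e_i}\X$, $V'$ and $V''$ survive when paired with $E_j$, and verify that the several occurrences of $\langle h,\nabla^S\X\rangle$ combine with the right coefficients. I would carry this out at $p$ in a frame where $e_i$ diagonalizes $h$, checking each group separately.

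Finally, for a normal variation we put $\X=\varphi\n$, $\X^T=0$, $V'=-\nabla^S\varphi$ and $\nabla^S\X=\varphi h$. Then:
\begin{itemize}
\item $(\nabla^S\X)^2$ gives $2\varphi^2\trace h^3$;
\item $\langle\nabla^S\X,(\nabla^SV')^t\rangle=-\varphi\langle h,\hess^S\varphi\rangle$, which is rewritten as $\varphi\divv^SA(\nabla^S\varphi)-\varphi\langle\nabla^SH,\nabla^S\varphi\rangle$ up to Ricci terms via \eqref{tracedcodazzi};
\item $(\tilde V'')^T=2\varphi A(\nabla^S\varphi)=A(\nabla^S\varphi^2)$;
\item the curvature terms reduce to $-2\varphi^2\langle h,\j(\n,\n)\rangle-\varphi^2(\nabla_\n\ric)(\n,\n)$, the $\ric(\n,\nabla^S\varphi)$ terms cancelling.
\end{itemize}
This yields \eqref{secvarhnorm}.
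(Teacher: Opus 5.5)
Your proposal follows the paper's proof: the trace formula $H_t(\Phi_t(p))=\trace\left(\G(t)^{-1}Q(t)\right)$, the same four-term expansion of the second derivative fed by the pointwise lemmas for $E$, $V$ and $\nabla_E\N$, and the splitting of $V''$ into a normal part (giving the $H$-term) and a tangential part (giving $-\Delta^S\langle\Z,\n\rangle+\divv^SA(\Z^T)+\divv^S(\tilde V'')^T$). You then use \eqref{corollarioditracedcodazzi} to collapse the $\Z$-terms and, in the normal case, the combination $2\varphi\langle h,\hess^S\varphi\rangle+2\varphi\ric(\n,\nabla^S\varphi)$. The only deviation is minor: for \eqref{derprimhinstatement} you apply \eqref{corollarioditracedcodazzi} to $\X$ directly, whereas the paper identifies $-\langle h,\sym(\nabla^S\X^T)\rangle+\divv^SA(\X^T)-\ric(\X^T,\n)$ with $\X^TH$ by rerunning the computation along the flow of $\X^T$; both work. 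In the bookkeeping, note that $-\R(\X^T,\n,V',\n)$ also needs the normal component of $\nabla_{e_i}\X$ inside $\R(\X,\nabla_{e_i}\X)\n$, and that the two $\pm 2h(V',V')$ contributions cancel.
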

\begin{proof}
Formula \eqref{derprimhinstatement} is well-known (cf. \cite[Lemma 1.26]{MR4676392}). We prove it for the sake of completeness. 
    Let $e_1,\ldots,e_n$ be any orthonormal basis of $T_p S$. 
    For $j=1,\ldots,n$, let $E_j(t)$ be as in \eqref{estensionevettore}. Define $\S(t)$ by
    \begin{equation*}
       \S(t)_{ij}=\left\langle\left.\nabla_{E_i(t)}\N(t,\cdot)\right|_{\beta(t)},E_j(t)\right\rangle,\qquad i,j=1,\ldots,n.
    \end{equation*}
    Then $\S(t)$ is symmetric, and $       H(\Phi_t(p))=\trace\left(\G(t)^{-1}\S(t)\right),$
   where $\G$ is defined in \eqref{grammatrix}. 
   Recalling \eqref{derivatadimatriceinversa},
    \begin{equation*}
        \frac{d}{dt}H_t(\Phi_t(p))=\trace\left(-\G(t)^{-1}\frac{d\G(t)}{dt}\G(t)^{-1}\S(t)+\G(t)^{-1}\frac{d\S(t)}{dt}\right).
    \end{equation*}
Since $e_1,\ldots,e_n$ is orthonormal, then $\G(0)_{ij}=\delta_{ij}$ for $i,j=1,\ldots,n$, whence
\begin{align}
     \left.\frac{d}{dt}\right|_{t=0}H_t(\Phi_t(p))&=\trace\left(-\left(\left.\frac{d}{dt}\right|_{t=0}\G(t)\right)\S(0)+\left.\frac{d}{dt}\right|_{t=0}\S(t)\right),\label{unatracciaderiprimh}\\
      \left.\frac{d^2}{dt^2}\right|_{t=0}H_t(\Phi_t(p))&=\underbrace{\trace\left(2\left(\left.\frac{d}{dt}\right|_{t=0}\G(t)\right)^2\S(0)\right)}_{\mathrm{I}}+\underbrace{\trace\left(-\left(\left.\frac{d^2}{dt^2}\right|_{t=0}\G(t)\right)\S(0)\right)}_{\mathrm{II}}\label{quattrotraccedersech}\\
      &\quad+\underbrace{\trace\left(-2\left(\left.\frac{d}{dt}\right|_{t=0}\G(t)\right)\left(\left.\frac{d}{dt}\right|_{t=0}\S(t)\right)\right)}_{\mathrm{III}}+\underbrace{\trace\left(\left.\frac{d^2}{dt^2}\right|_{t=0}\S(t)\right)}_{\mathrm{IV}}\nonumber.
\end{align}
First, by \eqref{unatracciaderiprimh},
\begin{equation*}
    \begin{split}
         &\left.\frac{d}{dt}\right|_{t=0}H_t(\Phi_t(p))=-\sum_{i,j=1}^n\left(\left.\frac{d}{dt}\right|_{t=0}\G(t)_{ij}\right)\S(0)_{ij}+\sum_{i=1}^n\left.\frac{d}{dt}\right|_{t=0}\S(t)_{ii}\\
         &\qquad\overset{\eqref{prounoaux},\eqref{protreaux}}{=}-\sum_{i,j=1}^nh_{ij}\left(\left\langle\nabla_{e_i} \X,e_j\right\rangle+\left\langle \nabla_{e_j} \X,e_i\right\rangle\right)+\sum_{i=1}^n\left\langle \nabla_{e_i}V',e_i\right\rangle-\ric(\X,\n)+\sum_{i=1}^n\left\langle\nabla_{e_i}\n ,\nabla_{e_i}\X\right\rangle.
    \end{split}
\end{equation*}
Notice that, for $i,j=1,\ldots,n,$
\begin{equation}\label{pezzoconsyminproof}
    \begin{split}
        \left\langle\nabla_{e_i} \X,e_j\right\rangle+\left\langle \nabla_{e_j} \X,e_i\right\rangle&=2\varphi h_{ij}+ \left\langle\nabla_{e_i}  \X^T,e_j\right\rangle+\left\langle \nabla_{e_j}  \X^T,e_i\right\rangle=2\varphi h_{ij}+2\sym\left(\nabla^S  \X^T\right)_{ij}
    \end{split}
\end{equation}
and
\begin{equation}\label{auxprimadiprg}
    \left\langle\nabla_{e_i}V',e_j\right\rangle\overset{\eqref{lemma125ritore}}{=}-\left(\hess^S\varphi\right)_{ij}+\left\langle \nabla_{e_i}A\left( \X^T\right),e_j\right\rangle.
\end{equation}
Moreover,
\begin{equation}\label{solopernecessitàdisbatti}
    \begin{split}
        \sum_{i=1}^n\left\langle\nabla_{e_i}\n ,\nabla_{e_i}\X\right\rangle=\varphi\sum_{i=1}^n\left\langle\nabla_{e_i}\n ,\nabla_{e_i}\n \right\rangle+\sum_{i=1}^n\left\langle\nabla_{e_i}\n ,\nabla_{e_i} \X^T\right\rangle=\varphi|h|^2+\left\langle h,\sym\left(\nabla^S  \X^T\right)\right\rangle
    \end{split}
\end{equation}
Therefore, by \eqref{pezzoconsyminproof}, \eqref{auxprimadiprg} and \eqref{solopernecessitàdisbatti},
\begin{equation*}
     \left.\frac{d}{dt}\right|_{t=0}H_t(\Phi_t(p))=-\Delta^S\varphi-\varphi\left(|h|^2+\ric(\n,\n)\right)-\left\langle h,\sym\left(\nabla^S  \X^T\right)\right\rangle+\divv^SA\left( \X^T\right)-\ric( \X^T,N).
\end{equation*}
Next, extend $ \X^T$ to a smooth vector field with compact support in $M$. Denote by $\Psi_t:M\to M$ its flow. Then $\Psi$ is a variation of $S$ with velocity $ \X^T$. By the above formula,
\begin{equation*}
     \X^T H(p)=  \left.\frac{d}{dt}\right|_{t=0}H_t(\Psi_t(p))=-\left\langle h,\sym\left(\nabla^S  \X^T\right)\right\rangle+\divv^S A\left( \X^T\right)-\ric( \X^T,\n).
\end{equation*}
Therefore, \eqref{derprimhinstatement} follows.
Next, we prove \eqref{dersechinstatement}. First we compute $\mathrm{I}$. Indeed,
\begin{equation*}
    \begin{split}
        \mathrm{I}&=2\sum_{i,j,k=1}^n\left(\left.\frac{d}{dt}\right|_{t=0}\G(t)_{ik}\right)\left(\left.\frac{d}{dt}\right|_{t=0}\G(t)_{jk}\right)\S(0)_{ij}\\
        \overset{\eqref{prounoaux}}&{=}2\sum_{i,j,k=1}^nh_{ij}\left(\left\langle\nabla_{e_i}\X,e_k\right\rangle+\left\langle\nabla_{e_k}\X,e_i\right\rangle\right)\left( \left\langle\nabla_{e_j}\X,e_k\right\rangle+\left\langle\nabla_{e_k}\X,e_j\right\rangle\right).
    \end{split}
\end{equation*}
Next we compute $\mathrm{II}$. Noticing that
\begin{equation}\label{componentidivprimoinproof}
    \left\langle\nabla_{e_i}\X,\n\right\rangle=e_i(\varphi)+\left\langle\nabla_{e_i}\X^T,\n\right\rangle=e_i(\varphi)-\left\langle\nabla_{e_i}\n,\X^T\right\rangle=-\left\langle V',e_i\right\rangle\qquad i=1,\ldots,n,
\end{equation}
we deduce that
\begin{equation*}
    \begin{split}
        \mathrm{II}&=-\sum_{i,j=1}^n\left(\left.\frac{d^2}{dt^2}\right|_{t=0}\G(t)_{ij}\right)\S(0)_{ij}\\
        &=-\sum_{i,j=1}^n h_{ij}\left.\frac{d^2}{dt^2}\right|_{t=0}\langle E_i(t),E_j(t)\rangle\\
        &=-2\sum_{i,j=1}^n h_{ij}\left.\frac{d}{dt}\right|_{t=0}\left\langle \frac{D}{dt}E_i(t),E_j(t)\right\rangle\\
        &=-2\sum_{i,j=1}^n h_{ij}\left\langle \left.\frac{D^2}{dt^2}\right|_{t=0}E_i(t),E_j(t)\right\rangle -2\sum_{i,j=1}^n h_{ij}\left\langle \left.\frac{D}{dt}\right|_{t=0}E_i(t),\left.\frac{D}{dt}\right|_{t=0}E_j(t)\right\rangle\\
        \overset{\eqref{prounoaux},\eqref{produeaux}}&{=}-2\sum_{i,j=1}^n h_{ij}\left\langle \nabla_{e_i}\Z,e_j\right\rangle-2\sum_{i,j=1}^n h_{ij}\R(\X,e_i,\X,e_j)-2\sum_{i,j=1}^n h_{ij}\left\langle\nabla_{e_i}\X,\nabla_{e_j}\X\right\rangle\\
        &=-2\left\langle h,\nabla^S\Z\right\rangle-2\left\langle h,\j(\X,\X)\right\rangle-2\sum_{i,j=1}^n h_{ij}\left\langle \nabla_{e_i}\X,e_k\right\rangle\left\langle \nabla_{e_j}\X,e_k\right\rangle-2\sum_{i,j=1}^n h_{ij}\left\langle \nabla_{e_i}\X,\n\right\rangle\left\langle \nabla_{e_j}\X,\n\right\rangle\\
        \overset{\eqref{componentidivprimoinproof}}&{=}-2\left\langle h,\nabla^S\Z\right\rangle-2\left\langle h,\j(\X,\X)\right\rangle-2\sum_{i,j=1}^n h_{ij}\left\langle \nabla_{e_i}\X,e_k\right\rangle\left\langle \nabla_{e_j}\X,e_k\right\rangle-2h\left(V',V'\right).\\
    \end{split}
\end{equation*}
We compute $\mathrm{III}$. To this aim,
\begin{equation*}
    \begin{split}
        \mathrm{III}&=-2\sum_{i,j=1}^n\left(\left.\frac{d}{dt}\right|_{t=0}\G(t)_{ij}\right)\left(\left.\frac{d}{dt}\right|_{t=0}\S(t)_{ij}\right)\\
        \overset{\eqref{prounoaux},\eqref{protreaux}}&{=}-2\sum_{i,j=1}^n\left(\left\langle\nabla_{e_i}\X,e_j\right\rangle+\left\langle \nabla_{e_j}\X,e_i\right\rangle\right)\left(\left\langle\nabla_{e_i}V'+\R(\X,e_i)\n,e_j\right\rangle+\left\langle\nabla_{e_i}\n ,\nabla_{e_j}\X\right\rangle\right)\\
        &=-2\sum_{i,j=1}^n\left(\left\langle\nabla_{e_i}\X,e_j\right\rangle+\left\langle \nabla_{e_j}\X,e_i\right\rangle\right)\left\langle\nabla_{e_i}V',e_j\right\rangle-2\sum_{i,j,k=1}^nh_{ij}\left(\left\langle\nabla_{e_i}\X,e_k\right\rangle+\left\langle \nabla_{e_k}\X,e_i\right\rangle\right)\left\langle\nabla_{e_k}\X,e_j\right\rangle\\
       &\quad-2\left\langle \nabla^S \X,\j(\X,\n)\right\rangle-2\left\langle \nabla^S \X,\j(\n,\X)\right\rangle.
    \end{split}
\end{equation*}
Finally, we compute $\mathrm{IV}$. Indeed, by \eqref{produeaux}, \eqref{protreaux} and \eqref{proquattroaux}, 
\begin{equation*}
    \begin{split}
        \mathrm{IV}&=\sum_{i=1}^n\left.\frac{d^2}{dt^2}\right|_{t=0}\S(t)_{ii}\\
        &=\underbrace{\divv^S V''}_{\mathrm{IV.1}}+\underbrace{\sum_{i=1}^n\left\langle \R\left(\X,\nabla_{e_i}\X\right)\n+2\R(\X,e_i)V'+\R\left(\Z,e_i\right)\n+\left(\nabla_\X \R\right)(\X,e_i)\n,e_i\right\rangle}_{\mathrm{IV.2}}\\
        &\quad+\underbrace{2\sum_{i=1}^n\left\langle  \nabla_{e_i}V'+\R(\X,e_i)\n, \nabla_{e_i}\X\right\rangle}_{\mathrm{IV.3}}+\underbrace{\sum_{i=1}^n\left\langle \nabla_{e_i}\n , \nabla_{e_i}\Z+\R(\X,e_i)\X\right\rangle}_{\mathrm{IV.4}}.
    \end{split}
\end{equation*}
First, 
\begin{equation*}
    \begin{split}
        \mathrm{IV.1}\overset{\eqref{secondordernormalbehave}}&{=}\left(-\left|\nabla^S\varphi\right|^2+2h\left(\nabla^S\varphi, \X^T\right)-h^2\left( \X^T, \X^T\right)\right)H+\divv^S\left( V''\right)^T.
    \end{split}
\end{equation*}
Moreover,
\begin{equation*}
    \begin{split}
        \mathrm{IV.2}&= \sum_{i,j=1}^n\left\langle\nabla_{e_i}\X,e_j\right\rangle \R(\X,e_j,\n,e_i)+\sum_{i=1}^n\left\langle\nabla_{e_i}\X,\n\right\rangle \R(\X,\n,\n,e_i)\\
        &\quad-2\ric(\X,V')-2 \R(\X,\n,V',\n)-\ric(\Z,\n)-\left(\nabla_\X \ric\right)(\X,\n)\\
        \overset{\eqref{componentidivprimoinproof}}&{=}\left \langle\nabla^S \X,\j(\n,\X)\right\rangle-2\ric(\X,V')- \R(\X^T,\n,V',\n)-\ric(\Z,\n)-\left(\nabla_\X \ric\right)(\X,\n).
    \end{split}
\end{equation*}
In addition,
\begin{equation*}
    \begin{split}
        \mathrm{IV.3}&=2\sum_{i,j=1}^n\left\langle\nabla_{e_i}V',e_j\right\rangle\left\langle\nabla_{e_i}\X,e_j\right\rangle+2\sum_{i=1}^n\left\langle\nabla_{e_i}V',\n\right\rangle\left\langle\nabla_{e_i}\X,\n\right\rangle+2\sum_{i,j=1}^n\R(\X,e_i,\n,e_j)\left\langle\nabla_{e_i}\X,e_j\right\rangle\\
        \overset{\eqref{componentidivprimoinproof}}&{=}2\sum_{i,j=1}^n\left\langle\nabla_{e_i}V',e_j\right\rangle\left\langle\nabla_{e_i}\X,e_j\right\rangle+2h\left(V',V'\right)+2\left \langle\nabla^S X,\j(\X,\n)\right\rangle
    \end{split}
\end{equation*}
Finally,
\begin{equation*}
    \begin{split}
        \mathrm{IV.4}&=\sum_{i,j=1}^nh_{ij}\left\langle \nabla_{e_i}\Z,e_j\right\rangle+\sum_{i,j=1}^nh_{ij}\R(\X,e_i,\X,e_j)=\left\langle h,\nabla^S\Z\right\rangle+\left\langle h,\j(\X,\X)\right\rangle.
    \end{split}
\end{equation*}
Recalling \eqref{quattrotraccedersech} and combining the above computations,
\begin{equation*}
    \begin{split}
        \left.\frac{d^2}{dt^2}\right|_{t=0}H_t\left(\Phi_t(p)\right)&=2\left\langle h,\left(\nabla^S X\right)^2\right\rangle-2\left\langle \nabla^S X,\left(\nabla^SV'\right)^t\right\rangle-\left\langle h,\j(\X,\X)\right\rangle-\left\langle\nabla^S \X,\j(\n,\X)\right\rangle\\
         &\quad+\left(-\left|\nabla^S\varphi\right|^2+2h\left(\nabla^S\varphi, \X^T\right)-h^2\left( \X^T, \X^T\right)\right)H+\divv^S\left( V''\right)^T\\
         &\quad-2\ric(\X,V')-\R\left(\X^T,\n,V',\n\right)-\left(\nabla_\X \ric\right)(\X,\n)-\left\langle h,\nabla^S \Z\right\rangle-\ric(\Z,\n).
    \end{split}
\end{equation*}
 Next, fix $p\in S$ and a geodesic frame $\E_1,\ldots,\E_n$ at $p$. Recall that 
    \begin{equation*}
        \left(V''\right)^T=-\sum_{i=1}^n\left \langle \n ,\nabla_{\E_i}\Z\right\rangle\E_i+\left(\tilde V''\right)^T.
    \end{equation*}
    Therefore
    \begin{equation*}
        \begin{split}
            \divv^S\left(V''\right)^T&=-\sum_{i=1}^n\E_i\left \langle \n ,\nabla_{\E_i}\Z\right\rangle+\divv^S \left(\tilde V''\right)^T\\
            &=-\sum_{i=1}^n\E_i\left \langle \n ,\nabla_{\E_i}\left(\left\langle\Z,\n\right\rangle\n\right)\right\rangle-\sum_{i=1}^n\E_i\left \langle \n ,\nabla_{\E_i}\Z^T\right\rangle+\divv^S \left(\tilde V''\right)^T\\
            &=-\sum_{i=1}^n\E_i\left(\E_i\left \langle\Z,\n\right\rangle\right)+\sum_{i=1}^n\E_i\left \langle A\left(\Z^T\right) ,\E_i\right\rangle+\divv^S \left(\tilde V''\right)^T\\
            &=-\Delta^S\left \langle\Z,\n\right\rangle+\divv^S A\left(\Z^T\right)+\divv^S \left(\tilde V''\right)^T.
        \end{split}
    \end{equation*}
Moreover, 
      \begin{equation*}
        \begin{split}
            -\left\langle h,\nabla^S \Z\right\rangle-\ric(\Z,\n)\overset{\eqref{corollarioditracedcodazzi}}&{=}-\divv^S A\left(\Z^T\right)+\Z^T H-\left\langle\Z,\n\right\rangle\left(|h|^2+\ric(\n,\n)\right).
        \end{split}
    \end{equation*}
    Therefore
\eqref{dersechinstatement} follows. Finally, assume that $\Phi$ is normal. Then $\X=\varphi\n$, $\X^T=0$ and $V'=-\nabla^S\varphi$, so that
\begin{equation*}
    \begin{split}
         \left.\frac{d^2}{dt^2}\right|_{t=0}&H_t\left(\Phi_t(p)\right)=2\left\langle h,\left(\nabla^S \left(\varphi\n\right)\right)^2\right\rangle+2\left\langle \nabla^S \left(\varphi\n\right),\left(\nabla^S\left(\nabla^S\varphi\right)\right)^t\right\rangle-\varphi^2\left\langle h,\j(\n,\n)\right\rangle\\
         &\quad-\varphi\left\langle\nabla^S \left(\varphi\n\right),\j(\n,\n)\right\rangle-\left|\nabla^S\varphi\right|^2H+\divv^S\left( \tilde V''\right)^T+2\varphi\ric(\n,\nabla^S\varphi)-\varphi^2\left(\nabla_\n \ric\right)(\n,\n)\\
         &\quad -\Delta^S\left \langle\Z,\n\right\rangle-\left\langle\Z,\n\right\rangle\left(|h|^2+\ric(\n,\n)\right)+\Z^T H.
    \end{split}
\end{equation*}
Noticing that $\nabla^S\left(\varphi\n\right)=\varphi h$ and $\left(\nabla^S\left(\nabla^S\varphi\right)\right)^t=\hess^S\varphi$, we deduce that
\begin{equation*}
        \begin{split}
                \left.\frac{d^2}{dt^2}\right|_{t=0}&H_t\left(\Phi_t(p)\right)=2\varphi^2\trace\left(h^3\right)+2\varphi\left\langle h,\hess^S\varphi\right\rangle-2\varphi^2\left\langle h,\j(\n,\n)\right\rangle-\left|\nabla^S\varphi\right|^2H+\divv^S\left( \tilde V''\right)^T\\
         &\quad+2\varphi\ric\left(\n,\nabla^S\varphi\right)-\varphi^2\left(\nabla_\n \ric\right)(\n,\n)-\Delta^S\left \langle\Z,\n\right\rangle-\left\langle\Z,\n\right\rangle\left(|h|^2+\ric(\n,\n)\right)+\Z^T H.
        \end{split}
    \end{equation*}
    Moreover, 
    \begin{equation*}
    \begin{split}
          2\varphi\left\langle h,\hess^S\varphi\right\rangle+2\varphi\ric\left(\n,\nabla^S\varphi\right)\overset{\eqref{corollarioditracedcodazzi}}&{=}2\varphi\divv^S A\left(\nabla^S\varphi\right)-2\varphi\left\langle \nabla^S H,\nabla^S\varphi\right\rangle.
    \end{split}
    \end{equation*}
    Finally, since
    \begin{equation*}
        \left(\tilde V''\right)^T=2\sum_{i=1}^n\left\langle \nabla^S\varphi,\varphi A(e_i)\right\rangle e_i=\sum_{i=1}^n\left\langle \nabla^S\left(\varphi^2\right), A(e_i)\right\rangle e_i=\sum_{i=1}^n\left\langle  A\left(\nabla^S\left(\varphi^2\right)\right), e_i\right\rangle e_i=A\left(\nabla^S\left(\varphi^2\right)\right),
    \end{equation*}
    \eqref{secvarhnorm} follows.
\end{proof}
\subsection{Variation formulas}
 If 
$f:\rr\to\rr$ is smooth in a neighborhood of $\left\{H(p)\,:\,p\in S\right\}$, set
\begin{equation}\label{genfunriemappend}
    \tmc_f(S)=\int_S f(H)\,dS.
\end{equation}
Let $\Phi$ be a variation. The area formula implies that 
\begin{equation}\label{areaformulaapplicata}
     \tmc_f\left(\Phi_t(S)\right)=\int_S f\left(H_t\left(\Phi_t(p)\right)\right)\jac\left(\Phi_t|_{S}\right)\left(p\right)\,dS.
\end{equation}
Denote by $\jacobi$ the \emph{Jacobi operator} associated to $S$, 
\begin{equation}\label{def_jacobi_operator}
    \jacobi \varphi=-\Delta^S\varphi-\varphi\left(|h|^2+\ric(\n,\n)\right),\qquad\varphi\in C^\infty(S).
\end{equation}
Recall that $\jacobi$ is self-adjoint, namely
\begin{equation}\label{jacobiselfadjoint}
    \int_S\varphi\jacobi\psi\,dS=\int_S\psi\jacobi\varphi\,dS,\qquad\varphi,\psi\in C^\infty(S).
\end{equation}
The first and second variation formulas for \eqref{genfunriemappend} read as follows.
\begin{theorem}\label{maintheoremappendixA}
    Let $S\subseteq M$ be a smooth, embedded, closed, two-sided hypersurface. Fix a function $f:\rr\to\rr$ which is smooth in a neighborhood of $\left\{H(p)\,:\,p\in S\right\}$. Let $\Phi$ be a variation. Denote by $\X$ and $\Z$ its velocity and acceleration respectively. On $S$, decompose $\X$ as $\X|_S=\varphi \n+\X^T$. Set 
        \begin{equation*}
 \delta\tmc_f(S)[\Phi]\coloneqq \left.\frac{d}{d t}\right|_{t=0}\tmc_f\left(\Phi_t(S)\right),\qquad   \delta^2\tmc_f(S)[\Phi]\coloneqq \left.\frac{d^2}{dt^2}\right|_{t=0}\tmc_f\left(\Phi_t(S)\right).
\end{equation*}     
    Then 
    \begin{align}
        \delta\tmc_f(S)[\varphi]\coloneqq  \delta\tmc_f(S)[\Phi]&=\int_S\varphi\Big(-\Delta^S f'(H)-f'(H)\left(|h|^2+\ric(\n,\n)\right)+ f(H) H\Big)\,dS,\label{primavarfunzgen}\\
        \delta^2\tmc_f(S)[\Phi]&=\delta\tmc_f(S)\left[\left\langle\Z|_S,\n\right\rangle-2\X^T\varphi+h\left(\X^T,\X^T\right)\right]+\int_S\varphi\,\mathcal L\varphi\,dS\label{variazsecondanormalemaarbitrariaperilresto},
    \end{align}
    where
    \begin{equation*}
        \begin{split}
            \mathcal L\varphi&=\jacobi\left(f''(H)\jacobi\varphi\right)+2f'(H)\divv^S A\left(\nabla^S\varphi\right)-\left(f'(H)H+f(H)\right)\Delta^S\varphi\\
            &\quad-2 f''(H)\left\langle\nabla^S H,A\left(\nabla^S\varphi\right)\right\rangle+\left(f''(H)H-2f'(H)\right)\left\langle \nabla^S H,\nabla^S\varphi\right\rangle\\
            &\quad+\varphi f'(H)\left(2\trace\left(h^3\right)-2\left\langle h,\j(\n,\n)\right\rangle-\left(\nabla_\n \ric\right)(\n,\n)\right)\\
            &\quad+\varphi \left ( f(H)H^2-\left(2f'(H) H+f(H)\right)\left(|h|^2+ \ric(\n,\n)\right)\right).
        \end{split}
    \end{equation*}
\end{theorem}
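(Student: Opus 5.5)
Differentiate the area formula \eqref{areaformulaapplicata} under the integral sign. Write $H_t:=H_t(\Phi_t(p))$ and $J_t:=\jac(\Phi_t|_S)(p)$; recall $J_0=1$. Then
\begin{equation*}
\frac{d}{dt}\Big|_{t=0}\bigl(f(H_t)J_t\bigr)=f'(H)\dot H+f(H)\dot J,
\end{equation*}
\begin{equation*}
\frac{d^2}{dt^2}\Big|_{t=0}\bigl(f(H_t)J_t\bigr)=f''(H)\dot H^2+f'(H)\ddot H+2f'(H)\dot H\dot J+f(H)\ddot J.
\end{equation*}

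\textbf{First variation.} Insert \eqref{derprimhinstatement} and \eqref{derijacuno}. This gives $f'(H)\left(\jacobi\varphi+\X^TH\right)+f(H)\left(\varphi H+\divv^S\X^T\right)$. The tangential terms combine into $\divv^S\!\left(f(H)\X^T\right)$, which integrates to zero on the closed surface $S$. Moving $\jacobi$ onto $f'(H)$ via \eqref{jacobiselfadjoint} yields \eqref{primavarfunzgen}.

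\textbf{Second variation: reduction to normal variations.} Rather than expanding the fully general formulas \eqref{derijacdue} and \eqref{dersechinstatement}, I would reduce to normal variations. The quantity $\delta^2\tmc_f(S)[\Phi]$ depends only on the $2$-jet in $t$ of the curve of hypersurfaces $S_t$. A standard reparametrization argument (compose $\Phi_t$ with a tangential diffeomorphism $\psi_t$ of $S_t$, which does not change $\tmc_f(S_t)$) shows that $\Phi$ can be replaced by a variation with the same $S_t$ up to second order whose velocity on $S$ is $\varphi\n$. Its acceleration has normal component
\begin{equation*}
\left\langle\Z,\n\right\rangle-2\X^T\varphi+h\left(\X^T,\X^T\right).
\end{equation*}
This is the classical correction appearing in \cite{MR4676392} for area, and it can be checked by comparing the normal speeds. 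Alternatively, one can verify the tangential contributions directly. The $\X^T$-dependent parts in \eqref{derijacdue} and \eqref{dersechinstatement} assemble into $\X^T$-derivatives and divergences plus exactly these terms. This route is messy, so I would prefer the reparametrization. The acceleration enters linearly through $\divv^S\Z$ in \eqref{derjduenormal} and through the last line of \eqref{secvarhnorm}, with exactly the structure of the first variation: the terms $f'(H)\left(\jacobi\langle\Z,\n\rangle+\Z^TH\right)+f(H)\divv^S\Z$ integrate to $\delta\tmc_f(S)[\langle\Z,\n\rangle]$. This produces the first term in \eqref{variazsecondanormalemaarbitrariaperilresto}.

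\textbf{Second variation: the normal case.} For a normal variation, set $\dot H=\jacobi\varphi$ and $\dot J=\varphi H$, and use \eqref{derjduenormal} and \eqref{secvarhnorm} with the $\Z$-terms removed. The term $f''(H)(\jacobi\varphi)^2$ becomes $\varphi\,\jacobi\left(f''(H)\jacobi\varphi\right)$ by \eqref{jacobiselfadjoint}. In $f(H)\ddot J$, integrate $f(H)|\nabla^S\varphi|^2$ by parts to get $-\varphi f(H)\Delta^S\varphi-\varphi f'(H)\langle\nabla^SH,\nabla^S\varphi\rangle$. In $f'(H)\ddot H$, handle $f'(H)\divv^S\!\left(A(\nabla^S\varphi^2)\right)$ by integration by parts, which gives $-2\varphi f''(H)\langle\nabla^SH,A(\nabla^S\varphi)\rangle$. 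The term $-f'(H)H|\nabla^S\varphi|^2$ integrates by parts to $\varphi\left(f'(H)H\Delta^S\varphi+(f''H+f')\langle\nabla^S H,\nabla^S\varphi\rangle\right)$; note that $\Delta^S$ of this type appears with the sign fixed by the statement. The cross term $2f'(H)\varphi H\jacobi\varphi$ contributes $-2f'H\varphi\Delta^S\varphi-2f'H\varphi^2(|h|^2+\ric)$.

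\textbf{Main obstacle.} The main risk is the bookkeeping of these integrations by parts, namely matching the coefficients $-(f'H+f)$ of $\Delta^S\varphi$ and $(f''H-2f')$ of $\langle\nabla^SH,\nabla^S\varphi\rangle$. I would carefully recollect all terms of the form $\varphi\cdot(\text{first-order in }\varphi)$, symmetrizing via $|\nabla\varphi|^2=\frac12\Delta\varphi^2-\varphi\Delta\varphi$. The justification of the tangential reduction is the other delicate step.
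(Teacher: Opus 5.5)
Your proposal is correct and follows essentially the paper's route: the first variation comes from the area formula, with the tangential terms collapsing to $\divv^S\bigl(f(H)\X^T\bigr)$. The second variation comes from reducing to a normal variation and then carrying out the same integrations by parts, which do produce the coefficients $-(f'(H)H+f(H))$ and $f''(H)H-2f'(H)$. The step you leave as a sketch is made explicit in the paper: it takes $\tilde\Phi_t=\Phi_t\circ\Psi_t$, with $\Psi$ the flow of $-\W$ for an extension $\W$ of $\X^T$, which gives $\tilde\X=\X-\W$ and $\tilde\Z=\Z-2\nabla_\W\X+\nabla_\W\W$; the normal component of $\tilde\Z$ on $S$ is exactly $\langle\Z,\n\rangle-2\X^T\varphi+h(\X^T,\X^T)$.
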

\begin{proof}
    First,
    \begin{equation*}
        \begin{split}
              &\frac{d\tmc_f\left(\Phi_t(S)\right)}{dt}\overset{\eqref{areaformulaapplicata}}{=}\int_S\frac{d}{dt}\Big(f\left(H_t\left(\Phi_t(p)\right)\right)\jac\left(\Phi_t|_{S}\right)\left(p\right)\Big)\,dS\\
              &\qquad=\int_S \left(f'\left(H_t\left(\Phi_t(p)\right)\right)\frac{d H_t\left(\Phi_t(p)\right)}{dt}\jac\left(\Phi_t|_{S}\right)\left(p\right)+f\left(H_t\left(\Phi_t(p)\right)\right)\frac{d \jac\left(\Phi_t|_{S}\right)\left(p\right)}{dt}\right)\,dS.\\
        \end{split}
    \end{equation*}
    In particular, by \eqref{derijacuno} and \eqref{derprimhinstatement},
    \begin{equation*}
        \delta\tmc_f(S)[\Phi]=\int_S\Big(f'(H)\left(-\Delta^S\varphi-\varphi\left(|h|^2+\ric(\n,\n)\right)+ \X^T H\right)+f(H)\left(\varphi H+\divv^S\X^T\right)\Big)\,dS.
    \end{equation*}
    Notice that, being $S$ closed,
    \begin{equation}\label{teoremadivergenzainproofvariazioneseconda}
        \int_S\Big( f'(H)\X^T H+f(H)\divv^S\X^T\Big)\,dS=\int_S\divv^S\left(f(H)\X^T\right)\,dS=0. 
    \end{equation}
The divergence theorem and \eqref{teoremadivergenzainproofvariazioneseconda} imply \eqref{primavarfunzgen}. We prove \eqref{variazsecondanormalemaarbitrariaperilresto}.
    We reduce to deal with the case of normal variations. Let $\W\in\Gamma(TM)$ be such that $\W|_S=\X^T$. Denote by $\Psi$ the flow of $-\W$. Since $\W|_S\in\Gamma(TS)$, then $\Psi(t,S)=S$ for any small $t$. Set $\tilde\Phi(t,p)\coloneqq\Phi(t,\Psi(t,p))$. Then $\tilde \Phi$ is a smooth variation, and moreover
    \begin{equation}\label{auxfromnormaltogeneral1}
        \tilde S_t\coloneqq\tilde\Phi(t,S)=\Phi(t,\Psi(t,S))=\Phi(t,S)=S_t
    \end{equation}
    for any $t$ sufficiently small. The area formula and \eqref{auxfromnormaltogeneral1} imply that 
    \begin{equation}\label{auxfromnormaltogeneraltre}
        \delta^2\tmc_f(S)[\Phi]=\delta^2\tmc_f(S)[\tilde \Phi].
    \end{equation} We compute the velocity $\tilde X$ and the acceleration $\tilde Z$ of $\tilde\Phi$. Fix $p\in M$. Fix local normal coordinates $x_1,\ldots,x_{n+1}$ centered at $p$. Set $q(t)=\Phi_t^{-1}(p)$ and $r(t)=\Psi^{-1}_t(q(t)).$ Then, since $\Psi$ is the flow of $-\W$, 
    \begin{equation}\label{auxfromnormaltogeneral2}
        \begin{split}
            \tilde\Y(t,p)&=\left.\frac{\partial}{\partial s}\right|_{s=0}\Phi(t+s,\Psi(t+s,r(t)))\\
            &=\sum_{i=1}^{n+1}\frac{\partial\Phi^i}{\partial t}(t,q(t))\frac{\partial}{\partial x_i}+\sum_{i,j=1}^{n+1}\frac{\partial\Phi^i}{\partial x_j}(t,q(t))\frac{\partial\Psi^j}{\partial t}(t,r(t))\frac{\partial}{\partial x_i}\\
            &=\Y(t,p)-\sum_{i,j=1}^{n+1}\frac{\partial\Phi^i}{\partial x_j}(t,q(t))\W(q(t))\frac{\partial}{\partial x_i}.
        \end{split}
    \end{equation}
    In particular, $\tilde X=\X-\W$, and $\tilde \X|_S=\varphi \n$, whence $\tilde \Phi$ is a normal variation. Moreover, as $   \Phi\left(t,q(t)\right)=p$,
\begin{equation*}
    \begin{split}
        0        
=\frac{\partial\Phi}{\partial t}(0,p)+\sum_{i,j=1}^{n+1}\frac{\partial\Phi_i}{\partial z_j}(0,p)\dot q^j(0)\frac{\partial}{\partial x_i}        =\frac{\partial\Phi}{\partial t}(0,p)+\dot q(0),
    \end{split}
\end{equation*}
whence $\dot q(0)=-\X(p)$. Therefore

    \begin{equation*}
        \begin{split}
            \tilde \X'(p)\overset{\eqref{auxfromnormaltogeneral2}}&{=}\X'(p)-\sum_{i,j,k=1}^{n+1}\left(\delta_{ik}\frac{\partial^2\Phi^i}{\partial t\partial x_j}(0,p)\W^j(p)+\frac{\partial^2\Phi^i}{\partial x_k\partial x_j}(0,p)\dot q^k(0)\W^j(p)+\frac{\partial\Phi^i}{\partial x_j}(0,p)\frac{\partial\W^j}{\partial x_k}(p)\dot q^k(0)\right)\frac{\partial}{\partial x_i}\\
            &=\X'(p)-\sum_{i,j=1}^{n+1}\frac{\partial\X^i}{\partial x_j}(p)\W^j(p)\frac{\partial}{\partial x_i}+\sum_{i,k=1}^{n+1}\frac{\partial\W^i}{\partial x_k}(p)\X^k(p)\frac{\partial}{\partial x_i}\\
            &=\X'(p)-\left(\nabla_\W\X\right)(p)+\left(\nabla_\X\W\right)(p).
        \end{split}
    \end{equation*}
    Finally,
    \begin{equation*}
        \tilde\Z=\tilde\X'+\nabla_{\tilde\X}\tilde\X=\X'-\nabla_\W\X+\nabla_\X\W+\nabla_{\X-\W}\left(\X-\W\right)=\Z-2\nabla_\W\X+\nabla_\W\W,
    \end{equation*}
    so that, recalling that $\W|_S=\X^T$ ,
    \begin{equation}\label{auxfromnormaltogeneral4}
        \left\langle\tilde\Z|_S,\n\right\rangle=\left\langle\Z|_S,\n\right\rangle-2\X^T\varphi+h\left(\X^T,\X^T\right).
    \end{equation}   
    Then, by \eqref{derijacuno}, \eqref{derjduenormal},  \eqref{derprimhinstatement}, \eqref{secvarhnorm} and \eqref{auxfromnormaltogeneraltre},
    \begin{equation*}
        \begin{split}
            \delta^2\tmc_f(S)[\Phi]&=\int_Sf''(H)\left(\Delta^S\varphi+\varphi\left(|h|^2+\ric(\n,\n)\right)\right)^2\,dS\\
            &\quad+\int_Sf'(H)\Big( \divv^S \left( A\left(\nabla^S\varphi^2\right)\right)+2\varphi\divv^S A\left(\nabla^S\varphi\right)-2\varphi\left\langle \nabla^S H,\nabla^S\varphi\right\rangle-\left|\nabla^S\varphi\right|^2H\Big)\,dS\\
            &\quad+\int_Sf'(H)\Big(2\varphi^2\trace\left(h^3\right)-2\varphi^2\left\langle h,\j(\n,\n)\right\rangle-\varphi^2\left(\nabla_\n \ric\right)(\n,\n)\Big)\,dS\\
            &\quad+\int_Sf'(H)\Big(-\Delta^S\left \langle\tilde\Z,\n\right\rangle-\left\langle\tilde\Z,\n\right\rangle\left(|h|^2+\ric(\n,\n)\right)+\tilde\Z^T H\Big)\,dS\\
            &\quad+\int_S 2\varphi Hf'(H)\Big(-\Delta^S\varphi-\varphi\left(|h|^2+\ric(\n,\n)\right)\Big)\\
            &\quad+\int_Sf(H)\Big (\varphi^2H^2-\varphi^2|h|^2-\varphi^2\ric(\n,\n)+\left|\nabla^S\varphi\right|^2+\divv^S\tilde\Z\Big)\,dS.
        \end{split}
    \end{equation*}
    First, combining \eqref{primavarfunzgen} and \eqref{teoremadivergenzainproofvariazioneseconda}, and recalling \eqref{def_jacobi_operator},
        \begin{equation*}
        \begin{split}
            \delta^2&\tmc_f(S)[\Phi]=\underbrace{\int_S\left(f''(H)\jacobi\varphi\right)\jacobi\varphi\,dS}_\mathrm{I}\\
            &\quad+\underbrace{\int_Sf'(H)\Big( \divv^S \left( A\left(\nabla^S\varphi^2\right)\right)-\left|\nabla^S\varphi\right|^2H\Big)\,dS}_\mathrm{II}+\int_Sf'(H)\Big(2\varphi\divv^S A\left(\nabla^S\varphi\right)-2\varphi\left\langle \nabla^S H,\nabla^S\varphi\right\rangle\Big)\,dS\\
            &\quad+\int_Sf'(H)\Big(2\varphi^2\trace\left(h^3\right)-2\varphi^2\left\langle h,\j(\n,\n)\right\rangle-\varphi^2\left(\nabla_\n \ric\right)(\n,\n)\Big)\,dS\\
            &\quad+\int_S 2\varphi Hf'(H)\Big(-\Delta^S\varphi-\varphi\left(|h|^2+\ric(\n,\n)\right)\Big)\\
            &\quad+\underbrace{\int_Sf(H)\left|\nabla^S\varphi\right|^2\,dS}_{\mathrm{III}}+\int_Sf(H)\Big (\varphi^2H^2-\varphi^2|h|^2-\varphi^2\ric(\n,\n)\Big)\,dS+\delta\tmc_f(S)\left[\left\langle\tilde\Z|_S,\n\right\rangle\right].
        \end{split}
    \end{equation*}
    First,
    \begin{equation*}
        \mathrm{I}\overset{\eqref{jacobiselfadjoint}}{=}\int_S\varphi\,\jacobi\left(f''(H)\jacobi\varphi\right)\,dS.
    \end{equation*}
    Moreover,
    \begin{equation*}
 \begin{split}
     \mathrm{II}&=\int_S\Big(-\left\langle\nabla^S f'(H),A\left(\nabla^S\varphi^2\right)\right\rangle-\left\langle f'(H) H\nabla^S\varphi,\nabla^S\varphi\right\rangle\Big)\,dS\\
     &=\int_S\varphi\Big(-2 f''(H)\left\langle\nabla^S H,A\left(\nabla^S\varphi\right)\right\rangle+\divv^S\left(f'(H)H\nabla^S\varphi\right)\Big)\,dS\\
     &=\int_S\varphi\Big(-2 f''(H)\left\langle\nabla^S H,A\left(\nabla^S\varphi\right)\right\rangle+\left(f'(H)+f''(H)H\right)\left\langle \nabla^S H,\nabla^S\varphi\right\rangle+f'(H)H\Delta^S\varphi\Big)\,dS.\\
 \end{split}       
    \end{equation*}
    Finally,
    \begin{equation*}
        \begin{split}
            \mathrm{III}&=-\int_S \varphi\divv^S\left( f(H)\nabla^S\varphi\right)\,dS=-\int_S\varphi \Big(f'(H)\left\langle\nabla^S H,\nabla^S\varphi\right\rangle+f(H)\Delta^S\varphi\Big)\,dS.
        \end{split}
    \end{equation*}
    In particular,
    \begin{equation*}
        \mathrm{II}+\mathrm{III}=\int_S\varphi\Big(-2 f''(H)\left\langle\nabla^S H,A\left(\nabla^S\varphi\right)\right\rangle+f''(H)H\left\langle \nabla^S H,\nabla^S\varphi\right\rangle+\left(f'(H)H-f(H)\right)\Delta^S\varphi\Big)\,dS.
    \end{equation*}
The thesis follows combining the above computations with \eqref{auxfromnormaltogeneral4}.
    \end{proof}
\section{Variation formulas for sub-Riemannian mean curvature functionals}\label{appendicesubriem}
   In this section, we apply the results of \Cref{appendiceriem} to establish variation formulas in the sub-Riemannian Heisenberg group $\hh^n$, for $n\geq 1$. Namely, we approximate it with a sequence of Riemannian manifolds $(\hh^n,\langle\cdot,\cdot\rangle_\eps)_{\eps>0}$, and we exploit \Cref{maintheoremappendixA} to deduce the analogous sub-Riemannian variation formulas. 
   
\subsection{Heisenberg groups}\label{subsec_hg} The Heisenberg group $(\hn,\cdot)$ is $\mathbb R^{2n+1}$ endowed with a group law
that realizes it as stratified Lie group. Its Lie algebra is generated by the (global frame of) left-invariant vector fields 
\begin{equation*}
    Z_i=X_i=\frac{\partial}{\partial x_i}+y_i\frac{\partial}{\partial t},\qquad Z_{n+i}=Y_i=\frac{\partial}{\partial y_i}-x_i\frac{\partial}{\partial t},\qquad Z_{2n+1}=T=\frac{\partial}{\partial t},\qquad i=1,\ldots,n.
\end{equation*}
The only nontrivial commutation relations among $Z_1,\ldots,Z_{2n+1}$ are
\begin{equation*}
    [X_i,Y_i]=-2T,\qquad i=1,\ldots,n.
\end{equation*}
Accordingly, the \emph{horizontal distribution} $\hhh$ generated by $Z_1,\ldots,Z_{2n}$ is \emph{bracket-generating}. A vector field which is tangent to $\hhh$ at every point is called \emph{horizontal}. 
The \emph{complex structure} $J:\Gamma(T\hh^n)\longrightarrow\Gamma(T\hh^n)$ is the unique $C^\infty(\hh^n)$-linear map which satisfies
\begin{equation*}
    J(X_i)=Y_i,\qquad J(Y_i)=-X_i,\qquad J(T)=0,\qquad i=1,\ldots,n.
\end{equation*}
 The triple $(\hh^n,\hhh,J)$ is a prototype of \emph{pseudohermitian manifold}  \cite[Appendix]{MR2165405}. 
The restriction to $\hhh$ of the unique Riemannian metric $\langle\cdot,\cdot\rangle$ making $Z_1,\ldots,Z_{2n+1}$ orthonormal endows $\hh^n$ with the sub-Riemannian structure $\left(\hh^n,\hhh,\langle\cdot,\cdot\rangle|_\hhh\right).$ 
 The \emph{pseudohermitian connection} $\nabla$ \cite{MR4193432} is the unique metric connection for which
\begin{equation}\label{pseudotorsion}
        \nabla_\A\B-\nabla_\B \A-[\A,\B]=2\langle J(\A),\B\rangle T,\qquad \A,\B\in\Gamma(T\hh^n).
\end{equation}
 The Riemannian volume induced by $\langle\cdot,\cdot\rangle$ is the Haar measure of the group, i.e. the standard Lebesgue measure.  Therefore, the Riemannian divergence induced by $\left\langle\cdot,\cdot\right\rangle$ is the Euclidean divergence.
\subsection{Hypersurfaces}\label{subsechypersurf}  Throughout this section,  $S\subseteq\hh^n$ is an embedded, closed, two-sided hypersurface of class $C^2$. A point $p\in S$ is called \emph{characteristic} if $\hhh_p=T_p S$. The set of characteristic points of $S$ is denoted by $S_0$. We will always assume that $S\setminus S_0$ is smooth. At non-characteristic points, the \emph{horizontal tangent space} $\hhh TS$ is the smooth, $(2n-1)$-dimensional distribution defined by 
\begin{equation*}
    \hhh T_p S=\hhh_p\cap T_p S,\qquad p\in S\setminus S_0.
\end{equation*}
Denote by $\n$ the Riemannian unit normal to $S$, and by $\n^\hhh$ its orthogonal projection onto $\hhh$. 
Then, the \emph{horizontal unit normal }  $$\vh=\frac{\n^\hhh}{|\n^\hhh|}$$ 
is well-defined on $S\setminus S_0$, and is the unique, up to sign, horizontal unit vector field orthogonal to $\hhh TS$. Notice that $p$ is a characteristic point if and only if $|\n^\hhh|=0$. Close to every non-characteristic point, it is always possible to extend $\vh$ to a full neighborhood in $\hh^n$ by setting 
   \begin{equation}\label{normcondist}
    \vh=\nabla^\hhh d,
\end{equation}
   where $d$ is the signed \emph{Carnot-Carathéodory distance} from $S$ (cf. \cite{simons,MR4193432}). 
   Henceforth, $\vh$ is always extended as in \eqref{normcondist}.
   The \emph{fundamental function} $\alpha$ is defined on $S\setminus S_0$ as the unique smooth function such that 
   \begin{equation*}\label{firstdefinitionofthevecotrs}
       \s\coloneqq T-\alpha\vh\in\Gamma(TS).
   \end{equation*}
   It is known (cf. \cite{simons,MR4923606}) that $\alpha=Td$, and moreover
  \begin{equation}\label{propvh5}
    \nabla_{\vh}\vh=-2\alpha J(\vh).
\end{equation}
   Denote by $\hhh' TS$ the distribution defined by
\begin{equation*}
    \hhh'T_p S=\hhh T_pS\cap J\left(\hhh T_p S\right),\qquad p\in S\setminus S_0.
\end{equation*}
Then, $\hhh'TS$ is a $(2n-2)$-dimensional sub-bundle of $\hhh TS$, and the latter can be orthogonally decomposed as $
    \hhh TS=\hhh' TS\oplus\spann J(\vh). $
Notice that, in the first Heisenberg group, $\hhh' TS=\{0\}$. 
It is easy to check that 
\begin{equation*}
    \n=\frac{1}{\sqrt{1+\alpha^2}}\vh+\frac{\alpha}{\sqrt{1+\alpha^2}}T,\qquad \alpha=\frac{\left\langle \n,T\right\rangle}{|\n^\hhh|}.
\end{equation*}
The \emph{horizontal second fundamental form} $h^\hhh$ and the \emph{symmetric horizontal second fundamental form} (cf. \cite{simons,MR4193432}) $\tilde h^\hhh$ are defined on $S\setminus S_0$ respectively by
\begin{equation*}
    h^\hhh(\A,\B)\coloneqq\left\langle A^\hhh(\A),\B\right\rangle,\qquad \tilde h^\hhh(\A,\B)\coloneqq\frac{h^\hhh(\A,\B)+h^\hhh(\B,\A)}{2},\qquad\A,\B\in\Gamma(\hhh TS),
\end{equation*}
 where $ A^\hhh(\A)\coloneqq \nabla_\A\vh$ is the \emph{horizontal shape operator}. The forms $h^\hhh$ and $\tilde h^\hhh$ are related (cf. \cite{simons}) by
\begin{equation}\label{rapptrahetildacca2026}
    \tilde h^\hhh(\A,\B)=h^\hhh(\A,\B)+\alpha\left\langle J(\A),\B\right\rangle,\qquad\A,\B\in\Gamma(\hhh TS).
\end{equation}
The \emph{horizontal mean curvature} is then defined on $S\setminus S_0$ by
\begin{equation*}
    H^\hhh=\trace h^\hhh=\trace \tilde h^\hhh.
\end{equation*}
Finally, the relevant sub-Riemannian surface measure $\sigma^\hhh$ is defined (cf. \cite{MR2354992, MR2262196}) by
\begin{equation}\label{areaelementorizz}
    \sigma^\hhh\coloneqq \frac{1}{\sqrt{1+\alpha^2}}\sigma,
\end{equation}
where $\sigma$ is the Riemannian surface measure induced by $\langle\cdot,\cdot\rangle$. Since $S$ is of class $C^2$, then $\sigma^\hhh(S_0)=\sigma(S_0)=0$ \cite{MR2021034}: accordingly, $S_0$ will be omitted in the forthcoming integrals.
The \emph{tangent pseudohermitian connection} $\nabla^S$ is the affine connection defined on $S$ by
\begin{equation*}
    \nabla^S _\A \B=\nabla_\A\B-\langle\nabla_\A\B,\vh\rangle\vh,\qquad \A,\B\in\Gamma(\hhh TS).
\end{equation*}
Let $\E_1,\ldots,\E_{2n-1}$ be any local orthonormal frame of $\hhh TS$. If $\varphi\in C^\infty(S\setminus S_0)$ and $\A\in\Gamma(\hhh TS)$ is supported in $S\setminus S_0$, the \emph{horizontal tangential gradient} of $\varphi$ and the  \emph{horizontal tangential divergence} of $\A$ are defined by 
\begin{equation*}
    \nabla ^{\hhh,S}\varphi=\sum_{i=1}^{2n-1}\left(\E_i\varphi\right)\E_i,\qquad \divv^{\hhh,S}\A\coloneqq\sum_{i=1}^{2n-1}\left\langle \nabla^S_{\E_i}\A,\E_i\right\rangle.
\end{equation*}
The \emph{horizontal tangential Laplacian} and the \emph{modified horizontal tangential Laplacian} are then defined by
\begin{equation*}
    \Delta^{\hhh,S} \varphi=\divv^{\hhh,S}\nabla^{\hhh,S}\varphi,\qquad \hat\Delta^{\hhh,S}\varphi\coloneqq \Delta^{\hhh,S}\varphi +2\alpha J(\vh)\varphi.
\end{equation*}
   Finally, we denote by $\jacobi^\hhh$ the \emph{horizontal Jacobi operator}
    \begin{equation*}\label{horizontaljacopdefmain}
    \jacobi^\hhh \varphi=-\hat\Delta^{\hhh,S}\varphi-\varphi\left(|\tilde h^\hhh|^2+4J(\vh)\alpha+(2n+2)\alpha^2\right),\qquad\varphi\in C^\infty(S\setminus S_0).
    \end{equation*}
    Unlike $\Delta^{\hhh,S}$  (cf. \cite{MR2354992}), both $\hat\Delta^{\hhh,S}$ and $\jacobi^\hhh$ are self-adjoint on $C^\infty_c(S\setminus S_0)$ (cf. \Cref{proposizioneibpsubriemsurf}).   
   \subsection{Riemannian approximation}
   For any $\eps>0$, denote by $\left\langle\cdot,\cdot\right\rangle_\eps$ the unique Riemannian metric on $\hh^n$ such that $(Z_1^\eps,\ldots,Z_{2n+1}^\eps)=(X_1,\ldots,X_n,Y_1,\ldots,Y_n,\eps T)$ is a global orthonormal frame. If $\A\in\Gamma(T\hh^n)$, denote by $A^{1,\eps},\ldots,A^{2n,\eps},A^{2n+1,\eps}$ its components with respect to the above frame. Clearly $A^j\coloneqq A^{j,\eps}$ is independent of $\eps$ for $j=1,\ldots,2n$. We may write $A^{2n+1}\coloneqq A^{2n+1,\eps}$ when there is no ambiguity.
   Since $\left\langle\A,\B\right\rangle_\eps$ does not depend on $\eps$ when $\A,\B\in\Gamma(\hhh)$, we set $\left\langle\A,\B\right\rangle\coloneqq \left\langle\A,\B\right\rangle_\eps.$    Denote by $\nabla^\eps$ and $\ \R^\eps $ the induced Levi-Civita connection and curvature tensor respectively. We recall the following relations (cf. \cite{MR5029815}):
   \begin{alignat}{2}
&\notag \nabla_{X_i}^\eps X_j=0, \qquad \qquad  \ \, \nabla^\eps_{Y_i}Y_j=0, \qquad \ \,\nabla^\eps_{T}T=0, \\
\label{levi_civita}&\nabla^\eps_{X_i}Y_j=-\delta_{i,j}T, \qquad \, \nabla^\eps_{X_i}\eps T=\frac{Y_i}{\eps}, \quad \ \ \nabla^\eps_{Y_i}\eps T=-\frac{X_i}{\eps}, \\
&\notag \nabla^\eps_{Y_i}X_j=\delta_{i,j}T, \qquad \ \ \,\,\nabla^\eps_{\eps T}X_i=\frac{Y_i}{\eps}, \  \ \quad \nabla^\eps_{\eps T}Y_i=-\frac{X_i}{\eps}.
\end{alignat}
The Levi-Civita connection $\nabla^\eps$ relates to the pseudohermitian connection $\nabla$ as follows.
\begin{lemma}
    Let $\A,\B\in\Gamma(T\hh^n)$. Then
    \begin{equation}\label{levicivitavspseudohermitian}
        \nabla^\eps_\A\B=\nabla_\A\B+\frac{\langle\A,\eps T\rangle_\eps}{\eps}J(\B)+\frac{\langle\B,\eps T\rangle_\eps}{\eps}J(\A)+\langle \A,J(\B)\rangle T.
    \end{equation}
    In particular,
    \begin{equation}\label{connessioneerotazione}
    \nabla^\eps_\A \eps T=\frac{1}{\eps}J(\A).
\end{equation}
Moreover, if $\A,\B,\C\in\Gamma(\hhh)$, then
\begin{equation}\label{relazionetraleconnessioni}
    \left\langle\nabla^\eps_\A\B,\C\right\rangle_\eps=\left\langle\nabla_\A\B,\C\right\rangle.
\end{equation}
\end{lemma}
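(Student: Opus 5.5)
The plan is to show that both sides of \eqref{levicivitavspseudohermitian} are connections in the same sense, so that it is enough to compare them on the global frame $Z_1,\ldots,Z_{2n},\eps T$.

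Write $D_\A\B$ for the right-hand side of \eqref{levicivitavspseudohermitian}. The three correction terms
\[
\frac{\langle\A,\eps T\rangle_\eps}{\eps}J(\B),\qquad \frac{\langle\B,\eps T\rangle_\eps}{\eps}J(\A),\qquad \langle \A,J(\B)\rangle T
\]
are $C^\infty(\hh^n)$-bilinear in $(\A,\B)$. Hence $D$ is $C^\infty$-linear in $\A$ and satisfies the same Leibniz rule in $\B$ as $\nabla$. Since $\nabla^\eps$ has the same two properties, it suffices to check $D_{Z_i^\eps}Z_j^\eps=\nabla^\eps_{Z_i^\eps}Z_j^\eps$ for all $i,j$, with the right-hand sides given by the table \eqref{levi_civita}.

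First I identify $\nabla$ on the frame. I claim that the connection for which every left-invariant field $Z_j$ is parallel is the pseudohermitian connection. It is metric, because the frame is orthonormal and parallel. Its torsion is $\nabla_\A\B-\nabla_\B\A-[\A,\B]=-[\A,\B]$ on frame fields, and this vanishes except for $-[X_i,Y_i]=2T=2\langle J(X_i),Y_i\rangle T$. By tensoriality this agrees with \eqref{pseudotorsion}, so uniqueness gives the claim. Consequently $D_{Z_i^\eps}Z_j^\eps$ reduces to the correction terms alone.

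Next I check the frame cases one at a time, using that $J$ kills $T$ and maps $\hhh$ to itself.
\begin{itemize}
\item Two horizontal fields: only $\langle\A,J(\B)\rangle T$ survives. For example $D_{X_i}Y_j=\langle X_i,-X_j\rangle T=-\delta_{ij}T$ and $D_{Y_i}X_j=\langle Y_i,Y_j\rangle T=\delta_{ij}T$, while $D_{X_i}X_j=D_{Y_i}Y_j=0$.
\item Horizontal and vertical: $D_{X_i}\eps T=\frac{1}{\eps}J(X_i)=Y_i/\eps$, $D_{\eps T}X_i=Y_i/\eps$, $D_{\eps T}Y_i=-X_i/\eps$, and $D_{Y_i}\eps T=-X_i/\eps$.
\item Two vertical fields: $D_{\eps T}\eps T=0$, since $J(T)=0$.
\end{itemize}
All of these match \eqref{levi_civita}, which proves \eqref{levicivitavspseudohermitian}.

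For \eqref{connessioneerotazione}, take $\B=\eps T$ in \eqref{levicivitavspseudohermitian}. Then $\nabla T=0$, $J(\eps T)=0$ and $\langle\A,J(\eps T)\rangle=0$, while $\langle\eps T,\eps T\rangle_\eps=1$, leaving $\frac1\eps J(\A)$.

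For \eqref{relazionetraleconnessioni}, take $\A,\B,\C$ horizontal. The first two correction terms vanish because $\A$ and $\B$ have no $\eps T$-component. The last correction term is a multiple of $T$, hence $\langle\cdot,\cdot\rangle_\eps$-orthogonal to $\C$. It remains to note that $\nabla_\A\B$ is horizontal when $\B$ is, because the frame $X_i,Y_i$ is $\nabla$-parallel. Therefore $\langle\nabla_\A\B,\C\rangle_\eps=\langle\nabla_\A\B,\C\rangle$, since $\langle\cdot,\cdot\rangle_\eps$ restricted to $\hhh$ does not depend on $\eps$.

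The only step needing care is identifying $\nabla$ as the connection with parallel left-invariant frame. After that, everything is a finite case check against \eqref{levi_civita}, where the main risk is sign conventions for $J$ and the bracket $[X_i,Y_i]=-2T$.
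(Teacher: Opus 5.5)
Your proof is correct and follows essentially the paper's route. The paper expands $\nabla^\eps_\A\B$ in the frame $Z_1^\eps,\ldots,Z_{2n+1}^\eps$, identifies the part with derivatives of the coefficients as $\nabla_\A\B$ (tacitly using that the left-invariant frame is $\nabla$-parallel, which you justify through the torsion check and uniqueness), and evaluates the remaining frame terms with \eqref{levi_civita}; this is your frame comparison in different packaging, and you additionally write out the consequences \eqref{connessioneerotazione} and \eqref{relazionetraleconnessioni}, which the paper leaves implicit.
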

\begin{proof}
    By a direct computation,
    \begin{equation*}
        \begin{split}
            \nabla^\eps_\A\B&=\sum_{j,k=1}^{2n+1}A^{j,\eps}Z_j^\eps B^{k,\eps}Z_k^\eps+\sum_{j,k=1}^{2n+1}A^{j,\eps}B^{k,\eps}\nabla^\eps_{Z_j^\eps}Z_k^\eps\\
            \overset{\eqref{levi_civita}}&{=}\nabla_\A\B+\sum_{j=1}^nA^jB^{n+j}\nabla^\eps_{X_j}Y_j+\langle \B,\eps T\rangle_\eps\sum_{j=1}^nA^j\nabla^\eps_{X_j}\eps T+\sum_{j=1}^nA^{n+j}B^{j}\nabla^\eps_{Y_j}X_j\\
            &\quad+\langle \B,\eps T\rangle_\eps\sum_{j=1}^nA^{n+j}\nabla^\eps_{Y_j}\eps T+\langle \A,\eps T\rangle_\eps\sum_{j=1}^nB^{j}\nabla^\eps_{\eps T}X_j+\langle \A,\eps T\rangle_\eps\sum_{j=1}^nB^{n+j}\nabla^\eps_{\eps T}Y_j\\
            \overset{\eqref{levi_civita}}&{=}\nabla_\A\B+\frac{\langle\A,\eps T\rangle_\eps}{\eps}J(\B)+\frac{\langle\B,\eps T\rangle_\eps}{\eps}J(\A)+\langle \A,J(\B)\rangle T.
        \end{split}
    \end{equation*}
\end{proof}
\subsection{Ambient curvatures} First, we compute the ambient curvatures of $(\hh^n,\langle\cdot,\cdot \rangle_\eps)$, starting from $\R^\eps$.
\begin{proposition}\label{riemepsexplicitprop}
    Let $\eps>0$. Let $\A,\B,\C\in\Gamma(T\hh^n)$. Then
    \begin{equation*}
        \begin{split}
             \R^\eps (\A,\B)\C&=\frac{2}{\eps^2}\left\langle J(\A),\B\right\rangle _\eps J(\C)+\frac{1}{\eps^2}\left\langle J(\A),\C\right\rangle_\eps J(\B)-\frac{1}{\eps^2}\left\langle J(\B),\C\right\rangle _\eps J(\A)\\
            &\quad+\frac{A^{2n+1,\eps}}{\eps^2}\langle\B,\C\rangle _\eps\eps T-\frac{B^{2n+1,\eps}}{\eps^2}\langle\A,\C\rangle_\eps \eps T+\frac{B^{2n+1,\eps}C^{2n+1,\eps}}{\eps^2}\A-\frac{A^{2n+1,\eps}C^{2n+1,\eps}}{\eps^2}\B.
        \end{split}
    \end{equation*}
    In particular, if $\D\in\Gamma(T\hh^n)$,
    \begin{equation}\label{riemannepsilonquattrozero}
        \begin{split}
            & \R^\eps (\A,\B,\C,\D)=\frac{2}{\eps^2}\left\langle J(\A),\B\right\rangle _\eps \left\langle J(\C),\D\right\rangle_\eps+\frac{1}{\eps^2}\left\langle J(\A),\C\right\rangle_\eps \left\langle J(\B),\D\right\rangle_\eps-\frac{1}{\eps^2}\left\langle J(\B),\C\right\rangle _\eps \left\langle J(\A),\D\right\rangle_\eps\\
            &\quad+\frac{A^{2n+1,\eps}D^{2n+1,\eps}}{\eps^2}\langle\B,\C\rangle _\eps-\frac{B^{2n+1,\eps}D^{2n+1,\eps}}{\eps^2}\langle\A,\C\rangle_\eps +\frac{B^{2n+1,\eps}C^{2n+1,\eps}}{\eps^2}\left\langle \A,\D\right\rangle_\eps-\frac{A^{2n+1,\eps}C^{2n+1,\eps}}{\eps^2}\langle\B,\D\rangle_\eps.
        \end{split}
    \end{equation}
\end{proposition}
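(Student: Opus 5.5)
Both sides of the claimed identity are $C^\infty(\hh^n)$-multilinear in $\A,\B,\C$: the left because $\R^\eps$ is a tensor, the right by inspection. So I will verify it on the orthonormal frame $Z^\eps_1,\ldots,Z^\eps_{2n+1}=X_1,\ldots,X_n,Y_1,\ldots,Y_n,\eps T$. The $(4,0)$ formula \eqref{riemannepsilonquattrozero} then follows by pairing with $\D$ via $\langle\cdot,\cdot\rangle_\eps$. For this I use $\langle \eps T,\D\rangle_\eps=D^{2n+1,\eps}$ and the fact that $\langle J(\C),\D\rangle_\eps$ only involves horizontal components.

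Rather than expanding in Christoffel symbols, I will use \eqref{levicivitavspseudohermitian} and \eqref{connessioneerotazione}. In frame form, \eqref{levi_civita} gives every $\nabla^\eps_{Z^\eps_j}Z^\eps_k$ explicitly. Each is a constant-coefficient combination of frame fields: $0$, $\pm T=\pm\frac1\eps(\eps T)$, or $\pm\frac1\eps J(Z_j^\eps)$. The frame brackets are $[X_i,Y_i]=-2T$, with all others zero. For frame fields, $\R^\eps(Z_a,Z_b)Z_c=\nabla^\eps_{Z_a}\nabla^\eps_{Z_b}Z_c-\nabla^\eps_{Z_b}\nabla^\eps_{Z_a}Z_c-\nabla^\eps_{[Z_a,Z_b]}Z_c$ is therefore a finite algebraic computation with no derivatives of coefficients. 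It is convenient to record the compact rules $\nabla^\eps_{\A}\eps T=\frac1\eps J(\A)$, $\nabla^\eps_{\eps T}\A=\frac1\eps J(\A)$ for horizontal left-invariant $\A$, and $\nabla^\eps_{\A}\B=\langle\A,J(\B)\rangle T$ for horizontal left-invariant $\A,\B$. The last one is read off from \eqref{levi_civita}: $\nabla_{X_i}Y_i=-T$ and $\langle X_i,J(Y_i)\rangle=-1$.

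I split into cases by how many of $\A,\B,\C$ equal $\eps T$.

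Case (i): all three horizontal. Then
\[
\nabla^\eps_\A\nabla^\eps_\B\C=\langle\B,J\C\rangle\nabla^\eps_\A T=\frac{1}{\eps^2}\langle\B,J\C\rangle J(\A).
\]
Antisymmetrizing and adding $-\nabla^\eps_{[\A,\B]}\C=2\langle J\A,\B\rangle\nabla^\eps_T\C=\frac{2}{\eps^2}\langle J\A,\B\rangle J(\C)$ should yield the three $J$-terms, using $\langle\B,J\C\rangle=-\langle J\B,\C\rangle$.

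Case (ii): exactly one of the three is $\eps T$. Sub-case $\C=\eps T$ gives $\frac1{\eps^2}(J^2\B\cdot\ldots)$ terms; combining $J^2=-\mathrm{id}$ on $\hhh$ with the vertical output $\langle\A,J(J\B)\rangle T$ should produce $-\frac{1}{\eps^2}\langle\A,\B\rangle\eps T$-type terms. These must be matched against the $A^{2n+1}$ and $B^{2n+1}$ terms, and here the $\frac{2}{\eps^2}\langle J\A,\B\rangle J(\eps T)=0$ contributions vanish since $J(T)=0$. Sub-cases $\A=\eps T$ or $\B=\eps T$ are handled similarly, and give the terms $\frac{B^{2n+1}C^{2n+1}}{\eps^2}\A$ and $-\frac{A^{2n+1}C^{2n+1}}{\eps^2}\B$ only when $\C=\eps T$ as well.

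Case (iii): two or more of the three equal $\eps T$. These either vanish by antisymmetry ($\A=\B=\eps T$) or reduce to $\R^\eps(\A,\eps T)\eps T$. For the latter I expect $\frac{1}{\eps^2}\A$, which I check against the formula: the right-hand side gives $\frac{B^{2n+1}C^{2n+1}}{\eps^2}\A=\frac1{\eps^2}\A$. This is consistent with the known sectional curvature $1/\eps^2$ of vertical planes.

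The main obstacle is purely bookkeeping: sign conventions (note that $\R(\A,\B)\C$ here has $\nabla_\A\nabla_\B-\nabla_\B\nabla_\A-\nabla_{[\A,\B]}$, so sectional curvature is $\R(\A,\B,\B,\A)$), and the factor $2$ coming from $[X_i,Y_i]=-2T$. I would cross-check the final case (i) formula on $\R^\eps(X_1,Y_1)Y_1$, which should equal $\frac{3}{\eps^2}X_1$ up to the paper's sign convention. This recovers the standard $-3/\eps^2$ horizontal sectional curvature of the Heisenberg group with respect to $\R(\A,\B,\B,\A)$.
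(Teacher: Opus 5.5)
Your strategy is the paper's. Both arguments use tensoriality to reduce to evaluating $\R^\eps$ on the frame $X_1,\ldots,X_n,Y_1,\ldots,Y_n,\eps T$ via \eqref{levi_civita}. The paper tabulates all $\R^\eps(Z_i,Z_j)Z_k$ and then resums the multilinear expansion into the groups $\mathrm{I},\ldots,\mathrm{V}$. You instead check the closed form directly on frame triples, using $\nabla^\eps_\A\eps T=\frac1\eps J(\A)$, $\nabla^\eps_{\eps T}\A=\frac1\eps J(\A)$ and $\nabla^\eps_\A\B=\langle\A,J(\B)\rangle T$. This is a slightly more economical packaging of the same computation, and your case (i) is correct as written. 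Two slips need correcting, one of which affects case (ii) as you describe it.

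First, your bookkeeping in case (ii) is misattributed. Take $\C=\eps T$ and $\A,\B$ horizontal. The term $\nabla^\eps_\A\nabla^\eps_\B\eps T=-\frac{1}{\eps^2}\langle\A,\B\rangle\eps T$ is symmetric in $\A,\B$, so it cancels on antisymmetrization. Also $-\nabla^\eps_{[\A,\B]}\eps T=2\langle J(\A),\B\rangle\nabla^\eps_T\eps T=0$. Hence $\R^\eps(\A,\B)\eps T=0$. This agrees with the right-hand side, where every term vanishes (in particular $A^{2n+1,\eps}=B^{2n+1,\eps}=0$). So in this sub-case there is nothing to match against the $A^{2n+1,\eps}$ and $B^{2n+1,\eps}$ terms. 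Those terms come instead from $\B=\eps T$ with $\A,\C$ horizontal, using $[\A,\eps T]=0$:
\begin{equation*}
\R^\eps(\A,\eps T)\C=\nabla^\eps_\A\Big(\frac1\eps J(\C)\Big)-\nabla^\eps_{\eps T}\big(\langle\A,J(\C)\rangle T\big)=-\frac{1}{\eps^2}\langle\A,\C\rangle\eps T.
\end{equation*}
This is exactly $-\frac{B^{2n+1,\eps}}{\eps^2}\langle\A,\C\rangle_\eps\eps T$. The case $\A=\eps T$ gives the $A^{2n+1,\eps}$ term by antisymmetry. Your case (iii) value $\R^\eps(\A,\eps T)\eps T=-\nabla^\eps_{\eps T}\big(\frac1\eps J(\A)\big)=\frac{1}{\eps^2}\A$ is right.

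Second, in your sanity check the paper's convention gives $\R^\eps(X_1,Y_1)Y_1=-\frac{3}{\eps^2}X_1$. This is what your case (i) formula yields, and it matches the paper's $\R^\eps(X_i,Y_j)Y_k=-\frac{1}{\eps^2}(\delta_{ik}X_j+2\delta_{ij}X_k)$. It gives $\R^\eps(X_1,Y_1,Y_1,X_1)=-3/\eps^2$, as your last sentence says. The value $+\frac{3}{\eps^2}X_1$ you wrote would contradict that, and once the convention is fixed there is no remaining sign ambiguity.
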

\begin{proof}
Fix $i,j,k=1,\ldots,n$. Then, by \eqref{levi_civita},
\begin{align*}
     \R^\eps (X_i,X_j)X_k&=0,\\
     \R^\eps (X_i,X_j)Y_k&=\nabla^{\eps}_{X_i}\nabla^\eps_{X_j}Y_k-\nabla^{\eps}_{X_j}\nabla^\eps_{X_i}Y_k-\nabla^\eps_{[X_i,X_j]}Y_k=-\frac{\delta_{jk}}{\eps}\nabla^{\eps}_{X_i}\eps T+\frac{\delta_{ik}}{\eps}\nabla^{\eps}_{X_j}\eps T=\frac{1}{\eps^2}\left(\delta_{ik}Y_j-\delta_{jk}Y_i\right),\\
     \R^\eps (X_i,X_j)\eps T&=\nabla^{\eps}_{X_i}\nabla^\eps_{X_j}\eps T-\nabla^{\eps}_{X_j}\nabla^\eps_{X_i}\eps T-\nabla^\eps_{[X_i,X_j]}\eps T=\frac{1}{\eps}\nabla^{\eps}_{X_i}Y_j-\frac{1}{\eps}\nabla^{\eps}_{X_j}Y_i=0.
\end{align*}
Moreover, 
\begin{align*}
     \R^\eps (X_i,Y_j)X_k&=\nabla^{\eps}_{X_i}\nabla^\eps_{Y_j}X_k-\nabla^{\eps}_{Y_j}\nabla^\eps_{X_i}X_k-\nabla^\eps_{[X_i,Y_j]}X_k=\frac{\delta_{jk}}{\eps}\nabla^{\eps}_{X_i}\eps T+\frac{2\delta_{ij}}{\eps}\nabla^\eps_{\eps T}X_k=\frac{1}{\eps^2}\left(\delta_{jk}Y_i+2\delta_{ij}Y_k\right),\\
     \R^\eps (X_i,Y_j)Y_k&=\nabla^{\eps}_{X_i}\nabla^\eps_{Y_j}Y_k-\nabla^{\eps}_{Y_j}\nabla^\eps_{X_i}Y_k-\nabla^\eps_{[X_i,Y_j]}Y_k=\frac{\delta_{ik}}{\eps}\nabla^\eps_{Y_j}\eps T+\frac{2\delta_{ij}}{\eps}\nabla^\eps_{\eps T}Y_k=-\frac{1}{\eps^2}\left(\delta_{ik}X_j+2\delta_{ij}X_k\right),\\
     \R^\eps (X_i,Y_j)\eps T&=\nabla^{\eps}_{X_i}\nabla^\eps_{Y_j}\eps T-\nabla^{\eps}_{Y_j}\nabla^\eps_{X_i}\eps T-\nabla^\eps_{[X_i,Y_j]}\eps T=0.
\end{align*}
In addition,
\begin{align*}
     \R^\eps (X_i,\eps T)X_k&=\nabla^{\eps}_{X_i}\nabla^\eps_{\eps T}X_k-\nabla^{\eps}_{\eps T}\nabla^\eps_{X_i}X_k-\nabla^\eps_{[X_i,\eps T]}X_k=\frac{1}{\eps}\nabla^{\eps}_{X_i}Y_k=-\frac{\delta_{ik}}{\eps^2}\eps T,\\
      \R^\eps (X_i,\eps T)Y_k&=\nabla^{\eps}_{X_i}\nabla^\eps_{\eps T}Y_k-\nabla^{\eps}_{\eps T}\nabla^\eps_{X_i}Y_k-\nabla^\eps_{[X_i,\eps T]}Y_k=0,\\
       \R^\eps (X_i,\eps T)\eps T&=\nabla^{\eps}_{X_i}\nabla^\eps_{\eps T}\eps T-\nabla^{\eps}_{\eps T}\nabla^\eps_{X_i}\eps T-\nabla^\eps_{[X_i,\eps T]}\eps T=-\frac{1}{\eps}\nabla^{\eps}_{\eps T}Y_i=\frac{1}{\eps^2}X_i.
\end{align*}
Furthermore,
\begin{align*}
     \R^\eps (Y_i,Y_j)X_k&=\nabla^{\eps}_{Y_i}\nabla^\eps_{Y_j}X_k-\nabla^{\eps}_{Y_j}\nabla^\eps_{Y_i}X_k-\nabla^\eps_{[Y_i,Y_j]}X_k=\frac{\delta_{jk}}{\eps}\nabla^{\eps}_{Y_i}\eps T-\frac{\delta_{ik}}{\eps}\nabla^{\eps}_{Y_j}\eps T=\frac{1}{\eps^2}\left(\delta_{ik}X_j-\delta_{jk}X_i\right),\\
     \R^\eps (Y_i,Y_j)Y_k&=0,\\
     \R^\eps (Y_i,Y_j)\eps T&=\nabla^{\eps}_{Y_i}\nabla^\eps_{Y_j}\eps T-\nabla^{\eps}_{Y_j}\nabla^\eps_{Y_i}\eps T-\nabla^\eps_{[Y_i,Y_j]}\eps T=-\frac{1}{\eps}\nabla^{\eps}_{Y_i}X_j+\frac{1}{\eps}\nabla^{\eps}_{Y_j}X_i=0.
\end{align*}
Finally,
\begin{align*}
      \R^\eps (Y_i,\eps T)X_k&=\nabla^{\eps}_{Y_i}\nabla^\eps_{\eps T}X_k-\nabla^{\eps}_{\eps T}\nabla^\eps_{Y_i}X_k-\nabla^\eps_{[Y_i,\eps T]}X_k=0,\\
        \R^\eps (Y_i,\eps T)Y_k&=\nabla^{\eps}_{Y_i}\nabla^\eps_{\eps T}Y_k-\nabla^{\eps}_{\eps T}\nabla^\eps_{Y_i}Y_k-\nabla^\eps_{[Y_i,\eps T]}Y_k=-\frac{1}{\eps}\nabla^{\eps}_{Y_i}X_k=-\frac{\delta_{ik}}{\eps^2}\eps T,\\
         \R^\eps (Y_i,\eps T)\eps T&=\nabla^{\eps}_{Y_i}\nabla^\eps_{\eps T}\eps T-\nabla^{\eps}_{\eps T}\nabla^\eps_{Y_i}\eps T-\nabla^\eps_{[Y_i,\eps T]}\eps T=\frac{1}{\eps}\nabla^\eps_{\eps T}X_i=\frac{1}{\eps^2}Y_i.
\end{align*}
    Therefore 
    \begin{equation*}
        \begin{split}
            & \R^\eps (\A,\B)\C=\sum_{i,j,k=1}^{2n+1}A^iB^jC^k \R^\eps (Z_i,Z_j)Z_k\\
            &=\sum_{i=1}^n\sum_{j,k=1}^{2n+1}A^iB^jC^k \R^\eps (X_i,Z_j)Z_k+\sum_{i=1}^n\sum_{j,k=1}^{2n+1}A^{n+i}B^jC^k \R^\eps (Y_i,Z_j)Z_k+A^{2n+1}\sum_{j,k=1}^{2n+1}B^jC^k \R^\eps (\eps T,Z_j)Z_k\\
            &=\sum_{i,j=1}^n\sum_{k=1}^{2n+1}A^iB^jC^k \R^\eps (X_i,X_j)Z_k+\sum_{i,j=1}^n\sum_{k=1}^{2n+1}A^iB^{n+j}C^k \R^\eps (X_i,Y_j)Z_k+B^{2n+1}\sum_{i=1}^n\sum_{k=1}^{2n+1}A^iC^k \R^\eps (X_i,\eps T)Z_k\\
            &\quad+\sum_{i,j=1}^n\sum_{k=1}^{2n+1}A^{n+i}B^jC^k \R^\eps (Y_i,X_j)Z_k+\sum_{i,j=1}^n\sum_{k=1}^{2n+1}A^{n+i}B^{n+j}C^k \R^\eps (Y_i,Y_j)Z_k\\
            &\quad+B^{2n+1}\sum_{i=1}^n\sum_{k=1}^{2n+1}A^{n+i}C^k \R^\eps (Y_i,\eps T)Z_k+A^{2n+1}\sum_{j=1}^n\sum_{k=1}^{2n+1}B^jC^k \R^\eps (\eps T,X_j)Z_k\\
            &\quad +A^{2n+1}\sum_{j=1}^n\sum_{k=1}^{2n+1}B^{n+j}C^k \R^\eps (\eps T,Y_j)Z_k.           
        \end{split}
    \end{equation*}
    By the symmetry of $\ \R^\eps ,$ 
    \begin{equation*}
        \begin{split}
             \R^\eps (\A,\B)\C&=\underbrace{\sum_{i,j=1}^n\sum_{k=1}^{2n+1}A^iB^jC^k \R^\eps (X_i,X_j)Z_k}_\mathrm{I}+\underbrace{\sum_{i,j=1}^n\sum_{k=1}^{2n+1}\left(A^iB^{n+j}-A^{n+j}B^i\right)C^k \R^\eps (X_i,Y_j)Z_k}_\mathrm{II}\\
            &\quad+\underbrace{\sum_{i=1}^n\sum_{k=1}^{2n+1}\left(A^iB^{2n+1}-A^{2n+1}B^i\right)C^k \R^\eps (X_i,\eps T)Z_k}_\mathrm{III}+\underbrace{\sum_{i,j=1}^n\sum_{k=1}^{2n+1}A^{n+i}B^{n+j}C^k \R^\eps (Y_i,Y_j)Z_k}_\mathrm{IV}\\
            &\quad+\underbrace{\sum_{i=1}^n\sum_{k=1}^{2n+1}\left(A^{n+i}B^{2n+1}-A^{2n+1}B^{n+i}\right)C^k \R^\eps (Y_i,\eps T)Z_k}_\mathrm{V}.
        \end{split}
    \end{equation*}
    First,
    \begin{equation*}
        \begin{split}
            \mathrm{I}=\frac{1}{\eps^2}\sum_{i,j=1}^nA^iB^jC^{n+i}Y_j-\frac{1}{\eps^2}\sum_{i,j=1}^nA^iB^jC^{n+j}Y_i=\frac{1}{\eps^2}\sum_{i,j=1}^nA^jB^iC^{n+j}Y_i-\frac{1}{\eps^2}\sum_{i,j=1}^nA^iB^jC^{n+j}Y_i.
        \end{split}
    \end{equation*}
    Moreover,
    \begin{equation*}
        \begin{split}
            \mathrm{II}&=\sum_{i,j,k=1}^n\left(A^iB^{n+j}-A^{n+j}B^i\right)C^k \R^\eps (X_i,Y_j)X_k+\sum_{i,j,k=1}^n\left(A^iB^{n+j}-A^{n+j}B^i\right)C^{n+k} \R^\eps (X_i,Y_j)Y_k\\
            &=\frac{1}{\eps^2}\sum_{i,j=1}^n\left(A^iB^{n+j}-A^{n+j}B^i\right)C^jY_i+\frac{2}{\eps^2}\sum_{i,k=1}^n\left(A^iB^{n+i}-A^{n+i}B^i\right)C^kY_k\\
            &\quad-\frac{1}{\eps^2}\sum_{i,j=1}^n\left(A^iB^{n+j}-A^{n+j}B^i\right)C^{n+i}X_j-\frac{2}{\eps^2}\sum_{i,k=1}^n\left(A^iB^{n+i}-A^{n+i}B^i\right)C^{n+k}X_i\\
            &=\frac{1}{\eps^2}\sum_{i,j=1}^n\left(A^iB^{n+j}-A^{n+j}B^i\right)C^jY_i-\frac{1}{\eps^2}\sum_{i,j=1}^n\left(A^jB^{n+i}-A^{n+i}B^j\right)C^{n+j}X_i+\frac{2}{\eps^2}\left\langle J(\A),\B\right\rangle J(\C).
        \end{split}
    \end{equation*}
    In addition,
    \begin{equation*}
        \begin{split}
            \mathrm{III}&=\sum_{i,k=1}^n\left(A^iB^{2n+1}-A^{2n+1}B^i\right)C^k \R^\eps (X_i,\eps T)X_k+C^{2n+1}\sum_{i=1}^n\left(A^iB^{2n+1}-A^{2n+1}B^i\right) \R^\eps (X_i,\eps T)\eps T\\
            &=-\frac{1}{\eps^2}\sum_{i=1}^n\left(A^iB^{2n+1}-A^{2n+1}B^i\right)C^i\eps T+\frac{C^{2n+1}}{\eps^2}\sum_{i=1}^n\left(A^iB^{2n+1}-A^{2n+1}B^i\right)X_i\\
            &=-\frac{B^{2n+1}}{\eps^2}\sum_{j=1}^nA^jC^j\eps T+\frac{A^{2n+1}}{\eps^2}\sum_{j=1}^nB^jC^j\eps T+\frac{B^{2n+1}C^{2n+1}}{\eps^2}\sum_{i=1}^nA^iX_i-\frac{A^{2n+1}C^{2n+1}}{\eps^2}\sum_{i=1}^nB^iX_i.
        \end{split}
    \end{equation*}
    Furthermore,
    \begin{equation*}
        \mathrm{IV}=\frac{1}{\eps^2}\sum_{i,j=1}^nA^{n+i}B^{n+j}C^iX_j-\frac{1}{\eps^2}\sum_{i,j=1}^nA^{n+i}B^{n+j}C^jX_i=\frac{1}{\eps^2}\sum_{i,j=1}^nA^{n+j}B^{n+i}C^jX_i-\frac{1}{\eps^2}\sum_{i,j=1}^nA^{n+i}B^{n+j}C^jX_i,
    \end{equation*}
    Finally,
    \begin{equation*}
        \begin{split}
            \mathrm{V}&=\sum_{i,k=1}^n\left(A^{n+i}B^{2n+1}-A^{2n+1}B^{n+i}\right)C^{n+k} \R^\eps (Y_i,\eps T)Y_k\\
            &\quad+C^{2n+1}\sum_{i=1}^n\left(A^{n+i}B^{2n+1}-A^{2n+1}B^{n+i}\right) \R^\eps (Y_i,\eps T)\eps T\\
            &=-\frac{1}{\eps^2}\sum_{i=1}^n\left(A^{n+i}B^{2n+1}-A^{2n+1}B^{n+i}\right)C^{n+i}\eps T+\frac{C^{2n+1}}{\eps^2}\sum_{i=1}^n\left(A^{n+i}B^{2n+1}-A^{2n+1}B^{n+i}\right)Y_i\\
            &=-\frac{B^{2n+1}}{\eps^2}\sum_{i=1}^nA^{n+i}C^{n+i}\eps T+\frac{A^{2n+1}}{\eps^2}\sum_{i=1}^nB^{n+i}C^{n+i}\eps T\\
            &\quad+\frac{B^{2n+1}C^{2n+1}}{\eps^2}\sum_{i=1}^nA^{n+i}Y_i-\frac{A^{2n+1}C^{2n+1}}{\eps^2}\sum_{i=1}^nB^{n+i}Y_i.
        \end{split}
    \end{equation*}
    By the above computations,
    \begin{equation*}
\begin{split}
    \mathrm{I}+\mathrm{II}+\mathrm{IV}&=\frac{2}{\eps^2}\left\langle J(\A),\B\right\rangle J(\C)+\frac{1}{\eps^2}\left\langle J(\A),\C\right\rangle \sum_{i=1}^nB^i Y_i-\frac{1}{\eps^2}\left\langle J(\A),\C\right\rangle \sum_{i=1}^nB^{n+i} X_i\\
    &\quad+\frac{1}{\eps^2}\left\langle J(\B),\C\right\rangle \sum_{i=1}^nA^{n+i} X_i-\frac{1}{\eps^2}\left\langle J(\B),\C\right\rangle \sum_{i=1}^nA^{i} Y_i\\
    &=\frac{2}{\eps^2}\left\langle J(\A),\B\right\rangle J(\C)+\frac{1}{\eps^2}\left\langle J(\A),\C\right\rangle J(\B)-\frac{1}{\eps^2}\left\langle J(\B),\C\right\rangle J(\A)
\end{split}        
    \end{equation*}
    and 
    \begin{equation*}
        \begin{split}
            \mathrm{III}+\mathrm{V}&=\frac{A^{2n+1}}{\eps^2}\langle\B,\C\rangle \eps T-\frac{B^{2n+1}}{\eps^2}\langle\A,\C\rangle \eps T+\frac{B^{2n+1}C^{2n+1}}{\eps^2}\A-\frac{A^{2n+1}C^{2n+1}}{\eps^2}\B.
        \end{split}
    \end{equation*}
    The thesis follows combining the above computations.
\end{proof}
The explicit expression of the Ricci curvature $\ric^\eps$ is a simple consequence of \Cref{riemepsexplicitprop}.
\begin{proposition}
    Let $\eps>0$. Let $\A,\B,\C\in\Gamma(T\hh^n)$. Then
    \begin{equation}\label{ricciepsilon}
        \begin{split}
            \ric^\eps(\B,\C)&=-\frac{2}{\eps^2}\left\langle \B,\C\right\rangle_\eps +\frac{(2n+2)}{\eps^2}\left\langle \B,\eps T\right\rangle_\eps\left\langle \C,\eps T\right\rangle_\eps
        \end{split}
    \end{equation}
    and 
    \begin{equation*}
        \left(\nabla^\eps_\A\ric^\eps\right)(\B,\C)=\frac{2n+2}{\eps^3}\left\langle J(\A),\B\right\rangle_\eps\left\langle \C,\eps T\right\rangle_\eps+\frac{2n+2}{\eps^3}\left\langle J(\A),\C\right\rangle_\eps\left\langle \B,\eps T\right\rangle_\eps
    \end{equation*}
    In particular, 
    \begin{equation}\label{nabaepsriccizerosudiagonale}
        \left(\nabla^\eps_\A\ric^\eps\right)(\A,\A)=0.
    \end{equation}
\end{proposition}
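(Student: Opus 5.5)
We trace the full-curvature formula of \Cref{riemepsexplicitprop} in the $\langle\cdot,\cdot\rangle_\eps$-orthonormal frame $Z_1^\eps,\ldots,Z_{2n+1}^\eps$. We use the convention $\ric^\eps(\B,\C)=\sum_{k}\R^\eps(Z_k^\eps,\B,\C,Z_k^\eps)$; the opposite convention would only change the sign. Plugging \eqref{riemannepsilonquattrozero} with $\A=\D=Z_k^\eps$ and summing over $k$ gives three groups of terms.

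\textbf{The $J$-terms.} These yield
\[
\frac{1}{\eps^2}\sum_k\Big(2\langle JZ_k^\eps,\B\rangle_\eps\langle J\C,Z_k^\eps\rangle_\eps+\langle JZ_k^\eps,\C\rangle_\eps\langle J\B,Z_k^\eps\rangle_\eps-\langle J\B,\C\rangle_\eps\langle JZ_k^\eps,Z_k^\eps\rangle_\eps\Big).
\]
The last term vanishes, since $\langle JZ_k^\eps,Z_k^\eps\rangle_\eps=0$. Only horizontal $k$ contribute to the first two, and there $J$ is skew. So the first two terms give $-2\langle J\C,J\B\rangle-\langle J\B,J\C\rangle=-3\langle \B^\hhh,\C^\hhh\rangle$.

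\textbf{The $T$-component terms.} With $k$ ranging over all indices, these give
\[
\frac{1}{\eps^2}\Big(\textstyle\sum_k (Z_k^{2n+1})^2\langle \B,\C\rangle_\eps-\B^{2n+1}\C^{2n+1}+\B^{2n+1}\C^{2n+1}(2n+1)-\B^{2n+1}\C^{2n+1}\Big).
\]
Since $\sum_k (Z_k^{2n+1})^2=1$, this is
\[
\frac{1}{\eps^2}\Big(\langle\B,\C\rangle_\eps+(2n-1)\B^{2n+1}\C^{2n+1}\Big).
\]

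\textbf{Combining.} Write $\langle\B^\hhh,\C^\hhh\rangle=\langle\B,\C\rangle_\eps-\B^{2n+1}\C^{2n+1}$. Adding the two groups gives
\[
\frac{1}{\eps^2}\Big(-2\langle\B,\C\rangle_\eps+(2n+2)\B^{2n+1}\C^{2n+1}\Big),
\]
which is \eqref{ricciepsilon}, because $\B^{2n+1}=\langle\B,\eps T\rangle_\eps$. This bookkeeping is the main risk: the signs and the index conventions of \eqref{riemannepsilonquattrozero} must be tracked carefully. As a check, one can evaluate on $\B=\C=X_1$ and $\B=\C=\eps T$ directly from the table in the proof of \Cref{riemepsexplicitprop}. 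For example, $\ric^\eps(\eps T,\eps T)=\sum_i\big(\R^\eps(X_i,\eps T,\eps T,X_i)+\R^\eps(Y_i,\eps T,\eps T,Y_i)\big)=2n/\eps^2$, which matches $(-2+2n+2)/\eps^2$.

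\textbf{Covariant derivative.} In \eqref{ricciepsilon} the metric term is parallel, so only the second term contributes. Then
\[
(\nabla^\eps_\A\ric^\eps)(\B,\C)=\frac{2n+2}{\eps^2}\Big(\langle\B,\nabla^\eps_\A\eps T\rangle_\eps\langle\C,\eps T\rangle_\eps+\langle\B,\eps T\rangle_\eps\langle\C,\nabla^\eps_\A\eps T\rangle_\eps\Big).
\]
By \eqref{connessioneerotazione}, $\nabla^\eps_\A\eps T=\eps^{-1}J(\A)$, so $\langle \B,J\A\rangle_\eps=\langle J\A,\B\rangle_\eps$. This gives the stated formula.

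\textbf{Vanishing on the diagonal.} Setting $\B=\C=\A$, each term contains the factor $\langle J\A,\A\rangle_\eps=0$, because $J$ is skew on $\hhh$ and kills $T$. This proves \eqref{nabaepsriccizerosudiagonale}.
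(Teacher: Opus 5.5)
Your proposal is correct and follows the paper's own proof essentially step by step. Both trace \eqref{riemannepsilonquattrozero} over the orthonormal frame $Z^\eps_k$ to obtain $\eps^2\ric^\eps(\B,\C)=-3\langle J(\B),J(\C)\rangle+\langle\B,\C\rangle_\eps+(2n-1)B^{2n+1,\eps}C^{2n+1,\eps}$, and both then differentiate using that the metric is $\nabla^\eps$-parallel together with $\nabla^\eps_\A\eps T=\eps^{-1}J(\A)$.
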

\begin{proof}
    By definition,
    \begin{equation*}
        \begin{split}
            \eps^2\ric(\B,\C)&
    =\sum_{i=1}^{2n}\Big(2\left\langle J(Z_i),\B\right\rangle _\eps \left\langle J(\C),Z_i\right\rangle_\eps+\left\langle J(Z_i),\C\right\rangle_\eps \left\langle J(\B),Z_i\right\rangle_\eps-\left\langle J(\B),\C\right\rangle _\eps \left\langle J(Z_i),Z_i\right\rangle_\eps\Big)\\
            &\quad+\langle\B,\C\rangle _\eps-B^{2n+1,\eps}C^{2n+1,\eps}+(2n+1)B^{2n+1,\eps}C^{2n+1,\eps}-B^{2n+1,\eps}C^{2n+1,\eps}.\\
            &=-3\left\langle J(\B),J(\C)\right\rangle +\langle\B,\C\rangle _\eps+(2n-1)B^{2n+1,\eps}C^{2n+1,\eps}\\
            &=-2\left\langle \B,\C\right\rangle +(2n+2)B^{2n+1,\eps}C^{2n+1,\eps}.
        \end{split}
    \end{equation*}
    Therefore, since $\nabla^\eps\left\langle\cdot,\cdot\right\rangle_\eps\equiv 0$,
    \begin{equation*}
        \begin{split}
            \eps^2\left(\nabla_\A\ric^\eps\right)(\B,\C)&=(2n+2)\big(\A\left(\left\langle \B,\eps T\right\rangle_\eps\left\langle \C,\eps T\right\rangle_\eps\right)-\left\langle \nabla^\eps_\A\B,\eps T\right\rangle_\eps\left\langle \C,\eps T\right\rangle_\eps-\left\langle \B,\eps T\right\rangle_\eps\left\langle \nabla^\eps_\A\C,\eps T\right\rangle_\eps\big)\\
            &=(2n+2)\big(\left\langle \B,\nabla^\eps_\A\eps T\right\rangle_\eps\left\langle \C,\eps T\right\rangle_\eps+\left\langle \B,\eps T\right\rangle_\eps\left\langle \C,\nabla^\eps_\A\eps T\right\rangle_\eps\big)\\
            \overset{\eqref{connessioneerotazione}}&{=}\frac{2n+2}{\eps}\left\langle J(\A),\B\right\rangle_\eps\left\langle \C,\eps T\right\rangle_\eps+\frac{2n+2}{\eps}\left\langle J(\A),\C\right\rangle_\eps\left\langle \B,\eps T\right\rangle_\eps.
        \end{split}
    \end{equation*}
    Finally, \eqref{nabaepsriccizerosudiagonale} is straightforward.
       \end{proof}
   \subsection{Extrinsic curvatures}\label{sec_extrinsiccurvaturesriemapp} Fix $\eps>0$. We adapt some aspects of \Cref{subsechypersurf} to the Riemannian structure $(\hh^n,\langle\cdot,\cdot\rangle_\eps).$
We assign an upper index $\eps$ to the geometric quantities associated with $S$ and induced by $\left\langle\cdot,\cdot\right\rangle_\eps$. Accordingly,
\begin{equation}\label{normaleriemannianaepsilon}
    \n^\eps=\frac{1}{\sqrt{1+\eps^2\alpha^2}}\vh+\frac{\eps\alpha}{\sqrt{1+\eps^2\alpha^2}}\eps T.
\end{equation}
Therefore, $TS$ admits the $\left\langle\cdot,\cdot\right\rangle_\eps$-orthonormal  decomposition $TS=\hhh' TS\oplus\spann J(\vh)\oplus\spann \s^\eps,$ 
where 
\begin{equation}\label{formadiesseepsilon}
    \s^\eps=-\frac{\eps\alpha}{\sqrt{1+\eps^2\alpha^2}}\vh+\frac{1}{\sqrt{1+\eps^2\alpha^2}}\eps T.
\end{equation}
If $\sigma^\eps$ is the Riemannian surface measure induced by $\langle\cdot,\cdot\rangle_\eps$, it is easy to check that 
\begin{equation}\label{areaelementeps}
    \sigma^\eps=\frac{\sqrt{1+\eps^2\alpha^2}}{\eps\sqrt{1+\alpha^2}}\sigma^1.
\end{equation}
In the rest of this section, we express some Riemannian extrinsic quantities with respect to the relevant sub-Riemannian geometric objects. We begin with the second fundamental form.
\begin{lemma}\label{lemmasecondaformafondriem}
    Let $\eps>0$. Let $\A,\B\in\Gamma(\hhh T S)$. Let $\C\in\Gamma(\hhh'TS)$. Then
    \begin{equation}\label{formaepsenonepstuttoorizz}
        \begin{alignedat}{2}                  h^\eps(\A,\B)&=\frac{1}{\sqrt{1+\eps^2\alpha^2}}\tilde h^\hhh(\A,\B),&\qquad
        h^\eps(\C,\s^\eps)&=\frac{\eps\C\alpha}{1+\eps^2\alpha^2},\\
        h^\eps(J(\vh),\s^\eps)&=\frac{1}{\eps}+\frac{\eps J(\vh)\alpha}{1+\eps^2\alpha^2},&\qquad h^\eps(\s^\eps,\s^\eps)&=\frac{\eps^2\s\alpha}{\left(1+\eps^2\alpha^2\right)^{\frac{3}{2}}}.
        \end{alignedat}
    \end{equation}
\end{lemma}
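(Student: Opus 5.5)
Since all four formulas are values of $h^\eps(\mathbf U,\mathbf W)=\langle\nabla^\eps_{\mathbf U}\n^\eps,\mathbf W\rangle_\eps$, the plan is to differentiate the explicit expression \eqref{normaleriemannianaepsilon} directly. Write $\lambda\coloneqq(1+\eps^2\alpha^2)^{-1/2}$, so that $\n^\eps=\lambda\vh+\lambda\eps\alpha\,\eps T$ near $S\setminus S_0$, with $\vh$ extended as in \eqref{normcondist} and $\alpha=Td$ extended accordingly. Only the values on $S$ matter, since $\mathbf U$ is tangent. For $\mathbf U\in\Gamma(TS)$ the Leibniz rule gives
\begin{equation*}
\nabla^\eps_{\mathbf U}\n^\eps=(\mathbf U\lambda)\vh+\lambda\nabla^\eps_{\mathbf U}\vh+\mathbf U(\lambda\eps\alpha)\,\eps T+\lambda\alpha\, J(\mathbf U),
\end{equation*}
where the last term comes from \eqref{connessioneerotazione}. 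The first term is $\langle\cdot,\cdot\rangle_\eps$-orthogonal to every tangent vector to which we pair it except $\s^\eps$, because $\vh\perp\hhh TS$. For $\nabla^\eps_{\mathbf U}\vh$ I will use \eqref{levicivitavspseudohermitian} with $\B=\vh$ horizontal:
\begin{equation*}
\nabla^\eps_{\mathbf U}\vh=\nabla_{\mathbf U}\vh+\frac{\langle\mathbf U,\eps T\rangle_\eps}{\eps}J(\vh)+\langle\mathbf U,J(\vh)\rangle T.
\end{equation*}
Here $\nabla_{\mathbf U}\vh$ is horizontal and orthogonal to $\vh$.

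The first step treats $\A,\B\in\Gamma(\hhh TS)$. The $\vh$ and $T$ components drop out, and $\langle\A,\eps T\rangle_\eps=0$. This leaves
\begin{equation*}
h^\eps(\A,\B)=\lambda\big(h^\hhh(\A,\B)+\alpha\langle J(\A),\B\rangle\big)=\lambda\,\tilde h^\hhh(\A,\B)
\end{equation*}
by \eqref{rapptrahetildacca2026}.

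The second step concerns $\C\in\Gamma(\hhh'TS)$ and $J(\vh)$ paired against $\s^\eps=-\eps\alpha\lambda\vh+\lambda\eps T$. Since $h^\eps$ is symmetric, it is easiest to compute $h^\eps(\C,\s^\eps)=\langle\nabla^\eps_{\C}\n^\eps,\s^\eps\rangle_\eps$. The pairing picks up $-\eps\alpha\lambda(\C\lambda)+\lambda\,\C(\lambda\eps\alpha)$, which simplifies to $\lambda^2\eps\,\C\alpha=\eps\C\alpha/(1+\eps^2\alpha^2)$. This uses $\langle\nabla^\eps_\C\vh,\vh\rangle_\eps=0$ and $\langle\nabla^\eps_\C\vh,\eps T\rangle_\eps=\eps\langle\C,J(\vh)\rangle=0$, the latter because $\C\perp J(\vh)$. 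For $J(\vh)$ the same algebra yields $\eps J(\vh)\alpha/(1+\eps^2\alpha^2)$. There is one extra contribution from the $T$-component of $\nabla^\eps_{J(\vh)}\vh$, namely $\langle J(\vh),J(\vh)\rangle T=T$, paired with $\lambda\cdot\lambda\eps T$. There is also a contribution from $\lambda\alpha J(J(\vh))=-\lambda\alpha\vh$, paired with $-\eps\alpha\lambda\vh$. Their sum is $\lambda^2(1/\eps+\eps\alpha^2)=1/\eps$, giving the stated term.

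The last step, $h^\eps(\s^\eps,\s^\eps)$, I expect to be the main obstacle, because $\s^\eps$ has a vertical component and \eqref{levicivitavspseudohermitian} then produces extra $J(\vh)$ terms. I would compute $\nabla^\eps_{\s^\eps}\n^\eps$ by expanding along $\s^\eps=-\eps\alpha\lambda\vh+\lambda\eps T$. The useful facts are these. First, $\nabla_{\vh}\vh=-2\alpha J(\vh)$ by \eqref{propvh5}, and $\nabla_T\vh$ is horizontal. Second, $J(\s^\eps)$ is proportional to $J(\vh)$. Third, the $J(\vh)$-contributions are orthogonal to $\s^\eps$ and therefore vanish in the pairing. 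What survives is $\lambda^2\eps\,\s^\eps\alpha$ from the derivative of the coefficients, exactly as in the second step, plus the $T$-component of $\nabla^\eps_{\s^\eps}\vh$, which is $\langle\s^\eps,J(\vh)\rangle T=0$. Since $\s^\eps=\lambda\eps\,\s$, this gives $\eps^2\s\alpha\,\lambda^3$, as claimed.
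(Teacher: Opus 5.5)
Your proposal is correct and takes essentially the paper's route: both differentiate the explicit normal \eqref{normaleriemannianaepsilon} along tangent directions and pair. The paper differs only in organization: it treats the three $\s^\eps$-pairings at once by proving $h^\eps(\D,\s^\eps)=\frac{1}{\eps}\langle\D,J(\vh)\rangle+\frac{\eps\,\D\alpha}{1+\eps^2\alpha^2}$ for every $\D\in\Gamma(TS)$, moving the derivative onto $\eps T$ by metric compatibility and applying \eqref{connessioneerotazione}, so it needs neither \eqref{levicivitavspseudohermitian} nor \eqref{propvh5}.

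One harmless slip: $\langle\nabla^\eps_{\C}\vh,\eps T\rangle_\eps=\frac{1}{\eps}\langle\C,J(\vh)\rangle$, not $\eps\langle\C,J(\vh)\rangle$. You use the correct value $\langle T,\eps T\rangle_\eps=1/\eps$ in the $J(\vh)$ case, and the term vanishes anyway for $\C\in\Gamma(\hhh'TS)$.
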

\begin{proof}
    First, since
    \begin{equation}\label{utilenellaproofmadatogliere}
        \left\langle \vh,\B\right\rangle=\left\langle\eps T,\B\right\rangle_\eps=0,
    \end{equation}
    then
    \begin{equation*}
        \begin{split}
            h^\eps(\A,\B)
            \overset{\eqref{normaleriemannianaepsilon}}&{=}\left\langle\nabla^\eps_\A\left(\frac{1}{\sqrt{1+\eps^2\alpha^2}}\vh\right),\B\right\rangle_\eps+\left\langle\nabla^\eps_\A\left(\frac{\eps\alpha}{\sqrt{1+\eps^2\alpha^2}}\eps T\right),\B\right\rangle_\eps\\
            \overset{\eqref{utilenellaproofmadatogliere}}&{=}\frac{1}{\sqrt{1+\eps^2\alpha^2}}\Big(\left\langle\nabla^\eps_\A\vh,\B\right\rangle_\eps+\eps\alpha\left\langle\nabla^\eps_\A\eps T,\B\right\rangle_\eps\Big)\\
            \overset{\eqref{connessioneerotazione},\eqref{relazionetraleconnessioni}}&{=}\frac{1}{\sqrt{1+\eps^2\alpha^2}}\Big(h^\hhh(\A,\B)+\alpha\left\langle J(\A),\B\right\rangle\Big)\\
            \overset{\eqref{rapptrahetildacca2026}}&{=}\frac{1}{\sqrt{1+\eps^2\alpha^2}}\tilde h^\hhh(\A,\B).
        \end{split}
    \end{equation*}
    Let $\D\in\Gamma(TS)$. Then
    \begin{equation*}
        \begin{split}
            h^\eps&(\D,\s^\eps)=\left\langle\nabla^\eps_\D\n^\eps,\s^\eps\right\rangle_\eps\\
            \overset{\eqref{normaleriemannianaepsilon}}&{=}\frac{1}{\sqrt{1+\eps^2\alpha^2}}\left\langle\nabla^\eps_\D\left(\frac{1}{\sqrt{1+\eps^2\alpha^2}}\vh\right),-\eps\alpha \vh+\eps T\right\rangle_\eps+\frac{1}{\sqrt{1+\eps^2\alpha^2}}\left\langle\nabla^\eps_\D\left(\frac{\eps\alpha}{\sqrt{1+\eps^2\alpha^2}}\eps T\right),-\eps\alpha \vh+\eps T\right\rangle_\eps\\
            &=-\frac{\eps\alpha}{\sqrt{1+\eps^2\alpha^2}}\D\left(\frac{1}{\sqrt{1+\eps^2\alpha^2}}\right)+\frac{1}{1+\eps^2\alpha^2}\Big\langle\nabla^\eps_\D\vh,-\eps\alpha \vh+\eps T\Big\rangle_\eps\\
            &+\frac{\eps\alpha}{\sqrt{1+\eps^2\alpha^2}}\D\left(\frac{1}{\sqrt{1+\eps^2\alpha^2}}\right)+\frac{\eps\D\alpha}{1+\eps^2\alpha^2}+\frac{\eps\alpha}{1+\eps^2\alpha^2}\Big\langle\nabla^\eps_\D\eps T,-\eps\alpha \vh+\eps T\Big\rangle_\eps\\
            &=\frac{1}{1+\eps^2\alpha^2}\Big\langle\nabla^\eps_\D\vh,\eps T\Big\rangle_\eps-\frac{\eps^2\alpha^2}{1+\eps^2\alpha^2}\Big\langle\nabla^\eps_\D\eps T, \vh\Big\rangle_\eps+\frac{\eps\D\alpha}{1+\eps^2\alpha^2}\\
            \overset{\eqref{connessioneerotazione}}&{=}-\frac{1}{\eps(1+\eps^2\alpha^2)}\left\langle J(\D),\vh\right\rangle_\eps-\frac{\eps^2\alpha^2}{\eps(1+\eps^2\alpha^2)}\left\langle J(\D), \vh\right\rangle_\eps+\frac{\eps\D\alpha}{1+\eps^2\alpha^2}\\
            &=\frac{1}{\eps}\left\langle \D,J(\vh)\right\rangle+\frac{\eps\D\alpha}{1+\eps^2\alpha^2}.
        \end{split}
    \end{equation*}
\end{proof}
A first trivial consequence of \Cref{lemmasecondaformafondriem} is the behavior of the mean curvature.
\begin{corollary}
    Let $p\in S\setminus S_0$. Then
    \begin{equation}\label{curvepsecurvh}
    \begin{split}
         H^\eps&=\frac{1}{\sqrt{1+\eps^2\alpha^2}}H^\hhh+\frac{\eps^2\s\alpha}{\left(1+\eps^2\alpha^2\right)^{\frac{3}{2}}}.
    \end{split}
    \end{equation}
    In particular,
    \begin{equation}\label{convhepstoh}
        \A_1\cdots\A_k H^\eps\xrightarrow[\eps\to 0]{}\A_1\cdots\A_k H^\hhh\text{ locally uniformly on $S\setminus S_0$ for any $k\in\mathbb N$, $\A_1,\ldots,\A_k\in\Gamma(TS)$.}
    \end{equation}
   \end{corollary}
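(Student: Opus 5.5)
The identity \eqref{curvepsecurvh} follows by tracing the second fundamental form in an adapted orthonormal frame and reading off the entries from \Cref{lemmasecondaformafondriem}. Near $p\in S\setminus S_0$ we choose a local $\langle\cdot,\cdot\rangle_\eps$-orthonormal frame of $TS$ compatible with the decomposition $TS=\hhh'TS\oplus\spann J(\vh)\oplus\spann\s^\eps$. Concretely, we take a local orthonormal frame $\E_1,\ldots,\E_{2n-2}$ of $\hhh'TS$, together with $\E_{2n-1}=J(\vh)$ and $\s^\eps$. Since $\langle\cdot,\cdot\rangle_\eps=\langle\cdot,\cdot\rangle$ on horizontal vectors, $\E_1,\ldots,\E_{2n-1}$ is simultaneously an orthonormal frame of $\hhh TS$ for the sub-Riemannian metric. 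Then
\begin{equation*}
    H^\eps=\sum_{i=1}^{2n-1}h^\eps(\E_i,\E_i)+h^\eps(\s^\eps,\s^\eps).
\end{equation*}

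By the first identity in \eqref{formaepsenonepstuttoorizz}, the horizontal sum equals $\frac{1}{\sqrt{1+\eps^2\alpha^2}}\trace\tilde h^\hhh=\frac{1}{\sqrt{1+\eps^2\alpha^2}}H^\hhh$. By the last identity in \eqref{formaepsenonepstuttoorizz}, the remaining term is $\frac{\eps^2\s\alpha}{(1+\eps^2\alpha^2)^{3/2}}$. This gives \eqref{curvepsecurvh}. The mixed entries $h^\eps(\C,\s^\eps)$ and $h^\eps(J(\vh),\s^\eps)$ are off-diagonal and do not enter the trace, so the divergent term $1/\eps$ does not contribute to $H^\eps$.

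For \eqref{convhepstoh}, note that $\alpha$, $\s\alpha$ and $H^\hhh$ are smooth on $S\setminus S_0$ and independent of $\eps$. Moreover, $\s=T-\alpha\vh$ is an $\eps$-independent tangent field. Hence
\begin{equation*}
    H^\eps=g(\eps^2\alpha^2)H^\hhh+\eps^2 k(\eps^2\alpha^2)\,\s\alpha,
\end{equation*}
where $g(s)=(1+s)^{-1/2}$ and $k(s)=(1+s)^{-3/2}$ are smooth on $[0,\infty)$.

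Applying $\A_1\cdots\A_k$ and expanding by the Leibniz and chain rules, every term is of one of two types. A term in which at least one derivative falls on $g(\eps^2\alpha^2)$ or $k(\eps^2\alpha^2)$ carries an explicit factor $\eps^2$, times continuous $\eps$-independent functions and derivatives of $g$, $k$ evaluated on the bounded set $\{\eps^2\alpha^2\}$. The only term without such a factor is $g(\eps^2\alpha^2)\,\A_1\cdots\A_kH^\hhh$. On a compact $K\subset S\setminus S_0$, $\alpha$ and all its derivatives are bounded, so all the $\eps^2$-terms tend to $0$ uniformly. Also $g(\eps^2\alpha^2)\to1$ uniformly on $K$, which yields the locally uniform convergence.

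The only delicate point is the frame choice in the first paragraph: $\s^\eps$ must be $\eps$-unit and orthogonal to $\hhh TS$. This is guaranteed by the orthonormal decomposition preceding \eqref{formadiesseepsilon}.
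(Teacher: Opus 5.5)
Your proposal is correct and is the argument the paper intends. The paper states this corollary as an immediate consequence of \Cref{lemmasecondaformafondriem}: one traces $h^\eps$ in the $\langle\cdot,\cdot\rangle_\eps$-orthonormal frame adapted to $\hhh'TS\oplus\spann J(\vh)\oplus\spann\s^\eps$, and your Leibniz/chain-rule argument spells out the locally uniform convergence that the paper leaves implicit.
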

   In the next result, the approximation occurs both in the second fundamental form and in its entries.
\begin{lemma}\label{lemmasecofondformcondueparametri}
    Let $p\in S\setminus S_0$. Let $f,g\in C^\infty(S\setminus S_0)$. Then 
    \begin{equation}\label{hepsgradepsprimadilimieteffbgyhygtfr}
        \begin{split}
            h^\eps\left(\nabla^{\eps,S}f,\nabla^{\eps,S}g\right)&=\frac{\tilde h^\hhh\left(\nabla^{\hhh,S}f,\nabla^{\hhh,S}g\right)}{\sqrt{1+\eps^2\alpha^2}}+\frac{J(\vh) f\s g+J(\vh)g\s f}{\sqrt{1+\eps^2\alpha^2}}\\
            &\quad+\frac{\eps^2\left\langle \nabla^{\hhh,S}\alpha,\s g\nabla^{\hhh,S}f+\s f\nabla^{\hhh,S}g\right\rangle}{\left(1+\eps^2\alpha^2\right)^\frac{3}{2}}+\frac{\eps^4\s\alpha\s f\s g}{\left(1+\eps^2\alpha^2\right)^\frac{5}{2}}.
        \end{split}
    \end{equation}
    In particular, if $(f^\eps)_\eps,\,(g^\eps)_\eps$ converge smoothly to $f$ and $g$ as in \eqref{convhepstoh}, then    \begin{equation}\label{convhepsgradeps}
        \begin{split}
        h^\eps\left(\nabla^{\eps,S}f^\eps,\nabla^{\eps,S}g^\eps\right)&\xrightarrow[\eps\to 0]{}\tilde h^\hhh\left(\nabla^{\hhh,S}f,\nabla^{\hhh,S}g\right)+ J(\vh)f\,\s g+J(\vh)g\,\s f \text{ locally uniformly on $S\setminus S_0$.}
        \end{split}
    \end{equation}
    
\end{lemma}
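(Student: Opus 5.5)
The plan is to expand $\nabla^{\eps,S}f$ and $\nabla^{\eps,S}g$ in the $\langle\cdot,\cdot\rangle_\eps$-orthonormal decomposition $TS=\hhh'TS\oplus\spann J(\vh)\oplus\spann\s^\eps$. Then we use bilinearity of $h^\eps$ together with the four formulas in \eqref{formaepsenonepstuttoorizz}.

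\textbf{Step 1: the gradient.} Choose a local orthonormal frame $\E_1,\ldots,\E_{2n-2}$ of $\hhh'TS$. Together with $\E_{2n-1}=J(\vh)$ it forms an orthonormal frame of $\hhh TS$, and $\s^\eps$ completes it to a $\langle\cdot,\cdot\rangle_\eps$-orthonormal frame of $TS$. Hence
\begin{equation*}
\nabla^{\eps,S}f=\nabla^{\hhh,S}f+(\s^\eps f)\,\s^\eps .
\end{equation*}
From \eqref{formadiesseepsilon} and $\s=T-\alpha\vh$ we get $\s^\eps=\frac{\eps}{\sqrt{1+\eps^2\alpha^2}}\s$, since $-\eps\alpha\vh+\eps T=\eps\s$. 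Therefore $\s^\eps f=\frac{\eps\,\s f}{\sqrt{1+\eps^2\alpha^2}}$.

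\textbf{Step 2: expand by bilinearity.} Write $P_f=\sum_{i\le 2n-2}(\E_if)\E_i\in\hhh'TS$, so that $\nabla^{\hhh,S}f=P_f+(J(\vh)f)J(\vh)$. The horizontal–horizontal term gives $\tilde h^\hhh(\nabla^{\hhh,S}f,\nabla^{\hhh,S}g)/\sqrt{1+\eps^2\alpha^2}$ by the first formula of \eqref{formaepsenonepstuttoorizz}.

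The mixed terms are $(\s^\eps g)\,h^\eps(\nabla^{\hhh,S}f,\s^\eps)$ and its symmetric counterpart; $h^\eps$ is symmetric, being a Riemannian second fundamental form. Using the second and third formulas,
\begin{equation*}
h^\eps(\nabla^{\hhh,S}f,\s^\eps)=\frac{\eps\langle\nabla^{\hhh,S}\alpha,\nabla^{\hhh,S}f\rangle}{1+\eps^2\alpha^2}+\frac{J(\vh)f}{\eps}.
\end{equation*}
Multiplying by $\s^\eps g=\eps\,\s g/\sqrt{1+\eps^2\alpha^2}$ produces the two middle terms of \eqref{hepsgradepsprimadilimieteffbgyhygtfr}. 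Here it is essential that the $1/\eps$ singularity is exactly compensated by the factor $\eps$ coming from $\s^\eps g$.

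The $\s^\eps\s^\eps$ term is $(\s^\eps f)(\s^\eps g)\,h^\eps(\s^\eps,\s^\eps)$. By the fourth formula this equals $\frac{\eps^2\s f\s g}{1+\eps^2\alpha^2}\cdot\frac{\eps^2\s\alpha}{(1+\eps^2\alpha^2)^{3/2}}$, which is the last term. Summing the three contributions yields \eqref{hepsgradepsprimadilimieteffbgyhygtfr}.

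\textbf{Step 3: convergence.} For \eqref{convhepsgradeps}, apply \eqref{hepsgradepsprimadilimieteffbgyhygtfr} to $f^\eps,g^\eps$. Smooth local uniform convergence gives locally uniform convergence of their first derivatives along the fixed vector fields $\E_i$, $J(\vh)$ and $\s$. Moreover $\alpha$ and its derivatives are locally bounded on $S\setminus S_0$. So the terms carrying $\eps^2$ and $\eps^4$ vanish, while $(1+\eps^2\alpha^2)^{-1/2}\to1$ locally uniformly, and the limit follows.

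The main point needing care is the bookkeeping in Step 2. One has to use the correct normalization $\s^\eps=\eps\s/\sqrt{1+\eps^2\alpha^2}$ to see that the divergent $1/\eps$ in $h^\eps(J(\vh),\s^\eps)$ is cancelled.
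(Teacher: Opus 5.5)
Your proof is correct and takes essentially the same route as the paper. Like the paper, you expand both tangential gradients in the $\langle\cdot,\cdot\rangle_\eps$-orthonormal frame adapted to $\hhh'TS\oplus\spann J(\vh)\oplus\spann\s^\eps$, apply the identities of \eqref{formaepsenonepstuttoorizz} block by block (with $\s^\eps=\eps\s/\sqrt{1+\eps^2\alpha^2}$ absorbing the $1/\eps$ in $h^\eps(J(\vh),\s^\eps)$), regroup the $\eps J(\vh)\alpha$ correction into $\langle\nabla^{\hhh,S}\alpha,\nabla^{\hhh,S}f\rangle$, and then let $\eps\to0$.
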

\begin{proof}
       Fix $p\in S\setminus S_0$. Let $\E_1,\ldots,\E_{n-1},\E_{n+1},\ldots,\E_{2n-1}$ be a local orthonormal frame of $\hhh'TS$. Set $\E_n=J(\vh)$ and $\E_{2n}=\s^\eps$. In this way, $\E_1,\ldots,\E_{2n}$ is a local orthonormal frame of $TS$. Then
       \begin{equation*}
           \begin{split}
               h^\eps&\left(\nabla^{\eps,S}f,\nabla^{\eps,S}g\right)=\sum_{i,j=1}^{2n}h^\eps(\E_i,\E_j)\E_i f\E_i g\\
               &=\sum_{i,j=1}^{2n-1}h^\eps(\E_i,\E_j)\E_i f\E_i g+\sum_{i=1}^{2n-1}h^\eps(\E_i,\s^\eps)\left(\E_i f\s^\eps g+\E_i g\s^\eps f\right)+h^\eps(\s^\eps,\s^\eps)\s^\eps f\s^\eps g\\
               \overset{\eqref{formaepsenonepstuttoorizz}}&{=}\frac{\tilde h^\hhh\left(\nabla^{\hhh,S}f,\nabla^{\hhh,S}g\right)}{\sqrt{1+\eps^2\alpha^2}}+\frac{\eps^2}{\left(1+\eps^2\alpha^2\right)^\frac{3}{2}}\sum_{\substack{i=1\\ i\neq n}}^{2n-1}\E_i\alpha\left(\E_i f\s g+\E_i g\s f\right)\\
               &\quad+\frac{1}{\sqrt{1+\eps^2\alpha^2}}\left(1+\frac{\eps^2J(\vh)\alpha}{1+\eps^2\alpha^2}\right)\left(J(\vh) f\s g+J(\vh)g\s f\right)+\frac{\eps^4\s\alpha\s f\s g}{\left(1+\eps^2\alpha^2\right)^\frac{5}{2}}\\
               &=\frac{\tilde h^\hhh\left(\nabla^{\hhh,S}f,\nabla^{\hhh,S}g\right)}{\sqrt{1+\eps^2\alpha^2}}+\frac{J(\vh) f\s g+J(\vh)g\s f}{\sqrt{1+\eps^2\alpha^2}}+\frac{\eps^2\left\langle \nabla^{\hhh,S}\alpha,\s g\nabla^{\hhh,S}f+\s f\nabla^{\hhh,S}g\right\rangle}{\left(1+\eps^2\alpha^2\right)^\frac{3}{2}}+\frac{\eps^4\s\alpha\s f\s g}{\left(1+\eps^2\alpha^2\right)^\frac{5}{2}}.
           \end{split}
       \end{equation*}
       In particular, \eqref{convhepsgradeps} follows by \eqref{hepsgradepsprimadilimieteffbgyhygtfr}.
\end{proof}
The next convergence result has been achieved, through a different approach, in \cite{MR5029815}. 
\begin{lemma}
    Let $p\in S\setminus S_0$. Then
    \begin{equation*}
        \left|h^\eps\right|^2+\ric^\eps(\n^\eps,\n^\eps)=\frac{|\tilde h^\hhh|^2+4 J(\vh)\alpha+(2n+2)\alpha^2}{1+\eps^2\alpha^2}+\frac{2\eps^2\left|\nabla^{\hhh,S}\alpha\right|^2}{\left(1+\eps^2\alpha^2\right)^2}+\frac{\eps^4\left(\s\alpha\right)^2}{\left(1+\eps^2\alpha^2\right)^3}
    \end{equation*}
    In particular,
    \begin{equation}\label{convergenzadelpotenzialejacobi}
        \left|h^\eps\right|^2+\ric^\eps(\n^\eps,\n^\eps)\xrightarrow[\eps \to 0]{}|\tilde h^\hhh|^2+4 J(\vh)\alpha+(2n+2)\alpha^2\qquad\text{  locally uniformly on $S\setminus S_0$.}
    \end{equation}
\end{lemma}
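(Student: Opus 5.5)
We compute the two pieces separately, in the adapted $\langle\cdot,\cdot\rangle_\eps$-orthonormal frame of $TS$ used in the proof of \Cref{lemmasecofondformcondueparametri}: $\E_1,\ldots,\E_{2n-1}$ spanning $\hhh TS$ with $\E_n=J(\vh)$, and $\E_{2n}=\s^\eps$.

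\emph{Ricci term.} By \eqref{ricciepsilon} and \eqref{normaleriemannianaepsilon}, using $|\n^\eps|_\eps=1$ and $\langle\n^\eps,\eps T\rangle_\eps=\eps\alpha/\sqrt{1+\eps^2\alpha^2}$, we get
\begin{equation*}
\ric^\eps(\n^\eps,\n^\eps)=-\frac{2}{\eps^2}+\frac{(2n+2)\alpha^2}{1+\eps^2\alpha^2}.
\end{equation*}
The term $-2/\eps^2$ diverges, so it must be cancelled by the second fundamental form.

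\emph{Squared norm of $h^\eps$.} We expand
\begin{equation*}
|h^\eps|^2=\sum_{i,j\le 2n-1}h^\eps(\E_i,\E_j)^2+2\sum_{i\le 2n-1}h^\eps(\E_i,\s^\eps)^2+h^\eps(\s^\eps,\s^\eps)^2
\end{equation*}
and insert \eqref{formaepsenonepstuttoorizz} term by term.
\begin{itemize}
\item[(i)] The horizontal block gives $|\tilde h^\hhh|^2/(1+\eps^2\alpha^2)$.
\item[(ii)] The $\hhh'TS$ mixed terms give $2\eps^2\sum_{i\neq n}(\E_i\alpha)^2/(1+\eps^2\alpha^2)^2$.
\item[(iii)] The $J(\vh)$ mixed term gives
\begin{equation*}
2\Big(\frac1\eps+\frac{\eps J(\vh)\alpha}{1+\eps^2\alpha^2}\Big)^2=\frac{2}{\eps^2}+\frac{4J(\vh)\alpha}{1+\eps^2\alpha^2}+\frac{2\eps^2(J(\vh)\alpha)^2}{(1+\eps^2\alpha^2)^2}.
\end{equation*}
Its last piece combines with (ii) to produce $2\eps^2|\nabla^{\hhh,S}\alpha|^2/(1+\eps^2\alpha^2)^2$.
\item[(iv)] The $\s^\eps\s^\eps$ term gives $\eps^4(\s\alpha)^2/(1+\eps^2\alpha^2)^3$.
\end{itemize}

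\emph{Summing.} The $2/\eps^2$ from (iii) cancels the $-2/\eps^2$ from the Ricci term. The remaining terms assemble exactly into the claimed identity.

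\emph{Convergence.} Since $\alpha$, $\nabla^{\hhh,S}\alpha$, $\s\alpha$ and $\tilde h^\hhh$ are smooth and independent of $\eps$ on $S\setminus S_0$, they are locally bounded there. All correction terms are therefore locally uniformly $O(\eps^2)$, and the denominators tend to $1$; this gives \eqref{convergenzadelpotenzialejacobi}.

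\emph{Main point of care.} The only delicate step is the precise form of $h^\eps(J(\vh),\s^\eps)$. The proof of \Cref{lemmasecondaformafondriem} gives $h^\eps(\D,\s^\eps)=\frac1\eps\langle\D,J(\vh)\rangle+\frac{\eps\D\alpha}{1+\eps^2\alpha^2}$. Applied with $\D=J(\vh)$ this matches the value used in (iii), and it also confirms that the cross term in (iii) is exactly $4J(\vh)\alpha/(1+\eps^2\alpha^2)$. One further check: $h^\eps$ restricted to the horizontal block is symmetric (it equals $\tilde h^\hhh/\sqrt{1+\eps^2\alpha^2}$), so no antisymmetric contribution is lost in (i).
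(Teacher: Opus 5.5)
Your proof is correct and follows the paper's argument essentially verbatim. You use the same adapted frame with $\E_n=J(\vh)$ and $\E_{2n}=\s^\eps$, evaluate $\ric^\eps(\n^\eps,\n^\eps)$ via \eqref{ricciepsilon}, and substitute \eqref{formaepsenonepstuttoorizz} term by term, so that the $2/\eps^2$ coming from $2h^\eps(J(\vh),\s^\eps)^2$ cancels the $-2/\eps^2$ in the Ricci term.
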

\begin{proof}
     Let $\E_1,\ldots,\E_{2n-1}$ be as in the proof of \Cref{lemmasecofondformcondueparametri}. First, by \eqref{ricciepsilon} and \eqref{normaleriemannianaepsilon}, 
     \begin{equation*}
         \ric^\eps(\n^\eps,\n^\eps)=-\frac{2}{\eps^2}+\frac{(2n+2)\alpha^2}{1+\eps^2\alpha^2}.
     \end{equation*}
     Moreover,
     \begin{equation*}
         \begin{split}
             \left|h^\eps\right|^2&=\sum_{i,j=1}^{2n-1}h^\eps(\E_i,\E_j)^2+2h^\eps(J(\vh),\s^\eps)^2+2\sum_{\substack{i=1 \\ i\neq n}}^{2n-1}h^\eps(\E_i,\s^\eps)^2+h^\eps(\s^\eps,\s^\eps)^2\\
         \overset{\eqref{formaepsenonepstuttoorizz}}&{=}\frac{|\tilde h^\hhh|^2}{1+\eps^2\alpha^2}+\frac{2}{\eps^2}+\frac{4 J(\vh)\alpha}{1+\eps^2\alpha^2}+\frac{2\eps^2\left(J(\vh)\alpha\right)^2}{\left(1+\eps^2\alpha^2\right)^2}+\frac{2\eps^2}{\left(1+\eps^2\alpha^2\right)^2}\sum_{\substack{i=1 \\ i\neq n}}^{2n-1}\left(\E_i\alpha\right)^2+\frac{\eps^4\left(\s\alpha\right)^2}{\left(1+\eps^2\alpha^2\right)^3}\\
         &=\frac{2}{\eps^2}+\frac{|\tilde h^\hhh|^2+4 J(\vh)\alpha}{1+\eps^2\alpha^2}+\frac{2\eps^2\left|\nabla^{\hhh,S}\alpha\right|^2}{\left(1+\eps^2\alpha^2\right)^2}+\frac{\eps^4\left(\s\alpha\right)^2}{\left(1+\eps^2\alpha^2\right)^3}.
         \end{split}
     \end{equation*}
     The thesis follows combining the above computations.
\end{proof}
\Cref{lemmasecondaformafondriem} allows to compare the Laplace-Beltrami operator with $\hat \Delta^{\hhh,S}$. We need the following lemma.
\begin{lemma}
    Let $p\in S\setminus S_0$. Then 
    \begin{equation}\label{nabepssepsseps}
        \nabla^\eps_{\s^\eps}\s^\eps=-2\alpha J(\vh)-\frac{\eps^2\alpha}{1+\eps^2\alpha^2}\nabla^{\hhh,S}\alpha-\frac{\eps^2\s\alpha}{\left(1+\eps^2\alpha^2\right)^{\frac{3}{2}}}\n^\eps.
    \end{equation}
    \end{lemma}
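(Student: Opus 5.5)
Write $\lambda\coloneqq(1+\eps^2\alpha^2)^{-1/2}$, so that by \eqref{formadiesseepsilon} we have $\s^\eps=-\eps\alpha\lambda\,\vh+\lambda\,\eps T$. The plan is to expand $\nabla^\eps_{\s^\eps}\s^\eps$ with the Leibniz rule, evaluate each piece with \eqref{levicivitavspseudohermitian}, \eqref{connessioneerotazione} and \eqref{propvh5}, and then sort the result into the frame $\hhh'TS$, $J(\vh)$, $\s^\eps$, $\n^\eps$.

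\textbf{Step 1 (expansion).} Using the Leibniz rule,
\[
\nabla^\eps_{\s^\eps}\s^\eps=\s^\eps(-\eps\alpha\lambda)\,\vh+\s^\eps(\lambda)\,\eps T-\eps\alpha\lambda\,\nabla^\eps_{\s^\eps}\vh+\lambda\,\nabla^\eps_{\s^\eps}\eps T.
\]

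\textbf{Step 2 (the $\eps T$ term).} By \eqref{connessioneerotazione},
\[
\nabla^\eps_{\s^\eps}\eps T=\tfrac1\eps J(\s^\eps)=-\alpha\lambda\,J(\vh),
\]
since $J(T)=0$.

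\textbf{Step 3 (the $\vh$ term).} Apply \eqref{levicivitavspseudohermitian} with $\A=\s^\eps$ and $\B=\vh$. We have $\langle\vh,\eps T\rangle_\eps=0$, $\langle\s^\eps,\eps T\rangle_\eps=\lambda$ and $\langle\s^\eps,J(\vh)\rangle=0$, so
\[
\nabla^\eps_{\s^\eps}\vh=\nabla_{\s^\eps}\vh+\tfrac{\lambda}{\eps}J(\vh).
\]
Next, $\nabla_{\s^\eps}\vh=-\eps\alpha\lambda\nabla_\vh\vh+\lambda\eps\nabla_T\vh$, and $\nabla_\vh\vh=-2\alpha J(\vh)$ by \eqref{propvh5}. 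The term needing care is $\nabla_T\vh$. Since $\vh=\nabla^\hhh d$ is horizontal and $\nabla$ preserves $\hhh$, $\nabla_T\vh$ is horizontal and orthogonal to $\vh$. I would compute it through $\s=T-\alpha\vh$, i.e. $\nabla_T\vh=\nabla_\s\vh+\alpha\nabla_\vh\vh$, and use the known identities $\langle\nabla_\s\vh,\C\rangle$ for $\C\in\hhh TS$, which are expressed via $\nabla^{\hhh,S}\alpha$ and $J$ (from $\alpha=Td$ and the torsion \eqref{pseudotorsion}). I expect these to give
\[
\nabla_T\vh=J(\nabla^{\hhh,S}\alpha\text{-type terms})\ \text{or}\ \nabla^{\hhh,S}\alpha\ \text{plus a multiple of } J(\vh).
\]
The precise form has to be matched against the claimed coefficient $-\eps^2\alpha\lambda^2\nabla^{\hhh,S}\alpha$. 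Concretely, I would test $\langle\nabla_T\vh,\C\rangle$ by writing $\C\alpha=\C(Td)=T(\C d)+[\C,T]d$, noting $\C d=\langle\C,\vh\rangle$, and using that $[\C,T]=0$ for left-invariant frames, together with metric compatibility. This should yield $\langle\nabla_T\vh,\C\rangle=\C\alpha$ up to $J$-terms. The $J(\vh)$ component is pinned down by $\langle\nabla_T\vh,J(\vh)\rangle$, for which I would use $J(\vh)\alpha$ in the same way.

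\textbf{Step 4 (the scalar coefficients).} I would compute $\s^\eps(\lambda)=-\eps^2\alpha\lambda^3\,\s^\eps\alpha$ and expand $\s^\eps(\eps\alpha\lambda)$ similarly. Here $\s^\eps\alpha=\eps\lambda\,\s\alpha+\dots$, obtained by writing $\s^\eps=\eps\lambda\,\s+(\text{a }\vh\text{ component})$; note that $\eps\lambda\s=\eps\lambda T-\eps\alpha\lambda\vh$, which is consistent with the form of $\s^\eps$.

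\textbf{Step 5 (regrouping).} Collect everything into the $\vh$, $\eps T$, $J(\vh)$, $\hhh'$ components. The $\vh$ and $\eps T$ components should combine into $-\eps^2\lambda^3\,\s\alpha\;\n^\eps$. The $J(\vh)$ components should combine: $-\alpha\lambda^2$ from Step 2, $-\alpha\lambda^2$ from the $\tfrac{\lambda}{\eps}J(\vh)$ term multiplied by $-\eps\alpha\lambda$, together with the contribution from $\nabla_\vh\vh$ and $\nabla_T\vh$. These should sum to $-2\alpha J(\vh)$ plus the $J(\vh)$-part of $-\eps^2\alpha\lambda^2\nabla^{\hhh,S}\alpha$, using $\lambda^2(1+\eps^2\alpha^2)=1$. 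The $\hhh'TS$ part comes only from $\nabla_T\vh$. As a consistency check, $\langle\nabla^\eps_{\s^\eps}\s^\eps,\s^\eps\rangle_\eps=0$, and $\langle\cdot,\n^\eps\rangle_\eps=-h^\eps(\s^\eps,\s^\eps)$ agrees with \Cref{lemmasecondaformafondriem}.

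\textbf{Main obstacle.} The hard step is Step 3: the correct identification of $\nabla_T\vh$, with the right signs and the $\alpha^2$ factors, in terms of $\nabla^{\hhh,S}\alpha$. This is what makes the $J(\vh)$ coefficients collapse exactly to $-2\alpha$.
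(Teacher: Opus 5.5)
Your proposal is correct and is essentially the paper's computation. Both arguments expand $\s^\eps$ in $\vh$ and $\eps T$, use \eqref{connessioneerotazione} and \eqref{propvh5}, and rely on the identity $\nabla_T\vh=\nabla^{\hhh,S}\alpha$ (the paper's $\sum_i T(\vh^i)E^i=\E\alpha$, which comes from $\vh=\nabla^\hhh d$, $\alpha=Td$ and $[Z_i,T]=0$). The only difference is how the $\n^\eps$- and $\s^\eps$-components are obtained. The paper reads them off from $h^\eps(\s^\eps,\s^\eps)$ in \Cref{lemmasecondaformafondriem} and from $|\s^\eps|_\eps=1$. You get them by differentiating the coefficients, which works because $\s^\eps=\eps\lambda\,\s$ exactly, so $\s^\eps\alpha=\eps\lambda\,\s\alpha$ with no remainder.

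The hedge in your Step 3 resolves with no extra terms. The pseudohermitian connection satisfies $\nabla Z_i=0$, so your own argument gives $\langle\nabla_T\vh,Z_i\rangle=T(Z_i d)=Z_i(Td)=Z_i\alpha$. Hence $\nabla_T\vh=\nabla^{\hhh,S}\alpha$ exactly on $S$, with no $J$-terms and no additional multiple of $J(\vh)$. This is precisely what makes your $J(\vh)$-coefficients $-2\alpha\lambda^2-2\eps^2\alpha^3\lambda^2$ collapse to $-2\alpha$.
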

    \begin{proof}
       First,
        \begin{equation*}
            \left\langle \nabla^\eps_{\s^\eps}\s^\eps,\n^\eps\right\rangle_\eps=-h^\eps(\s^\eps,\s^\eps)\overset{\eqref{formaepsenonepstuttoorizz}}{=}-\frac{\eps^2\s\alpha}{\left(1+\eps^2\alpha^2\right)^{\frac{3}{2}}}.
        \end{equation*}
        Moreover, $\left\langle \nabla^\eps_{\s^\eps}\s^\eps,\s^\eps\right\rangle_\eps=0$. 
        Finally, fix $\E\in\Gamma(\hhh T S)$. Then
        \begin{equation*}
            \begin{split}
                   \Big\langle &\nabla^\eps_{\s^\eps}\s^\eps,\E\Big\rangle_\eps=\frac{1}{\sqrt{1+\eps^2\alpha^2}}\left\langle \nabla^\eps_{-\eps\alpha\vh+\eps T}\left(\frac{-\eps\alpha\vh+\eps T}{\sqrt{1+\eps^2\alpha^2}}\right),\E\right\rangle_\eps\\
                   &=\frac{1}{1+\eps^2\alpha^2}\left\langle \nabla^\eps_{-\eps\alpha\vh+\eps T}\left(-\eps\alpha\vh\right),\E\right\rangle_\eps+\frac{1}{1+\eps^2\alpha^2}\left\langle \nabla^\eps_{-\eps\alpha\vh+\eps T}\eps T,\E\right\rangle_\eps\\
                   \overset{\eqref{connessioneerotazione}}&{=}\frac{\eps^2\alpha^2}{1+\eps^2\alpha^2}\left\langle \nabla^\eps_{\vh}\vh,\E\right\rangle_\eps-\frac{\eps\alpha}{1+\eps^2\alpha^2}\left\langle \nabla^\eps_{\eps T}\vh,\E\right\rangle_\eps-\frac{\alpha}{1+\eps^2\alpha^2}\left\langle \E,J(\vh)\right\rangle\\
                \overset{\eqref{levi_civita},\eqref{propvh5}}&{=}-\frac{2\eps^2\alpha^3}{1+\eps^2\alpha^2}\left\langle \E,J(\vh)\right\rangle-\frac{\eps^2\alpha}{1+\eps^2\alpha^2}\sum_{i=1}^{2n}T(\vh^i)E^i-\frac{\alpha}{1+\eps^2\alpha^2}\sum_{i=1}^n\left(\vh^i Y_i-\vh^{n+i}X_i\right)-\frac{\alpha}{1+\eps^2\alpha^2}\left\langle \E,J(\vh)\right\rangle\\
                \overset{\eqref{normcondist}}&{=}-2\alpha\left\langle \E,J(\vh)\right\rangle-\frac{\eps^2\alpha\E\alpha}{1+\eps^2\alpha^2}.
            \end{split}
        \end{equation*}
        The thesis follows by the above computations.
    \end{proof}
    \begin{proposition}
        Let $ \A\in\Gamma(\hhh T S)$. Let $f\in C^\infty(S\setminus S_0)$. Let $p\in S\setminus S_0$. Then
        \begin{equation}\label{diverepsilonespressa}
            \divv^{\eps,S}\left(\A+f\s\right)=\divv^{\hhh,S}\A+2\alpha \left\langle \A,J(\vh)\right\rangle+\s f-f H^\hhh\alpha+\frac{\eps^2\alpha}{1+\eps^2\alpha^2}\left(\A\alpha+f\s\alpha\right).
        \end{equation}
        In particular, if $\varphi\in\C^\infty(S\setminus S_0)$ and $p\in S\setminus S_0$,
        \begin{equation}\label{laplaepsilonespresso}
            \Delta^{\eps,S}\varphi=\hat \Delta^{\hhh,S}\varphi+\frac{\eps^2\s\s\varphi}{1+\eps^2\alpha^2}-\frac{\eps^4\alpha\s\alpha\s\varphi}{\left(1+\eps^2\alpha^2\right)^2}-\frac{\eps^2H^\hhh\alpha\s\varphi}{1+\eps^2\alpha^2}+\frac{\eps^2\alpha}{1+\eps^2\alpha^2}\nabla^{\hhh,S}\alpha.
        \end{equation}
        Therefore, if $(\varphi^\eps)_\eps$ converges to $\varphi$ smoothly as in \eqref{convhepstoh}, then
        \begin{equation}\label{jacepstojach}
        \jacobi^\eps\varphi^\eps\xrightarrow[\eps\to 0]{}\jacobi^\hhh\varphi,\qquad\text{ locally uniformly on $S\setminus S_0$.}
    \end{equation}
    \end{proposition}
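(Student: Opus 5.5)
I would compute the Riemannian divergence $\divv^{\eps,S}$ in the local $\langle\cdot,\cdot\rangle_\eps$-orthonormal frame of $TS$ from the proof of \Cref{lemmasecofondformcondueparametri}: $\E_1,\ldots,\E_{2n-1}$ spanning $\hhh TS$ (with $\E_n=J(\vh)$) and $\E_{2n}=\s^\eps$. The first step is to rewrite $\s$ in terms of $\s^\eps$. From \eqref{formadiesseepsilon} we get $\s=T-\alpha\vh=\frac{\sqrt{1+\eps^2\alpha^2}}{\eps}\s^\eps$. Writing $g\coloneqq f\sqrt{1+\eps^2\alpha^2}/\eps$, we then have
\[
\divv^{\eps,S}(\A+f\s)=\sum_{i=1}^{2n-1}\left\langle\nabla^\eps_{\E_i}(\A+g\s^\eps),\E_i\right\rangle_\eps+\left\langle\nabla^\eps_{\s^\eps}(\A+g\s^\eps),\s^\eps\right\rangle_\eps .
\]

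\textbf{The $\A$-part.} For $i\le 2n-1$, \eqref{relazionetraleconnessioni} gives $\langle\nabla^\eps_{\E_i}\A,\E_i\rangle_\eps=\langle\nabla_{\E_i}\A,\E_i\rangle$, and these sum to $\divv^{\hhh,S}\A$. For the remaining term, since $\langle\A,\s^\eps\rangle_\eps=0$ we have $\langle\nabla^\eps_{\s^\eps}\A,\s^\eps\rangle_\eps=-\langle\A,\nabla^\eps_{\s^\eps}\s^\eps\rangle_\eps$. By \eqref{nabepssepsseps} this equals $2\alpha\langle\A,J(\vh)\rangle+\frac{\eps^2\alpha}{1+\eps^2\alpha^2}\A\alpha$, which produces the $\A$-terms in \eqref{diverepsilonespressa}.

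\textbf{The $f\s$-part.} The last frame term gives $\s^\eps g=\frac{1}{\sqrt{1+\eps^2\alpha^2}}\,\s\!\left(f\sqrt{1+\eps^2\alpha^2}\right)/\eps\cdot\eps=\s f+\frac{\eps^2\alpha f\,\s\alpha}{1+\eps^2\alpha^2}$. The horizontal terms give
\[
g\sum_{i\le 2n-1}\langle\nabla^\eps_{\E_i}\s^\eps,\E_i\rangle_\eps=-g\sum_{i\le 2n-1}\langle\s^\eps,\nabla^\eps_{\E_i}\E_i\rangle_\eps .
\]
I would evaluate this with \eqref{levicivitavspseudohermitian}. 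The term $\langle\E_i,J(\E_i)\rangle T$ vanishes, so $\nabla^\eps_{\E_i}\E_i=\nabla_{\E_i}\E_i$. Pairing with $\s^\eps\propto-\eps\alpha\vh+\eps T$, and using that the pseudohermitian connection preserves $\hhh$ (so the $T$-component of $\nabla_{\E_i}\E_i$ is zero), only $-\eps\alpha\langle\nabla_{\E_i}\E_i,\vh\rangle/\sqrt{1+\eps^2\alpha^2}=\eps\alpha\,h^\hhh(\E_i,\E_i)/\sqrt{1+\eps^2\alpha^2}$ survives. Summing over $i$ and multiplying by $-g$ gives $-f\alpha H^\hhh$. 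This matches \eqref{diverepsilonespressa}. The check that the $T$-component vanishes and the sign bookkeeping here are the only delicate points.

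\textbf{The Laplacian.} For \eqref{laplaepsilonespresso}, decompose the gradient as
\[
\nabla^{\eps,S}\varphi=\nabla^{\hhh,S}\varphi+(\s^\eps\varphi)\s^\eps=\nabla^{\hhh,S}\varphi+\frac{\eps^2}{1+\eps^2\alpha^2}(\s\varphi)\,\s,
\]
and apply \eqref{diverepsilonespressa} with $\A=\nabla^{\hhh,S}\varphi$ and $f=\frac{\eps^2\s\varphi}{1+\eps^2\alpha^2}$. The $\A$-part gives $\Delta^{\hhh,S}\varphi+2\alpha J(\vh)\varphi=\hat\Delta^{\hhh,S}\varphi$, plus the term $\frac{\eps^2\alpha}{1+\eps^2\alpha^2}\nabla^{\hhh,S}\alpha\,\varphi$. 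Differentiating $f$ along $\s$ yields the $\s\s\varphi$ and $\s\alpha\,\s\varphi$ terms. The remaining $f$-terms are $O(\eps^2)$ and combine into the stated expressions; collecting the $\eps^4$ contributions should be routine.

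\textbf{The limit.} For \eqref{jacepstojach}, write
\[
\jacobi^\eps\varphi^\eps=-\Delta^{\eps,S}\varphi^\eps-\varphi^\eps\left(|h^\eps|^2+\ric^\eps(\n^\eps,\n^\eps)\right).
\]
Every term of \eqref{laplaepsilonespresso} beyond $\hat\Delta^{\hhh,S}$ carries a factor $\eps^2$ multiplying derivatives of $\varphi^\eps$ and $\alpha$, which are locally bounded by the assumed smooth convergence. The horizontal part converges because $\hat\Delta^{\hhh,S}\varphi^\eps\to\hat\Delta^{\hhh,S}\varphi$. The zeroth-order potential converges by \eqref{convergenzadelpotenzialejacobi}. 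Together these give locally uniform convergence on $S\setminus S_0$.
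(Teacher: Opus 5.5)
Your proposal is correct and follows the paper's proof essentially step by step. It uses the same frame $\E_1,\ldots,\E_{2n-1},\s^\eps$, treats the $\A$-part via \eqref{relazionetraleconnessioni} and \eqref{nabepssepsseps}, obtains the Laplacian from the splitting $\nabla^{\eps,S}\varphi=\nabla^{\hhh,S}\varphi+\frac{\eps^2\s\varphi}{1+\eps^2\alpha^2}\s$, and gets the limit from \eqref{convergenzadelpotenzialejacobi}. The only cosmetic difference is in the horizontal terms of $\divv^{\eps,S}(f\s)$: you move the derivative onto $\E_i$ and use \eqref{levicivitavspseudohermitian}, while the paper expands $\s=T-\alpha\vh$ and uses \eqref{relazionetraleconnessioni} and \eqref{connessioneerotazione}. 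Both give $-fH^\hhh\alpha$. Your reading of the last term of \eqref{laplaepsilonespresso} as $\frac{\eps^2\alpha}{1+\eps^2\alpha^2}\langle\nabla^{\hhh,S}\alpha,\nabla^{\hhh,S}\varphi\rangle$ is the correct one; the statement prints this term as a vector, which is a typo.
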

    \begin{proof}
        Let $\E_1,\ldots,\E_{2n-1}$ be as in the proof of \Cref{lemmasecofondformcondueparametri}. First,
        \begin{equation*}
            \begin{split}
                \divv^{\eps,S}\A
                \overset{\eqref{relazionetraleconnessioni}}{=}\sum_{i=1}^{2n-1}\left\langle\nabla_{\E_i}\A,\E_i\right\rangle-\left\langle\nabla^\eps_{\s^\eps}\s^\eps,\A\right\rangle_\eps
                \overset{\eqref{nabepssepsseps}}{=}\divv^{\hhh,S}\A+2\alpha \left\langle \A,J(\vh)\right\rangle+\frac{\eps^2\alpha}{1+\eps^2\alpha^2}\A\alpha.
            \end{split}
        \end{equation*}
        Moreover,
        \begin{equation*}
            \begin{split}
                \divv^{\eps,S}\left(f\s\right)
                &=\s f+f\sum_{i=1}^{2n-1}\left\langle\nabla^\eps_{\E_i}\s,\E_i\right\rangle_\eps+f\left\langle\nabla^\eps_{\s^\eps}\s,\s^\eps\right\rangle_\eps\\
                &=\s f-f\alpha\sum_{i=1}^{2n-1}\left\langle\nabla^\eps_{\E_i}\vh,\E_i\right\rangle_\eps+f \sum_{i=1}^{2n-1}\left\langle\nabla^\eps_{\E_i} T,\E_i\right\rangle_\eps+f\left\langle\nabla^\eps_{\s^\eps}\left(\frac{\sqrt{1+\eps^2\alpha^2}}{\eps}\s^\eps\right),\s^\eps\right\rangle_\eps\\
                \overset{\eqref{connessioneerotazione},\eqref{relazionetraleconnessioni}}&{=}\s f-f\alpha\sum_{i=1}^{2n-1}\left\langle\nabla_{\E_i}\vh,\E_i\right\rangle+\frac{f}{\eps^2} \sum_{i=1}^{2n-1}\left\langle J(\E_i),\E_i\right\rangle+\frac{f}{\sqrt{1+\eps^2\alpha^2}}\s\left(\sqrt{1+\eps^2\alpha^2}\right)\\
                &=\s f-f H^\hhh\alpha+\frac{\eps^2f\alpha\s\alpha}{1+\eps^2\alpha^2}.
            \end{split}
        \end{equation*}
        To prove \eqref{laplaepsilonespresso} it suffices to apply \eqref{diverepsilonespressa}, noticing that 
        \begin{equation*}
            \nabla^{\eps,S}\varphi=\nabla^{\hhh,S}\varphi+\frac{\eps^2\s\varphi}{1+\eps^2\alpha^2}\s.
        \end{equation*}
        Finally, \eqref{jacepstojach} follows by \eqref{convergenzadelpotenzialejacobi} and \eqref{laplaepsilonespresso}.
    \end{proof}
   The above results and the divergence theorem yield the following integration-by-parts formula (cf. \cite{MR2354992}).
    \begin{proposition}\label{proposizioneibpsubriemsurf}
        Let $\varphi\in C^2(S)\cap C^\infty(S\setminus S_0)$.  Let $\A\in\Gamma(\hhh TS)$. If $\supp \left(\varphi\A\right)\subseteq S\setminus S_0,$ then 
        \begin{align}\label{ibpsubriemequation}
            \int_S\varphi\divv^{\hhh,S}\A\,d\sigma^\hhh+2\int_S\varphi\alpha\left\langle \A,J(\vh)\right\rangle\,d\sigma^\hhh=-\int_S\A\varphi\,d\sigma^\hhh.
        \end{align}
        In addition, if $\psi\in C^2(S)\cap C^\infty(S\setminus S_0)$ and $\supp(\varphi\psi)\subseteq S \setminus S_0$, then 
        \begin{equation}\label{ibpverticalformula}
            \int_S\varphi\,\s\psi\,d\sigma^\hhh=\int_S\varphi\,\psi H^\hhh\alpha\,d\sigma^\hhh-\int_S\s\varphi\,\psi\,d\sigma^\hhh.
        \end{equation}
    \end{proposition}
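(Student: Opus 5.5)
Both identities will be obtained by applying the Riemannian divergence theorem on the closed hypersurface $S$, for the metric $\langle\cdot,\cdot\rangle_\eps$ with a fixed $\eps$ (take $\eps=1$ for simplicity). Then \eqref{diverepsilonespressa} is used to turn the Riemannian divergence into the horizontal quantities.

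\emph{Proof of \eqref{ibpsubriemequation}.} Consider the tangent vector field $\W\coloneqq\varphi\A/\sqrt{1+\alpha^2}$. It is $C^1$, and its support is contained in $\supp(\varphi\A)\subseteq S\setminus S_0$, a compact set. Hence $\W$ extends by zero to all of $S$. Since $S$ is closed, $\int_S\divv^{1,S}\W\,d\sigma^1=0$.

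By \eqref{diverepsilonespressa} with $\eps=1$ and $f=0$,
\begin{equation*}
\divv^{1,S}\W=\frac{\varphi}{\sqrt{1+\alpha^2}}\Big(\divv^{\hhh,S}\A+2\alpha\langle\A,J(\vh)\rangle+\frac{\alpha\,\A\alpha}{1+\alpha^2}\Big)+\A\Big(\frac{\varphi}{\sqrt{1+\alpha^2}}\Big).
\end{equation*}
Expanding the last term gives $\A\varphi/\sqrt{1+\alpha^2}-\varphi\alpha\,\A\alpha/(1+\alpha^2)^{3/2}$. This cancels exactly the $\alpha\,\A\alpha$ correction in the bracket, and that cancellation is the one point to check carefully. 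Finally, \eqref{areaelementeps} at $\eps=1$ gives $\sigma^1=\sigma$, hence $d\sigma^\hhh=d\sigma^1/\sqrt{1+\alpha^2}$. Integrating the identity against $\sigma^1$ therefore yields \eqref{ibpsubriemequation}.

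\emph{Proof of \eqref{ibpverticalformula}.} Apply the same argument to $\W\coloneqq\varphi\psi\,\s/\sqrt{1+\alpha^2}$, which is supported in $\supp(\varphi\psi)\subseteq S\setminus S_0$. By \eqref{diverepsilonespressa} with $\A=0$ and $f=\varphi\psi/\sqrt{1+\alpha^2}$,
\begin{equation*}
\divv^{1,S}\W=\s f-fH^\hhh\alpha+\frac{\alpha\,\s\alpha}{1+\alpha^2}f.
\end{equation*}
Computing $\s f=\s(\varphi\psi)/\sqrt{1+\alpha^2}-\varphi\psi\,\alpha\,\s\alpha/(1+\alpha^2)^{3/2}$, the $\s\alpha$ terms again cancel, leaving
\begin{equation*}
\divv^{1,S}\W=\frac{\s(\varphi\psi)-\varphi\psi H^\hhh\alpha}{\sqrt{1+\alpha^2}}.
\end{equation*}
Integrating over $S$ and using $\s(\varphi\psi)=\varphi\,\s\psi+\psi\,\s\varphi$ gives \eqref{ibpverticalformula}.

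The only delicate point is regularity. $\varphi$ is merely $C^2$ on $S$, but the support hypothesis confines everything to a compact subset of $S\setminus S_0$, where all quantities involved (in particular $\alpha$) are smooth. So the divergence theorem applies to a $C^1$ compactly supported field, and no behaviour near characteristic points enters.
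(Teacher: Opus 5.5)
Your proof is correct and is the argument the paper alludes to when it says the formula follows from the divergence theorem and the preceding results, namely the Riemannian divergence theorem combined with \eqref{diverepsilonespressa}; building the weight $1/\sqrt{1+\alpha^2}$ into the field at $\eps=1$ makes the $\alpha\,\A\alpha$ and $\alpha\,\s\alpha$ corrections cancel exactly, which is equivalent to applying the divergence theorem to $\varphi\A$ and $\varphi\psi\s$ with $\eps\,d\sigma^\eps$ and letting $\eps\to 0$. One small citation slip: \eqref{areaelementeps} is vacuous at $\eps=1$, and $\sigma^1=\sigma$ holds simply because $\langle\cdot,\cdot\rangle_1=\langle\cdot,\cdot\rangle$ by definition, after which $d\sigma^\hhh=d\sigma^1/\sqrt{1+\alpha^2}$ is \eqref{areaelementorizz}.
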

Finally, we show the convergence of the cubic curvature term appearing in \eqref{variazsecondanormalemaarbitrariaperilresto}.
\begin{lemma}\label{lemmatracehepsterza}
    Let $p\in S\setminus S_0$. Then
    \begin{equation*}
        \begin{split}
            \trace&\left(\left(h^\eps\right)^3\right)=\frac{3\tilde h^\hhh(J(\vh),J(\vh))}{\eps^2\left(1+\eps^2\alpha^2\right)^\frac{1}{2}}+\frac{1}{\left(1+\eps^2\alpha^2\right)^\frac{3}{2}}\left(\trace\left(\left(\tilde h^\hhh\right)^3\right)+6\tilde h^\hhh\left(\nabla^{\hhh,S}\alpha,J(\vh)\right)+3\s\alpha\right)\\
            &\quad+\frac{3\eps^2}{\left(1+\eps^2\alpha^2\right)^\frac{5}{2}}\left(\tilde h^\hhh\left(\nabla^{\hhh,S}\alpha,\nabla^{\hhh,S}\alpha\right)+2J(\vh)\alpha\s\alpha\right)+\frac{3\eps^4}{\left(1+\eps^2\alpha^2\right)^\frac{7}{2}}\s\alpha\left|\nabla^{\hhh,S}\alpha\right|^2+\frac{\eps^6}{\left(1+\eps^2\alpha^2\right)^\frac{9}{2}}\left(\s\alpha\right)^3
        \end{split}
    \end{equation*}
\end{lemma}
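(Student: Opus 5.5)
We compute $\trace((h^\eps)^3)$ in the same adapted $\langle\cdot,\cdot\rangle_\eps$-orthonormal frame used in the proof of \Cref{lemmasecofondformcondueparametri}. Take $\E_1,\ldots,\E_{n-1},\E_{n+1},\ldots,\E_{2n-1}$ orthonormal in $\hhh'TS$, together with $\E_n=J(\vh)$ and $\E_{2n}=\s^\eps$, and read off every entry of $h^\eps$ from \eqref{formaepsenonepstuttoorizz}. In this frame the matrix of $h^\eps$ splits into three pieces:
\begin{equation*}
h^\eps=\begin{pmatrix} a\,\tilde h^\hhh & w\\ w^t & c\end{pmatrix},\qquad a=\frac{1}{\sqrt{1+\eps^2\alpha^2}},\quad c=\frac{\eps^2\s\alpha}{(1+\eps^2\alpha^2)^{3/2}}.
\end{equation*}
Here $\tilde h^\hhh$ is the $(2n-1)\times(2n-1)$ horizontal block. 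The vector $w\in\hhh TS$ has components $w_i=h^\eps(\E_i,\s^\eps)$, so that
\begin{equation*}
w=\frac1\eps J(\vh)+\frac{\eps}{1+\eps^2\alpha^2}\nabla^{\hhh,S}\alpha .
\end{equation*}
This holds because the $J(\vh)$ entry is $\frac1\eps+\frac{\eps J(\vh)\alpha}{1+\eps^2\alpha^2}$ and the $\hhh'TS$ entries are $\frac{\eps\E_i\alpha}{1+\eps^2\alpha^2}$.

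For a symmetric block matrix of this form, the trace of the cube is
\begin{equation*}
\trace\big((h^\eps)^3\big)=a^3\trace\big((\tilde h^\hhh)^3\big)+3a\,\tilde h^\hhh(w,w)+3c\,|w|^2+c^3 .
\end{equation*}
This follows from expanding $\sum_{ijk}h_{ij}h_{jk}h_{ki}$ according to how many indices equal $2n$. No indices equal to $2n$ gives the first term. Exactly one index equal to $2n$ gives $3\,w^t(a\tilde h^\hhh)w$. Exactly two gives $3c|w|^2$. All three gives $c^3$. Using the symmetry of $\tilde h^\hhh$ reduces the step to this identity.

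Next we substitute $w$ and expand each term.

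The quadratic-form term is
\begin{equation*}
\tilde h^\hhh(w,w)=\frac{1}{\eps^2}\tilde h^\hhh(J(\vh),J(\vh))+\frac{2}{1+\eps^2\alpha^2}\tilde h^\hhh\big(\nabla^{\hhh,S}\alpha,J(\vh)\big)+\frac{\eps^2}{(1+\eps^2\alpha^2)^2}\tilde h^\hhh\big(\nabla^{\hhh,S}\alpha,\nabla^{\hhh,S}\alpha\big).
\end{equation*}
Multiplying by $3a$ produces the $\eps^{-2}$ term of the statement, the $6\tilde h^\hhh(\nabla^{\hhh,S}\alpha,J(\vh))$ term with weight $(1+\eps^2\alpha^2)^{-3/2}$, and the $3\eps^2\tilde h^\hhh(\nabla^{\hhh,S}\alpha,\nabla^{\hhh,S}\alpha)$ term with weight $(1+\eps^2\alpha^2)^{-5/2}$.

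The norm term is
\begin{equation*}
|w|^2=\frac1{\eps^2}+\frac{2J(\vh)\alpha}{1+\eps^2\alpha^2}+\frac{\eps^2|\nabla^{\hhh,S}\alpha|^2}{(1+\eps^2\alpha^2)^2},
\end{equation*}
where we used $\langle J(\vh),\nabla^{\hhh,S}\alpha\rangle=J(\vh)\alpha$. Multiplying by $3c$ gives three contributions: $3\s\alpha(1+\eps^2\alpha^2)^{-3/2}$, then $6\eps^2J(\vh)\alpha\,\s\alpha\,(1+\eps^2\alpha^2)^{-5/2}$, then $3\eps^4\s\alpha|\nabla^{\hhh,S}\alpha|^2(1+\eps^2\alpha^2)^{-7/2}$.

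Finally, $c^3$ gives the $\eps^6(\s\alpha)^3$ term, and $a^3\trace((\tilde h^\hhh)^3)$ fills in the remaining piece of the $(1+\eps^2\alpha^2)^{-3/2}$ group.

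The only real obstacle is bookkeeping. One must check that the powers of $(1+\eps^2\alpha^2)$ combine exactly as stated; for instance $a\cdot(1+\eps^2\alpha^2)^{-1}=(1+\eps^2\alpha^2)^{-3/2}$ and $c\cdot\eps^{-2}$ gives $\s\alpha(1+\eps^2\alpha^2)^{-3/2}$. One must also confirm that the cross term $6\eps^2J(\vh)\alpha\,\s\alpha$ is exactly the factor $2J(\vh)\alpha\s\alpha$ inside the $3\eps^2$ bracket. With these checks, all terms match the claimed expansion.
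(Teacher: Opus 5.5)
Your proposal is correct and follows the paper's proof. You use the same adapted frame and the same splitting of $\sum h^\eps_{ij}h^\eps_{jk}h^\eps_{ki}$ by how many indices equal $2n$: the paper's terms $\mathrm{I}$--$\mathrm{IV}$ are exactly your $a^3\trace((\tilde h^\hhh)^3)$, $3a\,\tilde h^\hhh(w,w)$, $3c|w|^2$ and $c^3$. Packaging the $\s^\eps$-column as the single vector $w=\frac1\eps J(\vh)+\frac{\eps}{1+\eps^2\alpha^2}\nabla^{\hhh,S}\alpha$ only streamlines the componentwise recombination (separating $J(\vh)$ from $\hhh'TS$) that the paper carries out by hand.
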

\begin{proof}
     Let $\E_1,\ldots,\E_{2n-1}$ be as in the proof of \Cref{lemmasecofondformcondueparametri}. Set $h^\eps_{ij}=h^\eps(\E_i,\E_j)$ for $i,j=1,\ldots,2n$. Then
      \begin{equation*}
          \begin{split}
              \trace\left(\left(h^\eps\right)^3\right)
              =\underbrace{\sum_{i,j,k=1 }^{2n-1}h^\eps_{ij}h^\eps_{jk}h^\eps_{ki}}_\mathrm{I}+\underbrace{3\sum_{i,j=1 }^{2n-1}h^\eps_{ij}h^\eps_{i,2n}h^\eps_{j,2n}}_\mathrm{II}+\underbrace{3h^\eps_{2n,2n}\sum_{i=1}^{2n-1}\left(h^\eps_{i,2n}\right)^2}_\mathrm{III}+\underbrace{\left(h^\eps_{2n,2n}\right)^3}_\mathrm{IV}.
          \end{split}
      \end{equation*}
      First,
      \begin{equation*}
          \mathrm{I}\overset{\eqref{formaepsenonepstuttoorizz}}{=}\frac{1}{\left(1+\eps^2\alpha^2\right)^\frac{3}{2}}\trace\left(\left(\tilde h^\hhh\right)^3\right).
      \end{equation*}
      Moreover,
      \begin{equation*}
          \begin{split}
              \mathrm{II}
              &=3\sum_{\substack{i,j=1 \\ i,j\neq n} }^{2n-1}h^\eps_{ij}h^\eps_{i,2n}h^\eps_{j,2n}+6h^\eps_{n,2n}\sum_{\substack{i=1  \\ i\neq n}}^{2n-1}h^\eps_{i,n}h^\eps_{i,2n}+3h^\eps_{n,n}\left(h^\eps_{n,2n}\right)^2\\
              \overset{\eqref{formaepsenonepstuttoorizz}}&{=}\frac{3\eps^2}{\left(1+\eps^2\alpha^2\right)^\frac{5}{2}}\sum_{\substack{i,j=1 \\ i,j\neq n} }^{2n-1}\tilde h^\hhh(\E_i,\E_j)\E_i\alpha\E_j\alpha+6\left(1+\frac{\eps^2J(\vh)\alpha}{1+\eps^2\alpha^2}\right)\frac{1}{\left(1+\eps^2\alpha^2\right)^\frac{3}{2}}\sum_{\substack{i=1  \\ i\neq n}}^{2n-1}\tilde h^\hhh (\E_i,J(\vh))\E_i\alpha\\
              &\quad+\frac{3}{\eps^2\left(1+\eps^2\alpha^2\right)^\frac{1}{2}}\tilde h^\hhh(J(\vh),J(\vh))\left(1+\frac{\eps^2J(\vh)\alpha}{1+\eps^2\alpha^2}\right)^2\\
              &=\frac{3\eps^2}{\left(1+\eps^2\alpha^2\right)^\frac{5}{2}}\tilde h^\hhh\left(\nabla^{\hhh,S}\alpha,\nabla^{\hhh,S}\alpha\right)+\frac{6}{\left(1+\eps^2\alpha^2\right)^\frac{3}{2}}\tilde h^\hhh\left(\nabla^{\hhh,S}\alpha,J(\vh)\right)+\frac{3}{\eps^2\left(1+\eps^2\alpha^2\right)^\frac{1}{2}}\tilde h^\hhh(J(\vh),J(\vh)).
          \end{split}
      \end{equation*}
      In addition,
      \begin{equation*}
          \begin{split}
              \mathrm{III}&=3h^\eps_{2n,2n}\sum_{\substack{i=1 \\ i\neq n}}^{2n-1}\left(h^\eps_{i,2n}\right)^2+3h^\eps_{2n,2n}\left(h^\eps_{n,2n}\right)^2\\
              \overset{\eqref{formaepsenonepstuttoorizz}}&{=}\frac{3\eps^4}{\left(1+\eps^2\alpha^2\right)^\frac{7}{2}}\s\alpha\sum_{\substack{i=1 \\ i\neq n}}^{2n-1}\left(\E_i\alpha\right)^2+\frac{3}{\left(1+\eps^2\alpha^2\right)^\frac{3}{2}}\s\alpha\left(1+\frac{\eps^2J(\vh)\alpha}{1+\eps^2\alpha^2}\right)^2\\
              &=\frac{3\eps^4}{\left(1+\eps^2\alpha^2\right)^\frac{7}{2}}\s\alpha\left|\nabla^{\hhh,S}\alpha\right|^2+\frac{6\eps^2}{\left(1+\eps^2\alpha^2\right)^\frac{5}{2}}\s\alpha J(\vh)\alpha+\frac{3}{\left(1+\eps^2\alpha^2\right)^\frac{3}{2}}\s\alpha.
          \end{split}
      \end{equation*}
      Finally,
      \begin{equation*}
          \mathrm{IV}\overset{\eqref{formaepsenonepstuttoorizz}}{=}\frac{\eps^6}{\left(1+\eps^2\alpha^2\right)^\frac{9}{2}}\left(\s\alpha\right)^3.
      \end{equation*}
      The thesis follows combining the above computations.
\end{proof}
\begin{lemma}\label{lemmadiscalprodhconcnn}
    Let $p\in S\setminus S_0$. Then
    \begin{equation*}
        \left\langle h^\eps,\j^\eps(\n^\eps,\n^\eps)\right\rangle_\eps=\frac{3\tilde h^\hhh(J(\vh),J(\vh))}{\eps^2\left(1+\eps^2\alpha^2\right)^\frac{3}{2}}-\frac{1}{\left(1+\eps^2\alpha^2\right)^\frac{3}{2}}\Big(\s\alpha+H^\hhh\alpha^2\Big).
    \end{equation*}
\end{lemma}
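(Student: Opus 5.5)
The plan is to expand the contraction in a $\langle\cdot,\cdot\rangle_\eps$-orthonormal frame of $TS$ and to evaluate the curvature entries with \eqref{riemannepsilonquattrozero}. Take the frame $\E_1,\ldots,\E_{2n}$ from the proof of \Cref{lemmasecofondformcondueparametri}, with $\E_n=J(\vh)$ and $\E_{2n}=\s^\eps$. Then
\[
\left\langle h^\eps,\j^\eps(\n^\eps,\n^\eps)\right\rangle_\eps=\sum_{i,j=1}^{2n}h^\eps_{ij}\,\R^\eps(\n^\eps,\E_i,\n^\eps,\E_j).
\]
So the first step is to compute $\R^\eps(\n^\eps,\B,\n^\eps,\D)$ for arbitrary $\B,\D\in\Gamma(TS)$, by substituting $\A=\C=\n^\eps$ into \eqref{riemannepsilonquattrozero}.

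From \eqref{normaleriemannianaepsilon} we have $(\n^\eps)^{2n+1,\eps}=a\coloneqq\eps\alpha/\sqrt{1+\eps^2\alpha^2}$ and $J(\n^\eps)=J(\vh)/\sqrt{1+\eps^2\alpha^2}$. Moreover $\left\langle J(\n^\eps),\n^\eps\right\rangle_\eps=0$, $\left\langle \B,\n^\eps\right\rangle_\eps=\left\langle \D,\n^\eps\right\rangle_\eps=0$, and $\left\langle J(\B),\n^\eps\right\rangle_\eps=-\left\langle \B,J(\n^\eps)\right\rangle_\eps$. With these, the seven terms of \eqref{riemannepsilonquattrozero} should collapse to
\[
\R^\eps(\n^\eps,\B,\n^\eps,\D)=\frac{3}{\eps^2}\left\langle J(\n^\eps),\B\right\rangle_\eps\left\langle J(\n^\eps),\D\right\rangle_\eps-\frac{B^{2n+1,\eps}D^{2n+1,\eps}}{\eps^2}-\frac{a^2}{\eps^2}\left\langle\B,\D\right\rangle_\eps .
\]

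The second step is to contract this with $h^\eps$. Only $\E_n=J(\vh)$ pairs nontrivially with $J(\n^\eps)$, and only $\E_{2n}=\s^\eps$ has a nonzero vertical component, equal to $1/\sqrt{1+\eps^2\alpha^2}$ by \eqref{formadiesseepsilon}. The contraction therefore reduces to
\[
\frac{3\,h^\eps(J(\vh),J(\vh))}{\eps^2(1+\eps^2\alpha^2)}-\frac{h^\eps(\s^\eps,\s^\eps)}{\eps^2(1+\eps^2\alpha^2)}-\frac{\alpha^2}{1+\eps^2\alpha^2}H^\eps .
\]

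The final step is to substitute \Cref{lemmasecondaformafondriem} for $h^\eps(J(\vh),J(\vh))$ and $h^\eps(\s^\eps,\s^\eps)$, and \eqref{curvepsecurvh} for $H^\eps$. The first term then gives the singular term $3\tilde h^\hhh(J(\vh),J(\vh))/(\eps^2(1+\eps^2\alpha^2)^{3/2})$. The third term gives $-H^\hhh\alpha^2/(1+\eps^2\alpha^2)^{3/2}$. It also contributes an $\s\alpha$ piece, which combines with the second term as
\[
-\frac{\s\alpha}{(1+\eps^2\alpha^2)^{5/2}}\left(1+\eps^2\alpha^2\right)=-\frac{\s\alpha}{(1+\eps^2\alpha^2)^{3/2}} .
\]
This yields the claimed formula.

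The main obstacle is the first step, namely the sign and index bookkeeping in \eqref{riemannepsilonquattrozero}. The two $J$-terms must combine into the coefficient $3$, with the correct sign coming from the antisymmetry of $J$. In addition, the three vertical-component terms must reduce to the two displayed ones, using $\left\langle\n^\eps,\n^\eps\right\rangle_\eps=1$ and the tangency of $\B$ and $\D$.
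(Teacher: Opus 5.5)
Your proposal is correct and follows the paper's proof essentially step for step. The paper likewise specializes \eqref{riemannepsilonquattrozero} to the same three-term expression for $\j^\eps(\n^\eps,\n^\eps)(\B,\D)$ and contracts it in the frame with $\E_n=J(\vh)$ and $\E_{2n}=\s^\eps$. It then substitutes \Cref{lemmasecondaformafondriem} and \eqref{curvepsecurvh}, merging the two $\s\alpha$ contributions exactly as you do.
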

\begin{proof}
        Let $\E_1,\ldots,\E_{2n-1}$ be as in the proof of \Cref{lemmasecofondformcondueparametri}. By \eqref{riemannepsilonquattrozero} and \eqref{normaleriemannianaepsilon}, if $\B,\D\in\Gamma(TS)$,
         \begin{equation}\label{formadicnnepsilon}
        \begin{split}
            \j^\eps(\n^\eps,\n^\eps)(\B,\D)&=\frac{3}{\eps^2}\left\langle \n^\eps,J(\B)\right\rangle _\eps \left\langle \n^\eps,J(\D)\right\rangle_\eps-\frac{B^{2n+1,\eps}D^{2n+1,\eps}}{\eps^2}-\frac{\alpha^2}{1+\eps^2\alpha^2}\langle\B,\D\rangle_\eps.
        \end{split}
    \end{equation}
    Therefore, 
    \begin{equation*}
        \begin{split}
              \left\langle h^\eps,\j^\eps(\n^\eps,\n^\eps)\right\rangle_\eps&=\sum_{i,j=1}^{2n}h^\eps(\E_i,\E_j)\j^\eps(\n^\eps,\n^\eps)(\E_i,\E_j)\\
              \overset{\eqref{formadicnnepsilon}}&{=}\frac{3}{\eps^2}h^\eps (J(\vh),J(\vh))\left\langle \n^\eps,\vh\right\rangle_\eps^2-\frac{1}{\eps^2}h^\eps(\s^\eps,\s^\eps)\left\langle\s^\eps,\eps T\right\rangle_\eps^2-\frac{\alpha^2}{1+\eps^2\alpha^2}H^\eps\\
              \overset{\eqref{normaleriemannianaepsilon},\eqref{formadiesseepsilon},\eqref{formaepsenonepstuttoorizz},\eqref{curvepsecurvh}}&{=}\frac{3\tilde h^\hhh(J(\vh),J(\vh))}{\eps^2\left(1+\eps^2\alpha^2\right)^\frac{3}{2}}-\frac{\s\alpha}{\left(1+\eps^2\alpha^2\right)^\frac{5}{2}}-\frac{H^\hhh\alpha^2}{\left(1+\eps^2\alpha^2\right)^\frac{3}{2}}--\frac{\eps^2\alpha^2\s\alpha}{\left(1+\eps^2\alpha^2\right)^\frac{5}{2}}\\
              &=\frac{3\tilde h^\hhh(J(\vh),J(\vh))}{\eps^2\left(1+\eps^2\alpha^2\right)^\frac{3}{2}}-\frac{\s\alpha}{\left(1+\eps^2\alpha^2\right)^\frac{3}{2}}-\frac{H^\hhh\alpha^2}{\left(1+\eps^2\alpha^2\right)^\frac{3}{2}}.
        \end{split}
    \end{equation*}
\end{proof}
    \begin{theorem}
        Let $\eps>0$. Let $p\in S\setminus S_0$. 
        Then
        \begin{equation*}
            \begin{split}
                2&\trace\left(\left(h^\eps\right)^3\right)-2 \left\langle h^\eps,\j^\eps(\n^\eps,\n^\eps)\right\rangle_\eps-\left(\nabla^\eps_{\n^\eps}\ric^\eps\right)(\n^\eps,\n^\eps)\\
                &=\frac{1}{\left(1+\eps^2\alpha^2\right)^\frac{3}{2}}\left(2\trace\left(\left(\tilde h^\hhh\right)^3\right)+12\tilde h^\hhh\left(\nabla^{\hhh,S}\alpha,J(\vh)\right)+8\s\alpha+6\tilde h^\hhh(J(\vh),J(\vh))\alpha^2+2 H^\hhh\alpha^2\right)\\
                &\quad +\frac{6\eps^2}{\left(1+\eps^2\alpha^2\right)^\frac{5}{2}}\left(\tilde h^\hhh\left(\nabla^{\hhh,S}\alpha,\nabla^{\hhh,S}\alpha\right)+2J(\vh)\alpha\s\alpha\right)+\frac{6\eps^4}{\left(1+\eps^2\alpha^2\right)^\frac{7}{2}}\s\alpha\left|\nabla^{\hhh,S}\alpha\right|^2+\frac{2\eps^6}{\left(1+\eps^2\alpha^2\right)^\frac{9}{2}}\left(\s\alpha\right)^3.
            \end{split}
        \end{equation*}
        In particular,
        \begin{equation}\label{qbqequation}
            \begin{split}
              2&\trace\left(\left(h^\eps\right)^3\right)-2 \left\langle h^\eps,\j^\eps(\n^\eps,\n^\eps)\right\rangle_\eps-\left(\nabla^\eps_{\n^\eps}\ric^\eps\right)(\n^\eps,\n^\eps)\\
                &\xrightarrow[\eps\to 0]{}  2\trace\left(\left(\tilde h^\hhh\right)^3\right)+12\tilde h^\hhh\left(\nabla^{\hhh,S}\alpha,J(\vh)\right)+8\s\alpha+6\tilde h^\hhh(J(\vh),J(\vh))\alpha^2+2 H^\hhh\alpha^2
            \end{split}
        \end{equation}
        locally uniformly on $S\setminus S_0$.
    \end{theorem}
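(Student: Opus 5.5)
The plan is to assemble the identity from the three pieces already computed, so that essentially no new geometry is needed. First, the Ricci term vanishes identically: by \eqref{nabaepsriccizerosudiagonale}, applied with $\A=\n^\eps$, we have $\left(\nabla^\eps_{\n^\eps}\ric^\eps\right)(\n^\eps,\n^\eps)=0$ for every $\eps>0$. Hence the left-hand side reduces to $2\trace\left(\left(h^\eps\right)^3\right)-2\left\langle h^\eps,\j^\eps(\n^\eps,\n^\eps)\right\rangle_\eps$. For these two terms we substitute the explicit expressions from \Cref{lemmatracehepsterza} and \Cref{lemmadiscalprodhconcnn}, respectively.

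The only delicate point is the singular terms of order $\eps^{-2}$, which appear in both lemmas and must cancel up to a bounded remainder. Collecting them gives
\begin{equation*}
\frac{6\tilde h^\hhh(J(\vh),J(\vh))}{\eps^2}\left(\frac{1}{\left(1+\eps^2\alpha^2\right)^{\frac12}}-\frac{1}{\left(1+\eps^2\alpha^2\right)^{\frac32}}\right)=\frac{6\tilde h^\hhh(J(\vh),J(\vh))}{\eps^2}\cdot\frac{\eps^2\alpha^2}{\left(1+\eps^2\alpha^2\right)^{\frac32}}=\frac{6\tilde h^\hhh(J(\vh),J(\vh))\alpha^2}{\left(1+\eps^2\alpha^2\right)^{\frac32}}.
\end{equation*}
This produces exactly the term $6\tilde h^\hhh(J(\vh),J(\vh))\alpha^2$ in the statement. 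This cancellation is the step where I expect any difficulty to lie, but it reduces to the elementary identity displayed above.

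The remaining terms with prefactor $\left(1+\eps^2\alpha^2\right)^{-3/2}$ combine as follows. From $2\trace\left(\left(h^\eps\right)^3\right)$ we get $2\trace\left(\left(\tilde h^\hhh\right)^3\right)+12\tilde h^\hhh\left(\nabla^{\hhh,S}\alpha,J(\vh)\right)+6\s\alpha$. From $-2\left\langle h^\eps,\j^\eps(\n^\eps,\n^\eps)\right\rangle_\eps$ we get $2\s\alpha+2H^\hhh\alpha^2$. Together these give the bracket in the statement, with $8\s\alpha$. The terms of orders $\eps^2$, $\eps^4$ and $\eps^6$ come only from \Cref{lemmatracehepsterza}, and they are simply doubled. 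This proves the identity.

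For the convergence \eqref{qbqequation}, observe that $\alpha$, $\tilde h^\hhh$, $H^\hhh$, $\vh$ and their derivatives along $S$ are smooth, hence locally bounded, on $S\setminus S_0$, and none of them depends on $\eps$. Each prefactor $\left(1+\eps^2\alpha^2\right)^{-k/2}$ converges to $1$ locally uniformly, and every term carrying a positive power of $\eps$ tends to $0$ locally uniformly. Letting $\eps\to0$ in the identity therefore yields \eqref{qbqequation} locally uniformly on $S\setminus S_0$.
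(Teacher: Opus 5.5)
Your proposal is correct and follows the paper's proof exactly. The Ricci term vanishes by \eqref{nabaepsriccizerosudiagonale}. You then substitute \Cref{lemmatracehepsterza} and \Cref{lemmadiscalprodhconcnn}, and the $\eps^{-2}$ terms cancel through the same elementary identity $\frac{1}{(1+\eps^2\alpha^2)^{1/2}}-\frac{1}{(1+\eps^2\alpha^2)^{3/2}}=\frac{\eps^2\alpha^2}{(1+\eps^2\alpha^2)^{3/2}}$, which leaves the bounded term $6\tilde h^\hhh(J(\vh),J(\vh))\alpha^2$. Your bookkeeping is also right: the $8\s\alpha$ coefficient is correct, and the higher-order terms are correctly doubled.
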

    \begin{proof}
    Notice that
    \begin{equation*}
        \begin{split}
            \frac{6\tilde h^\hhh(J(\vh),J(\vh))}{\eps^2\left(1+\eps^2\alpha^2\right)^\frac{1}{2}}-\frac{6\tilde h^\hhh(J(\vh),J(\vh))}{\eps^2\left(1+\eps^2\alpha^2\right)^\frac{3}{2}}=\frac{6\tilde h^\hhh(J(\vh),J(\vh))\alpha^2}{\left(1+\eps^2\alpha^2\right)^\frac{3}{2}}.
        \end{split}
    \end{equation*}
        The thesis follows by \eqref{nabaepsriccizerosudiagonale}, \Cref{lemmatracehepsterza} and \Cref{lemmadiscalprodhconcnn}.
    \end{proof}
    \subsection{Variation formulas}
      We establish the relevant variation formulas for \eqref{tmcfunsubriemderfgt5ryhythyh}. Accordingly, we adapt some notation of \Cref{subsec_variations} to the sub-Riemannian setting. If $\Phi$ is a variation and $\Y$ is as in \eqref{def_xechis}, we set 
      \begin{equation*}
    \X(q)=\Y(0,q),\qquad \Z(q)=\left.\frac{\partial }{\partial \tau}\right|_{\tau=0}\Y(\tau,q)+\left(\nabla_\X\X\right)(q)\qquad\text{for any $q\in \hh^n$,}
\end{equation*}
where in the above definition $\nabla$ is the pseudohermitian connection \eqref{pseudotorsion}.
According to the Riemannian terminology, we call $\X$ and $\Z$ respectively  \emph{variational velocity field} and \emph{variational acceleration field} of $\Phi$. If a hypersurface $S$ is fixed, we say that a variation is \emph{non-characteristic} whenever $S\cap K(\Phi)\Subset S\setminus S_0$. In the next result, we restrict ourselves to consider non-characteristic variations 
of closed hypersurfaces. If instead one computes variations on non-compact hypersurfaces, it suffices to restrict the relevant functionals to $K(\Phi)$. 
\begin{theorem}\label{teoremavariazionigeneralimainsubriem}
        Let $S\subseteq\hh^n$ be an embedded, closed, two-sided hypersurface of class $C^2$. Let $S\setminus S_0$ be smooth. Fix a function $f:\rr\to\rr$ which is smooth in a neighborhood of $\left\{H^\hhh(p)\,:\,p\in S\setminus S_0\right\}$. Let $\Phi$ be a non-characteristic variation.  Denote by $\X$ and $\Z$ its velocity and acceleration respectively. Define $\varphi,\psi\in C^\infty_c(S\setminus S_0)$ by
        \begin{equation*}\label{lefunzioniimportanti}
        \begin{split}        \varphi&\coloneqq\left\langle \X,\vh+\alpha T\right\rangle,\\
            \psi&\coloneqq \left\langle\Z,\vh+\alpha T\right\rangle-2\left(\X-\varphi\vh\right)\varphi-\Big\langle\nabla_{\X-\varphi\vh}\left(\X-\varphi\vh\right),\vh+\alpha T\Big\rangle-2\alpha\varphi\left\langle \X,J(\vh)\right\rangle.
        \end{split}
        \end{equation*}
               Set 
        \begin{equation*}
 \delta\tmc^\hhh_f(S)[\Phi]\coloneqq \left.\frac{d}{d\tau}\right|_{\tau=0}\tmc^\hhh_f\left(\Phi_\tau(S)\right),\qquad   \delta^2\tmc^\hhh_f(S)[\Phi]\coloneqq \left.\frac{d^2}{d\tau^2}\right|_{\tau=0}\tmc^\hhh_f\left(\Phi_\tau(S)\right).
\end{equation*} 
        Then 
            \begin{align}            \delta\tmc^\hhh_f(S)[\varphi]\coloneqq\delta\tmc^\hhh_f(S)[\Phi]&=  \int_S\varphi\Big(\jacobi^\hhh f'(H^\hhh)+f(H^\hhh) H^\hhh\Big)\,d\sigma^\hhh,\label{variazprimasubriemmain}\\
            \delta^2\tmc_f^\hhh(S)[\Phi]&=\delta\tmc^\hhh_f(S)\left[\psi\right]+\int_S\varphi\mathcal L^\hhh\varphi\,d\sigma^\hhh, \label{variazionesecondasubriemannianamain}
    \end{align}
    where $ \mathcal L^\hhh$ is the self-adjoint operator defined on $S\setminus S_0$ by
    \begin{equation*}
        \begin{split}
            \mathcal L^\hhh\varphi&=\jacobi^\hhh\left(f''(H^\hhh)\jacobi^\hhh\varphi\right)\\
            &\quad+2\divv^{\hhh,S} \left\langle f'(H^\hhh)A^\hhh\left(\nabla^{\hhh,S}\varphi\right)\right)+4 f'(H^\hhh)\s J(\vh)\varphi-\left(f'(H^\hhh)H^\hhh+f(H^\hhh)\right)\hat\Delta^{\hhh,S}\varphi\\
            &\quad+4\alpha f'(H^\hhh)\Big(\tilde h^\hhh\left(\nabla^{\hhh,S}\varphi,J(\vh)\right)-H^\hhh J(\vh)\varphi\Big)- 4 f''(H^\hhh)\Big(\tilde h^\hhh\left(\nabla^{\hhh,S}H^\hhh,\nabla^{\hhh,S}\varphi\right)+ \s \varphi J(\vh)H^\hhh\Big)\\
            &\quad+\left(f''(H^\hhh)H^\hhh-2f'(H^\hhh)\right)\left\langle \nabla^{\hhh,S} H^\hhh,\nabla^{\hhh,S}\varphi\right\rangle\\
            &\quad+\varphi f'(H^\hhh)\Big( 2\trace\left(\left(\tilde h^\hhh\right)^3\right)+12\tilde h^\hhh\left(\nabla^{\hhh,S}\alpha,J(\vh)\right)+8\s\alpha+6\tilde h^\hhh(J(\vh),J(\vh))\alpha^2+2 H^\hhh\alpha^2\Big)\\
            &\quad+\varphi \Big ( f(H^\hhh)\left( H^\hhh\right)^2-\left(2f'(H^\hhh) H^\hhh+f(H^\hhh)\right)\left(|\tilde h^\hhh|^2+4J(\vh)\alpha+(2n+2)\alpha^2\right)\Big).
        \end{split}
    \end{equation*}
   Here $\jacobi^\hhh$ and $A^\hhh$ are the horizontal Jacobi operator and the horizontal shape operator (cf. \Cref{subsechypersurf}).
    \end{theorem}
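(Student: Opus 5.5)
The plan is to apply \Cref{maintheoremappendixA} to $(\hh^n,\langle\cdot,\cdot\rangle_\eps)$ for each $\eps>0$ and let $\eps\to0$. Since $\Phi$ is non-characteristic, every integrand is supported in a fixed compact $K\Subset S\setminus S_0$, where all convergences proved above hold uniformly. By \eqref{areaelementeps}, $\eps\,\sigma^\eps=\frac{\sqrt{1+\eps^2\alpha^2}}{\sqrt{1+\alpha^2}}\sigma^1\to\sigma^\hhh$ uniformly on $K$. By \eqref{curvepsecurvh}, $H^\eps_\tau\to H^\hhh_\tau$ smoothly for $\tau$ small, because the hypersurfaces $\Phi_\tau(S)$ stay non-characteristic on $\Phi_\tau(K)$. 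Hence $\eps\,\tmc_f^\eps(\Phi_\tau(S))\to\tmc^\hhh_f(\Phi_\tau(S))$ in $C^2$ in $\tau$. Indeed, the integrands are smooth in $(\tau,\eps)$ up to $\eps=0$: after pulling back by $\Phi_\tau$, they are expressed through $\eps^2$ and quantities that are smooth in $\alpha_\tau$ and its derivatives. Therefore $\delta^k\tmc^\hhh_f=\lim_{\eps\to0}\eps\,\delta^k\tmc^\eps_f$ for $k=1,2$.

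\textbf{First variation.} The Riemannian normal part is $\varphi^\eps=\langle\X,\n^\eps\rangle_\eps=\frac{1}{\sqrt{1+\eps^2\alpha^2}}\langle\X,\vh+\alpha T\rangle$, which converges to $\varphi$. In \eqref{primavarfunzgen}, the term $-\Delta^{\eps,S}f'(H^\eps)-f'(H^\eps)(|h^\eps|^2+\ric^\eps(\n^\eps,\n^\eps))$ equals $\jacobi^\eps f'(H^\eps)$. This converges to $\jacobi^\hhh f'(H^\hhh)$ by \eqref{jacepstojach} and \eqref{convhepstoh}, and \eqref{variazprimasubriemmain} follows.

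\textbf{Second variation.} I will handle the two pieces of \eqref{variazsecondanormalemaarbitrariaperilresto} separately.

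(a) For the first-variation piece, I need the limit of $\langle\Z^\eps,\n^\eps\rangle_\eps-2\X^{T,\eps}\varphi^\eps+h^\eps(\X^{T,\eps},\X^{T,\eps})$. Here $\Z^\eps=\X'+\nabla^\eps_\X\X$. I rewrite $\nabla^\eps_\X\X$ with \eqref{levicivitavspseudohermitian} as $\nabla_\X\X+\frac{2X^{2n+1,\eps}}{\eps}J(\X)$, with no $T$-term since $\langle\X,J\X\rangle=0$. Pairing with $\n^\eps$ then yields $\langle\Z,\vh+\alpha T\rangle$ plus a term $2\alpha\langle\X,T\rangle\langle J\X,\vh\rangle$. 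I decompose $\X^{T,\eps}=\X-\varphi^\eps\n^\eps$, which tends to $\X-\varphi\vh$ up to an $O(\eps^2)$ vertical term; note that $\langle\X,T\rangle$ enters through $\s$. I evaluate $h^\eps(\X^{T,\eps},\X^{T,\eps})$ with \Cref{lemmasecondaformafondriem}. The $\frac1\eps$ entry $h^\eps(J\vh,\s^\eps)$ pairs with the $\s^\eps$-component, which is $O(\eps)$, and so gives a finite contribution. Collecting terms should reproduce $\psi$, including $-\langle\nabla_{\X-\varphi\vh}(\X-\varphi\vh),\vh+\alpha T\rangle$ and $-2\alpha\varphi\langle\X,J\vh\rangle$. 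I will check this bookkeeping against the expression for $\psi$.

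(b) For $\int\varphi^\eps\mathcal L^\eps\varphi^\eps\,d\sigma^\eps$, I go term by term. The term $\jacobi^\eps(f''\jacobi^\eps\varphi^\eps)$ converges by \eqref{jacepstojach}. The cubic-curvature term converges by \eqref{qbqequation}, and the zero-order potential converges by \eqref{convergenzadelpotenzialejacobi}. The terms $\langle\nabla^{\eps,S}H^\eps,\nabla^{\eps,S}\varphi^\eps\rangle_\eps$ converge to their horizontal versions, since the $\s$-parts carry a factor $\eps^2$. The term $-2f''\langle\nabla H^\eps,A^\eps\nabla\varphi^\eps\rangle$ converges by \eqref{convhepsgradeps} to $-2f''(\tilde h^\hhh(\nabla^{\hhh,S}H^\hhh,\nabla^{\hhh,S}\varphi)+J(\vh)H^\hhh\s\varphi+J(\vh)\varphi\s H^\hhh)$.

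\textbf{Main obstacle.} The delicate term is $2f'(H)\divv^S A(\nabla^S\varphi)$. Inside the integral I first integrate by parts, writing $\int 2\varphi f'\divv A(\nabla\varphi)=-2\int h^\eps(\nabla^{\eps,S}(\varphi f'),\nabla^{\eps,S}\varphi)$. This converges by \eqref{convhepsgradeps} and produces $\tilde h^\hhh$-terms together with the mixed terms $J(\vh)\cdot\s$. I then reintegrate these back to $\varphi\cdot(\dots)$ using \Cref{proposizioneibpsubriemsurf}: \eqref{ibpsubriemequation} for horizontal derivatives and \eqref{ibpverticalformula} for $\s$. Along the way I commute $\s J(\vh)$ against $J(\vh)\s$, using the torsion \eqref{pseudotorsion} and \eqref{propvh5}. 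This produces the terms $2\divv^{\hhh,S}(f'A^\hhh\nabla^{\hhh,S}\varphi)$, $4f'\s J(\vh)\varphi$, $4\alpha f'(\tilde h^\hhh(\nabla^{\hhh,S}\varphi,J\vh)-H^\hhh J(\vh)\varphi)$, and the second $f''$ piece $-2f''\s\varphi J(\vh)H^\hhh$. Together with the contribution from the gradient term above, this doubles the coefficient to $-4f''$. I expect the $\alpha$-weighted terms coming from the modified divergence, together with the $\s$-integration by parts weight $H^\hhh\alpha$, to be where errors are most likely. I will verify the final expression by checking that it is symmetric in $\varphi$, as self-adjointness of $\mathcal L^\hhh$ requires.
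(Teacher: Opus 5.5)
Your overall scheme (apply the Riemannian formulas for $\langle\cdot,\cdot\rangle_\eps$, multiply by $\eps$, let $\eps\to0$, then re-integrate by parts with \Cref{proposizioneibpsubriemsurf}) is the paper's. Your first variation and your part (b) essentially follow its argument. The commutation of $\s$ with $J(\vh)$ is unnecessary: applying \eqref{ibpverticalformula} to $-4\int_S f'(H^\hhh)J(\vh)\varphi\,\s\varphi\,d\sigma^\hhh$ directly yields the term $4f'(H^\hhh)\s J(\vh)\varphi$ of $\mathcal L^\hhh$.

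The gap is in part (a). Both of your scaling claims there are false, and they hide the cancellation that makes the limit exist. Put $a=\langle\X,T\rangle$. Since $\langle\X,\eps T\rangle_\eps=a/\eps$, \eqref{levicivitavspseudohermitian} gives $\nabla^\eps_\X\X=\nabla_\X\X+2a\eps^{-2}J(\X)$, hence
\[
\left\langle\Z^\eps,\n^\eps\right\rangle_\eps=\frac{\left\langle\Z,\vh+\alpha T\right\rangle+2a\eps^{-2}\left\langle J(\X),\vh\right\rangle}{\sqrt{1+\eps^2\alpha^2}}.
\]
This blows up like $\eps^{-2}$; it is not $\langle\Z,\vh+\alpha T\rangle$ plus a bounded term $2\alpha a\langle J(\X),\vh\rangle$. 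Likewise, $\X-\varphi\vh$ is a section of $\hhh TS$ plus $a\s$, and $\s=\eps^{-1}\sqrt{1+\eps^2\alpha^2}\,\s^\eps$. So the $\s^\eps$-component of $\X^{T,\eps}$ is $\langle\X,\s^\eps\rangle_\eps=(a/\eps-\eps\alpha\langle\X,\vh\rangle)/\sqrt{1+\eps^2\alpha^2}$, which is of order $\eps^{-1}$, not $\eps$. Through \eqref{formaepsenonepstuttoorizz}, the entry $h^\eps(J(\vh),\s^\eps)\sim\eps^{-1}$ therefore contributes $2a\eps^{-2}\langle\X,J(\vh)\rangle+O(1)$ to $h^\eps(\X^{T,\eps},\X^{T,\eps})$. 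This cancels the divergence above only because $\langle J(\X),\vh\rangle=-\langle\X,J(\vh)\rangle$. Moreover, the entries $h^\eps(\C,\s^\eps)$ and $h^\eps(\s^\eps,\s^\eps)$, which your scaling makes negligible, give finite contributions (for instance $a^2\s\alpha$ in the limit). With the scalings you state, you would keep a spurious finite term and drop these, so the collected terms cannot match $\psi$.

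To repair (a) along your route, you must use the correct orders and exhibit the $\eps^{-2}$ cancellation explicitly. You then still have to convert the resulting limit, which is expressed through $\tilde h^\hhh$ and derivatives of $\alpha$, into the connection term of $\psi$. This needs the identity $\langle\B,\nabla_\s\vh\rangle=\B\alpha+2\alpha^2\langle\B,J(\vh)\rangle$ for $\B\in\hhh TS$, which follows from \eqref{propvh5} and $T\vh^i=Z_i\alpha$.

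The paper avoids \Cref{lemmasecondaformafondriem} at this step. It writes $h^\eps(\X^{T,\eps},\X^{T,\eps})=-\langle\nabla^\eps_{\X^{T,\eps}}\X^{T,\eps},\n^\eps\rangle_\eps$ and expands both this and $\nabla^\eps_\X\X$ by \eqref{levicivitavspseudohermitian}. Since $\langle J(\X^{T,\eps}),\n^\eps\rangle_\eps=\langle J(\X),\n^\eps\rangle_\eps$, the two singular terms combine into
\[
\frac{2}{\eps}\left\langle J(\X),\n^\eps\right\rangle_\eps\left\langle\X-\X^{T,\eps},\eps T\right\rangle_\eps=\frac{2\alpha\varphi^\eps}{\sqrt{1+\eps^2\alpha^2}}\left\langle J(\X),\n^\eps\right\rangle_\eps .
\]
This is bounded because $\X-\X^{T,\eps}=\varphi^\eps\n^\eps$ has only an $O(\eps)$ component along $\eps T$. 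The remaining term $-\langle\nabla_{\X^{T,\eps}}\X^{T,\eps},\n^\eps\rangle_\eps$ converges to $-\langle\nabla_{\X-\varphi\vh}(\X-\varphi\vh),\vh+\alpha T\rangle$. So $\psi$, including the term $-2\alpha\varphi\langle\X,J(\vh)\rangle$, comes out directly.
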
 
    \begin{proof}
        Fix $\eps>0$. 
 Set $   \tmc^\eps_f(S)\coloneqq\int_S f\left(H^\eps\right)\,d\sigma^\eps. $
Then \begin{equation}\label{confrontoduefunzionali}
       \begin{split}
           \tmc^\eps_f(S)\overset{\eqref{areaelementeps},\eqref{curvepsecurvh}}&{=}\int_S f\left(\frac{H^\hhh}{\sqrt{1+\eps^2\alpha^2}}+\frac{\eps^2\s\alpha}{\left(1+\eps^2\alpha^2\right)^{\frac{3}{2}}}\right)\frac{\sqrt{1+\eps^2\alpha^2}}{\eps\sqrt{1+\alpha^2}}\,d\sigma^1,\\
       \tmc^\hhh_f(S)\overset{\eqref{areaelementorizz}}&{=}\int_S f\left(H^\hhh\right)\frac{1}{\sqrt{1+\alpha^2}}\,d\sigma^1.\\
       \end{split}
\end{equation}
Therefore, by \eqref{confrontoduefunzionali} and since $\Phi$ is supported on $S\setminus S_0$, we infer that 
\begin{equation}\label{convergenzadellevariazioniprime}
    \delta\tmc^\hhh_f(S)[\Phi]=\lim_{\eps\to 0}\eps  \delta\tmc^\eps_f(S)[\Phi],\qquad \delta^2\tmc^\hhh_f(S)[\Phi]=\lim_{\eps\to 0}\eps  \delta^2\tmc^\eps_f(S)[\Phi].
\end{equation}
      Set $\varphi^\eps=\left\langle\X,\n^\eps\right\rangle_\eps$. Then, by \eqref{primavarfunzgen},
\begin{equation*}
      \eps\delta\tmc_f^\eps(S)[\Phi]=  \int_S\Big(f'(H^\eps)\left(-\Delta^{\eps,S}\varphi^\eps-\varphi^\eps\left(|h^\eps|^2+\ric^\eps(\n^\eps,\n^\eps)\right)\right)+\varphi^\eps f(H^\eps) H^\eps\Big)\frac{\sqrt{1+\eps^2\alpha^2}}{\sqrt{1+\alpha^2}}\,d\sigma^1.
    \end{equation*}
    Notice that 
    \begin{equation*}
        \varphi^\eps=\left\langle\X,\n^\eps\right\rangle=\frac{\left\langle \X,\vh\right\rangle}{\sqrt{1+\eps^2\alpha^2}}+\frac{\eps\alpha\left\langle \X,\eps T\right\rangle_\eps}{\sqrt{1+\eps^2\alpha^2}}=\frac{\left\langle \X,\vh\right\rangle}{\sqrt{1+\eps^2\alpha^2}}+\frac{\alpha\left\langle \X, T\right\rangle_1}{\sqrt{1+\eps^2\alpha^2}}=\frac{\varphi}{\sqrt{1+\eps^2\alpha^2}}.
    \end{equation*}
    Therefore, \eqref{laplaepsilonespresso} implies that 
    \begin{equation*}
       \varphi^\eps\xrightarrow[\eps \to 0]{}\varphi,\qquad \Delta^{\eps,S}\varphi^\eps\xrightarrow[\eps\to 0]{}\hat\Delta^{\hhh,S}\varphi\qquad\text{uniformly on $S$}.
    \end{equation*}
    Then, by \eqref{convergenzadellevariazioniprime} and recalling \eqref{convhepstoh} and \eqref{convergenzadelpotenzialejacobi}, \eqref{variazprimasubriemmain} follows. Next, we prove \eqref{variazionesecondasubriemannianamain}. Set $$\X^{T,\eps}=\X-\varphi^\eps\n^\eps=\X-\frac{\varphi}{1+\eps^2\alpha^2}\left(\vh+\eps^2\alpha T\right),$$ and denote by $\Z^\eps$ the acceleration of $\Phi$ with respect to $\nabla^\eps$.
 By \eqref{variazsecondanormalemaarbitrariaperilresto}, 
   \begin{equation*}
        \begin{split}
            \delta^2\tmc^\eps_f(S)&[\Phi]=\delta\tmc^\eps_f(S)\left[\left\langle\Z|_S,\n^\eps\right\rangle_\eps-2\X^{T,\eps}\varphi^\eps +h^\eps\left(\X^{T,\eps},\X^{T,\eps}\right)\right]+\int_S\varphi^\eps\,\jacobi^\eps\left(f''(H^\eps)\jacobi^\eps\varphi^\eps\right)\,d\sigma^\eps\\
            &\quad+\int_S2\varphi^\eps \divv^{\eps,S} \left(f'(H^\eps)A^\eps\left(\nabla^{\eps,S}\varphi^\eps\right)\right)\,d\sigma^\eps-\int_S\varphi^\eps\left(f'(H^\eps)H^\eps+f(H^\eps)\right)\Delta^{\eps,S}\varphi^\eps\,d\sigma^\eps\\
            &\quad-\int_S4\varphi ^\eps f''(H^\eps)h^\eps\left(\nabla^{\eps,S} H^\eps,\nabla^{\eps,S}\varphi^\eps\right)\,d\sigma^\eps+\int_S\varphi^\eps\left(f''(H^\eps)H^\eps-2f'(H^\eps)\right)\left\langle \nabla^{\eps,S} H^\eps,\nabla^{\eps,S}\varphi^\eps\right\rangle_\eps\,d\sigma^\eps\\
            &\quad+\int_S\left(\varphi ^\eps\right)^2f'(H^\eps)\Big(2\trace\left(\left(h^\eps\right)^3\right)-2\left\langle h^\eps,\j^\eps(\n^\eps,\n^\eps)\right\rangle-\left(\nabla^\eps_{\n^\eps} \ric^\eps\right)(\n^\eps,\n^\eps)\Big)\,d\sigma^\eps\\
            &\quad+\int_S\left(\varphi^\eps\right)^2 \Big ( f(H^\eps)\left( H^\eps\right)^2-\left(2f'(H^\eps) H^\eps+f(H^\eps)\right)\left(|h^\eps|^2+ \ric^\eps(\n^\eps,\n^\eps)\right)\Big)\,d\sigma ^\eps.
        \end{split}
    \end{equation*}
    Denote the terms on the right-hand side by $\mathrm{I^\eps},\ldots,\mathrm{VIII^\eps}$. First, 
    \begin{equation*}
        \left\langle\Z^\eps,\n^\eps\right\rangle_\eps=\frac{\left\langle\X',\vh+\alpha T\right\rangle_1}{\sqrt{1+\eps^2\alpha^2}}+\left\langle \nabla^\eps_\X\X,\n^\eps\right\rangle_\eps\overset{\eqref{levicivitavspseudohermitian}}{=}\frac{\left\langle\Z,\vh+\alpha T\right\rangle_1}{\sqrt{1+\eps^2\alpha^2}}+\frac{2\left\langle\X,\eps T\right\rangle_\eps}{\eps}\left\langle J(\X),\n^\eps\right\rangle_\eps.
    \end{equation*}
    Moreover, $\X-\varphi\vh\in\Gamma(TS)$, and 
    \begin{equation*}
        -2\X^{T,\eps}\varphi^\eps=-2\left(\X-\frac{\varphi}{1+\eps^2\alpha^2}\left(\vh+\eps^2\alpha T\right)\right)\left(\frac{\varphi}{\sqrt{1+\eps^2\alpha^2}}\right)\xrightarrow[\eps\to 0]{}-2\left(\X-\varphi\vh\right)\varphi\qquad\text{uniformly on $S$}.
    \end{equation*}
    In addition,
    \begin{equation*}
        h^\eps\left(\X^{T,\eps},\X^{T,\eps}\right)=-\left\langle\nabla^\eps_{\X^{T,\eps}}\X^{T,\eps},\n^\eps\right\rangle_\eps\overset{\eqref{levicivitavspseudohermitian}}{=}-\left\langle\nabla_{\X^{T,\eps}}\X^{T,\eps},\n^\eps\right\rangle_\eps-\frac{2\left\langle\X^{T,\eps},\eps T\right\rangle_\eps}{\eps}\left\langle J(\X),\n^\eps\right\rangle_\eps.
    \end{equation*}
Therefore
\begin{equation*}
    \begin{split}
        \left\langle\Z^\eps,\n^\eps\right\rangle_\eps+h^\eps\left(\X^{T,\eps},\X^{T,\eps}\right)&=\frac{\left\langle\Z,\vh+\alpha T\right\rangle_1}{\sqrt{1+\eps^2\alpha^2}}-\left\langle\nabla_{\X^{T,\eps}}\X^{T,\eps},\n^\eps\right\rangle_\eps+\frac{2\left\langle J(\X),\n^\eps\right\rangle_\eps}{\eps}\left\langle\X-\X^{T,\eps},\eps T\right\rangle_\eps\\
        &=\frac{\left\langle\Z,\vh+\alpha T\right\rangle_1}{\sqrt{1+\eps^2\alpha^2}}-\left\langle\nabla_{\X^{T,\eps}}\X^{T,\eps},\n^\eps\right\rangle_\eps+\frac{2\alpha\varphi_\epsilon\left\langle J(\X),\n^\eps\right\rangle_\eps}{\sqrt{1+\eps^2\alpha^2}}\\
        &\xrightarrow[\eps\to 0]{}\left\langle\Z,\vh+\alpha T\right\rangle_1-\Big\langle\nabla_{\X-\varphi\vh}\left(\X-\varphi\vh\right),\vh+\alpha T\Big\rangle_1-2\alpha\varphi\left\langle \X,J(\vh)\right\rangle
    \end{split}
\end{equation*}
uniformly on $S$. By the first part of the proof, we conclude that 
    \begin{equation*}
        \eps\mathrm{I^\eps}\xrightarrow[\eps\to 0]{}\delta\tmc^\hhh_f(S)\left[\left\langle\Z,\vh+\alpha T\right\rangle_1-2\left(\X-\varphi\vh\right)\varphi-\Big\langle\nabla_{\X-\varphi\vh}\left(\X-\varphi\vh\right),\vh+\alpha T\Big\rangle_1-2\alpha\varphi\left\langle \X,J(\vh)\right\rangle\right].
    \end{equation*}
    Moreover, by \eqref{jacepstojach},
    \begin{equation*}
        \eps\mathrm{II^\eps}=\eps \int_Sf''(H^\eps)\jacobi^\eps\varphi^\eps\,\jacobi^\eps\varphi^\eps\,d\sigma^\eps\xrightarrow[\eps\to 0]{}\int_Sf''(H^\hhh)\jacobi^\hhh\varphi\,\jacobi^\hhh\varphi\,d\sigma^\hhh=\int_S\varphi\,\jacobi^\hhh\left(f''(H^\hhh)\jacobi^\hhh\varphi\right)\,d\sigma^\hhh.
    \end{equation*}
    Next,
    \begin{equation*}
        \begin{split}
            \eps\mathrm{III^\eps}&=-\eps\int_S 2 f'(H^\eps)h^\eps\left(\nabla^{\eps,S}\varphi^\eps,\nabla^{\eps,S}\varphi^\eps\right)\,d\sigma^\eps\\
            \overset{\eqref{convhepsgradeps}}&{\xrightarrow[\eps\to 0]{}}-\int_S 2f'(H^\hhh)\Big(\tilde h^\hhh\left(\nabla^{\hhh,S}\varphi,\nabla^{\hhh,S}\varphi\right)+ 2J(\vh)\varphi\s\varphi\Big)\,d\sigma^\hhh\\
            \overset{\eqref{rapptrahetildacca2026}}&{=}
            -\int_S 2 \left\langle f'(H^\hhh)A^\hhh\left(\nabla^{\hhh,S}\varphi\right),\nabla^{\hhh,S}\varphi\right\rangle\,d\sigma^\hhh -\int_S 4f'(H^\hhh )J(\vh)\varphi\s\varphi\,d\sigma^\hhh\\
            \overset{\eqref{ibpsubriemequation},\eqref{ibpverticalformula}}&{=} \int_S 2\varphi\divv^{\hhh,S} \left\langle f'(H^\hhh)A^\hhh\left(\nabla^{\hhh,S}\varphi\right)\right)\,d\sigma^\hhh+\int_S4\varphi\alpha f'(H^\hhh)h^\hhh\left(\nabla^{\hhh,S}\varphi,J(\vh)\right)\,d\sigma^\hhh\\
            &\quad+\int_S 4\varphi J(\vh)\varphi f''(H^\hhh)\s H^\hhh\,d\sigma^\hhh+\int_S 4\varphi f'(H^\hhh)\s J(\vh)\varphi,d\sigma^\hhh-\int_S4\varphi\alpha f'(H^\hhh)H^\hhh J(\vh)\varphi\,d\sigma^\hhh\\
            \overset{\eqref{rapptrahetildacca2026}}&{=}\int_S 2\varphi\divv^{\hhh,S} \left\langle f'(H^\hhh)A^\hhh\left(\nabla^{\hhh,S}\varphi\right)\right)\,d\sigma^\hhh+\int_S4\varphi\alpha f'(H^\hhh)\Big(\tilde h^\hhh\left(\nabla^{\hhh,S}\varphi,J(\vh)\right)-H^\hhh J(\vh)\varphi\Big)\,d\sigma^\hhh\\
            &\quad+\int_S 4\varphi J(\vh)\varphi f''(H^\hhh)\s H^\hhh\,d\sigma^\hhh+\int_S 4\varphi f'(H^\hhh)\s J(\vh)\varphi,d\sigma^\hhh.
        \end{split}
    \end{equation*}
    Moreover,
    \begin{equation*}
        \eps\mathrm{IV}^\eps\xrightarrow[\eps\to 0]{}-\int_S\varphi\left(f'(H^\hhh)H^\hhh+f(H^\hhh)\right)\hat\Delta^{\hhh,S}\varphi\,d\sigma^\hhh.
    \end{equation*}
    In addition,
    \begin{equation*}
       \eps \mathrm{V}^\eps   \overset{\eqref{convhepsgradeps}}{\xrightarrow[\eps\to 0]{}}-\int_S 4\varphi f''(H^\hhh)\Big(\tilde h^\hhh\left(\nabla^{\hhh,S}H^\hhh,\nabla^{\hhh,S}\varphi\right)+ J(\vh)\varphi\s H^\hhh+ \s \varphi J(\vh)H^\hhh\Big)\,d\sigma^\hhh.
    \end{equation*}
    Moreover, 
    \begin{equation*}
        \eps\mathrm{VI^\eps}\xrightarrow[\eps\to 0]{}\int_S\varphi\left(f''(H^\hhh)H^\hhh-2f'(H^\hhh)\right)\left\langle \nabla^{\hhh,S} H^\hhh,\nabla^{\hhh,S}\varphi\right\rangle\,d\sigma^\hhh.
    \end{equation*}
    Furthermore, by \eqref{qbqequation},
    \begin{equation*}
        \eps\mathrm{VII}^\eps\xrightarrow[\eps\to 0]{}\int_S\varphi^2 f'(H^\hhh)\Big( 2\trace\left(\left(\tilde h^\hhh\right)^3\right)+12\tilde h^\hhh\left(\nabla^{\hhh,S}\alpha,J(\vh)\right)+8\s\alpha+6\tilde h^\hhh(J(\vh),J(\vh))\alpha^2+2 H^\hhh\alpha^2\Big)\,d\sigma^\hhh.
    \end{equation*}
    Finally, by \eqref{convergenzadelpotenzialejacobi},
    \begin{equation*}
        \eps\mathrm{VIII}^\eps\xrightarrow[\eps\to 0]{}\int_S\varphi^2 \Big ( f(H^\hhh)\left( H^\hhh\right)^2-\left(2f'(H^\hhh) H^\hhh+f(H^\hhh)\right)\left(|\tilde h^\hhh|^2+4J(\vh)\alpha+(2n+2)\alpha^2\right)\Big)\,d\sigma^\hhh.
    \end{equation*}
  The thesis follows combining the above computations.
    \end{proof}
\section*{Declarations}
\footnotesize{
%
\noindent{

\noindent{\textbf{Conflict of interest.}} The author has no financial interests or conflicts of interest related to the subject matter.
\smallskip

\noindent \textbf{Data availability statement.} Data sharing not applicable as no datasets were generated or analyzed during the current study.
\bibliographystyle{abbrvmr_macro}
\bibliography{biblio}

\begin{thebibliography}{10}

\bibitem{MR4037467}
V.~Agostiniani and L.~Mazzieri.
\newblock Monotonicity formulas in potential theory.
\newblock {\em Calc. Var. Partial Differential Equations}, 59(1):Paper No. 6,
  32, 2020.
\newblock \MR{4037467}.

\bibitem{MR3971262}
A.~Agrachev, D.~Barilari, and U.~Boscain.
\newblock {\em A comprehensive introduction to sub-{R}iemannian geometry},
  volume 181 of {\em Cambridge Studies in Advanced Mathematics}.
\newblock Cambridge University Press, Cambridge, 2020.
\newblock \MR{3971262}.

\bibitem{MR86338}
A.~D. Aleksandrov.
\newblock Uniqueness theorems for surfaces in the large. {I}.
\newblock {\em Vestnik Leningrad. Univ.}, 11(19):5--17, 1956.
\newblock \MR{86338}.

\bibitem{MR2021034}
Z.~M. Balogh.
\newblock Size of characteristic sets and functions with prescribed gradient.
\newblock {\em J. Reine Angew. Math.}, 564:63--83, 2003.
\newblock \MR{2021034}.

\bibitem{MR0731682}
J.~L. Barbosa and M.~do~Carmo.
\newblock Stability of hypersurfaces with constant mean curvature.
\newblock {\em Math. Z.}, 185(3):339--353, 1984.
\newblock \MR{731682}.

\bibitem{MR0917854}
J.~L. Barbosa, M.~do~Carmo, and J.~Eschenburg.
\newblock Stability of hypersurfaces of constant mean curvature in {R}iemannian
  manifolds.
\newblock {\em Math. Z.}, 197(1):123--138, 1988.
\newblock \MR{917854}.

\bibitem{MR2363343}
A.~Bonfiglioli, E.~Lanconelli, and F.~Uguzzoni.
\newblock {\em Stratified {L}ie groups and potential theory for their
  sub-{L}aplacians}.
\newblock Springer Monographs in Mathematics. Springer, Berlin, 2007.
\newblock \MR{2363343}.

\bibitem{MR2774306}
L.~Capogna, G.~Citti, and M.~Manfredini.
\newblock Smoothness of {L}ipschitz minimal intrinsic graphs in {H}eisenberg
  groups {$\Bbb H^n$}, {$n>1$}.
\newblock {\em J. Reine Angew. Math.}, 648:75--110, 2010.
\newblock \MR{2774306}.

\bibitem{MR4941125}
C.~Cederbaum and A.~Miehe.
\newblock A new proof of the {W}illmore inequality via a divergence inequality.
\newblock {\em Trans. Amer. Math. Soc.}, 378(9):6655--6676, 2025.
\newblock \MR{4941125}.

\bibitem{MR291994}
B.-y. Chen.
\newblock On a theorem of {F}enchel-{B}orsuk-{W}illmore-{C}hern-{L}ashof.
\newblock {\em Math. Ann.}, 194:19--26, 1971.
\newblock \MR{291994}.

\bibitem{MR278240}
B.-y. Chen.
\newblock On the total curvature of immersed manifolds. {I}. {A}n inequality of
  {F}enchel-{B}orsuk-{W}illmore.
\newblock {\em Amer. J. Math.}, 93:148--162, 1971.
\newblock \MR{278240}.

\bibitem{MR2165405}
J.-H. Cheng, J.-F. Hwang, A.~Malchiodi, and P.~Yang.
\newblock Minimal surfaces in pseudohermitian geometry.
\newblock {\em Ann. Sc. Norm. Super. Pisa Cl. Sci. (5)}, 4(1):129--177, 2005.
\newblock \MR{2165405}.

\bibitem{MR2262784}
J.-H. Cheng, J.-F. Hwang, and P.~Yang.
\newblock Existence and uniqueness for {$p$}-area minimizers in the
  {H}eisenberg group.
\newblock {\em Math. Ann.}, 337(2):253--293, 2007.
\newblock \MR{2262784}.

\bibitem{MR3544938}
J.~Dalphin, A.~Henrot, S.~Masnou, and T.~Takahashi.
\newblock On the minimization of total mean curvature.
\newblock {\em J. Geom. Anal.}, 26(4):2729--2750, 2016.
\newblock \MR{3544938}.

\bibitem{MR2354992}
D.~Danielli, N.~Garofalo, and D.~M. Nhieu.
\newblock Sub-{R}iemannian calculus on hypersurfaces in {C}arnot groups.
\newblock {\em Adv. Math.}, 215(1):292--378, 2007.
\newblock \MR{2354992}.

\bibitem{MR3921314}
M.~G. Delgadino and F.~Maggi.
\newblock Alexandrov's theorem revisited.
\newblock {\em Anal. PDE}, 12(6):1613--1642, 2019.
\newblock \MR{3921314}.

\bibitem{MR1138207}
M.~P. do~Carmo.
\newblock {\em Riemannian geometry}.
\newblock Mathematics: Theory \& Applications. Birkh\"{a}user Boston, Inc.,
  Boston, MA, portuguese edition, 1992.
\newblock \MR{1138207}.

\bibitem{MR3666332}
C.~M. Elliott, H.~Fritz, and G.~Hobbs.
\newblock Small deformations of {H}elfrich energy minimising surfaces with
  applications to biomembranes.
\newblock {\em Math. Models Methods Appl. Sci.}, 27(8):1547--1586, 2017.
\newblock \MR{3666332}.

\bibitem{MR2723818}
N.~Garofalo and C.~Selby.
\newblock Collapsing {R}iemannian metrics to sub-{R}iemannian and the geometry
  of hypersurfaces in {C}arnot groups.
\newblock In {\em Around the research of {V}ladimir {M}az'ya. {I}}, volume~11
  of {\em Int. Math. Ser. (N. Y.)}, pages 169--206. Springer, New York, 2010.
\newblock \MR{2723818}.

\bibitem{MR4761954}
G.~Giovannardi, A.~Pinamonti, J.~Pozuelo, and S.~Verzellesi.
\newblock The prescribed mean curvature equation for {$t$}-graphs in the
  sub-{F}insler {H}eisenberg group {$\Bbb{H}^n$}.
\newblock {\em Adv. Math.}, 451:Paper No. 109788, 43, 2024.
\newblock \MR{4761954}.

\bibitem{simons}
G.~Giovannardi, A.~Pinamonti, and S.~Verzellesi.
\newblock Curvature estimates for minimal hypersurfaces in the heisenberg
  group.
\newblock preprint, \url{https://doi.org/10.48550/arXiv.2409.20359}, 2024.

\bibitem{MR3962031}
A.~Gruber, M.~Toda, and H.~Tran.
\newblock On the variation of curvature functionals in a space form with
  application to a generalized {W}illmore energy.
\newblock {\em Ann. Global Anal. Geom.}, 56(1):147--165, 2019.
\newblock \MR{3962031}.

\bibitem{MR2522433}
P.~Guan and J.~Li.
\newblock The quermassintegral inequalities for {$k$}-convex starshaped
  domains.
\newblock {\em Adv. Math.}, 221(5):1725--1732, 2009.
\newblock \MR{2522433}.

\bibitem{MR533065}
E.~Heintze and H.~Karcher.
\newblock A general comparison theorem with applications to volume estimates
  for submanifolds.
\newblock {\em Ann. Sci. \'Ecole Norm. Sup. (4)}, 11(4):451--470, 1978.
\newblock \MR{533065}.

\bibitem{MR2262196}
V.~Magnani.
\newblock Characteristic points, rectifiability and perimeter measure on
  stratified groups.
\newblock {\em J. Eur. Math. Soc. (JEMS)}, 8(4):585--609, 2006.
\newblock \MR{2262196}.

\bibitem{MR1511220}
H.~Minkowski.
\newblock Volumen und {O}berfl\"ache.
\newblock {\em Math. Ann.}, 57(4):447--495, 1903.
\newblock \MR{1511220}.

\bibitem{MR1173047}
S.~Montiel and A.~Ros.
\newblock Compact hypersurfaces: the {A}lexandrov theorem for higher order mean
  curvatures.
\newblock In {\em Differential geometry}, volume~52 of {\em Pitman Monogr.
  Surveys Pure Appl. Math.}, pages 279--296. Longman Sci. Tech., Harlow, 1991.
\newblock \MR{1173047}.

\bibitem{MR2043961}
S.~D. Pauls.
\newblock Minimal surfaces in the {H}eisenberg group.
\newblock {\em Geom. Dedicata}, 104:201--231, 2004.
\newblock \MR{2043961}.

\bibitem{MR4923606}
A.~Pinamonti and S.~Verzellesi.
\newblock A characterization of horizontally totally geodesic hypersurfaces in
  {H}eisenberg groups.
\newblock {\em J. Geom. Anal.}, 35(8):Paper No. 235, 35, 2025.
\newblock \MR{4923606}.

\bibitem{MR4927775}
T.~Pires and W.~Santos.
\newblock On the variation of r-mean curvature functionals and application to
  the {$L^2$}-norm of the traceless second fundamental form.
\newblock {\em Ann. Global Anal. Geom.}, 68(2):Paper No. 5, 23, 2025.
\newblock \MR{4927775}.

\bibitem{MR5029815}
J.~Pozuelo and S.~Verzellesi.
\newblock Existence and uniqueness of {$t$}-graphs of prescribed mean curvature
  in {H}eisenberg groups.
\newblock {\em Trans. Amer. Math. Soc.}, 379(3):1883--1929, 2026.
\newblock \MR{5029815}.

\bibitem{MR341351}
R.~C. Reilly.
\newblock Variational properties of functions of the mean curvatures for
  hypersurfaces in space forms.
\newblock {\em J. Differential Geometry}, 8:465--477, 1973.
\newblock \MR{341351}.

\bibitem{MR4193432}
M.~Ritor\'{e}.
\newblock Tubular neighborhoods in the sub-{R}iemannian {H}eisenberg groups.
\newblock {\em Adv. Calc. Var.}, 14(1):1--36, 2021.
\newblock \MR{4193432}.

\bibitem{MR4676392}
M.~Ritor\'e.
\newblock {\em Isoperimetric inequalities in {R}iemannian manifolds}, volume
  348 of {\em Progress in Mathematics}.
\newblock Birkh\"auser/Springer, Cham, [2023] \copyright 2023.
\newblock \MR{4676392}.

\bibitem{MR996826}
A.~Ros.
\newblock Compact hypersurfaces with constant higher order mean curvatures.
\newblock {\em Rev. Mat. Iberoamericana}, 3(3-4):447--453, 1987.
\newblock \MR{996826}.

\bibitem{MR3702549}
C.~Xia and X.~Zhang.
\newblock A{BP} estimate and geometric inequalities.
\newblock {\em Comm. Anal. Geom.}, 25(3):685--708, 2017.
\newblock \MR{3702549}.

\end{thebibliography}
\end{document}